\documentclass[11pt]{article}

\usepackage{amsmath, amsfonts,amssymb, amsthm}
\usepackage{mathtools}
\usepackage{multirow}
\usepackage{color}
\usepackage[table]{xcolor}
\usepackage{epsf}
\usepackage{tikz}
\usepackage{hyperref}
\hypersetup{
    colorlinks = true,
    linkcolor = {black},
    citecolor = {black}
}

\usepackage[left=1.17in, right=1.17in, top=1.11in, bottom=1.19in]{geometry}

\renewenvironment{thebibliography}[1]{
  \begin{oldthebibliography}{#1}
    \setlength{\itemsep}{.5em}
    \setlength{\parskip}{0em}
}
{
  \end{oldthebibliography}
}

\usepackage{graphicx}
\usepackage{caption}
\usepackage{subcaption}

\usepackage{enumitem}
\setlist{itemsep=0pt, topsep=0pt}

\newcommand{\floor}[1]{\lfloor#1\rfloor}
\newcommand{\ceiling}[1]{\lceil#1\rceil}

\newcommand{\ex}{\mathrm{ex}}

\newcommand{\tbf}[1]{\textbf{#1}}

\newtheorem{theorem}{Theorem}[section]
\newtheorem{corollary}[theorem]{Corollary}
\newtheorem{lemma}[theorem]{Lemma}

\newtheorem{claim}[theorem]{Claim}

\newtheorem{observation}[theorem]{Observation}

\newtheorem{example}[theorem]{Example}
\newtheorem{problem}[theorem]{Problem}

\newtheorem{remark}[theorem]{Remark}

\newenvironment{proofclaim}[1][Proof of claim]{\begin{proof}[#1]}{\end{proof}}

\newcommand{\ep}{\epsilon}

\usepackage{color}

\title{On the Ramsey numbers of fans and stars}
\author{Louis DeBiasio\thanks{Department of Mathematics, Miami University, Oxford, OH. \texttt{debiasld@miamioh.edu}. Research supported in part by NSF grant DMS-1954170 and AMS-Simons Research Enhancement Grants for PUI Faculty GR001015.}, Tucker Wimbish\thanks{Department of Mathematics, Miami University, Oxford, OH. \texttt{wimbistj@miamioh.edu}.}}
\date{\today}

\begin{document}

\maketitle

\begin{abstract}
Let $F_n$ be the graph on $2n+1$ vertices consisting of $n$ triangles meeting at a single vertex.  After a number of improvements over the years, it is currently known that the Ramsey number of $F_n$ is between $4.5n-5$ (Chen, Yu, Zhao \cite{CYZ}) and $(5+\frac{1}{6})n+O(1)$ (Dvo{\v{r}}{\'a}k and Metrebian \cite{DM}).  We improve both of these bounds as follows
$$4.732n\approx (3+\sqrt{3})n-8< R(F_n)\leq (5+o(1))n.$$

Additionally, as it relates to the lower bound on $R(F_n)$ (and for which nothing was known when $n< m< n(n-1)$), we determine the Ramsey numbers of stars vs.~fans, within a constant, as follows
$$R(K_{1,m}, F_{n})= 
    \begin{cases} 
     m+2n-\frac{1+(-1)^{m}}{2}, & m\leq n \\
     \frac{3m+\sqrt{m^2+8n^2}}{2}+\Theta(1), & m>n
   \end{cases}.
$$
In particular, we have $R(K_{1,2n}, F_n)=(3+\sqrt{3})n+\Theta(1)$.
\end{abstract}

\section{Introduction}

\subsection{Ramsey}

The Ramsey number of a graph $H$, denoted $R(H)$ is the smallest $N$ such that in every $2$-coloring of the edges of $K_N$ there is a monochromatic copy of $H$.  Given graphs $H_1$ and $H_2$ we write $R(H_1, H_2)$ to be the smallest $N$ such that in every $2$-coloring of the edges of $K_N$ there exists a copy of $H_1$ in color $1$ or a copy of $H_2$ in color $2$.  

The $n$-fan, $F_n$, (also known as the ``$n$-friendship graph''\footnote{Erd\H{o}s, R\'enyi, and S\'os \cite{ERS} proved that the only graphs with the property that every pair of vertices have exactly one common neighbor are the graphs $F_n$.  This result is known as ``the friendship theorem."}) is the graph on $2n+1$ vertices consisting of $n$ triangles meeting at a single vertex; in other words, a vertex joined to a matching of size $n$. The book with $n$ pages, $B_n$, is the graph on $n+2$ vertices consisting of $n$ triangles meeting at a single edge.  Given a graph $H$ and a positive integer $n$, we write $nH$ to be the vertex disjoint union of $n$ copies of $H$.

Burr, Erd\H{o}s, and Spencer \cite{BES} proved that $R(nK_3)=5n$ for all $n\geq 2$, and Rousseau and Sheehan \cite{RS1} proved that $R(B_n)\leq 4n+2$ for all $n\geq 1$ and that this bound is tight for infinitely many $n$ (in particular whenever a self-complementary strongly-regular graph on $4n+1$ vertices exists).  However, as evidenced by the number of partial results and the large gap between the best known upper and lower bounds, the problem of determining $R(F_n)$ appears to be much harder.  

Li and Rousseau \cite{LiR} first proved that $4n+1\leq R(F_n)\leq 8n-2$ where the lower bound comes from the fact that $F_n$ has $2n+1$ vertices and chromatic number 3 (take two disjoint red cliques on $2n$ vertices with all blue edges between them).  13 years later, Lin and Li \cite{LL} improved the upper bound by showing that $R(F_n)\leq 2\cdot R(nK_2, F_n)=6n$.  Another 13 years later, Chen, Yu, and Zhao \cite{CYZ}, improved both the upper and lower bound by showing $4.5n-5\leq R(F_n)\leq 5.5n+O(1)$.  Note that both of the improvements in \cite{CYZ} required going beyond the ``basic'' bounds of $(|V(F_n)|-1)(\chi(F_n)-1)+1\leq R(F_n)\leq 2\cdot R(nK_2, F_n)$.  Finally, Dvo{\v{r}}{\'a}k and Metrebian \cite{DM} refined the methods of Chen, Yu, and Zhao to improve the upper bound to $R(F_n)\leq (5+\frac{1}{6})n+O(1)$.

We prove the following.
\begin{theorem}\label{thm:main}
For all $\ep>0$ and all $n\geq \frac{384}{\ep^2}$, $$(3+\sqrt{3})n-8< R(F_n)\leq (5+\ep)n.$$
\end{theorem}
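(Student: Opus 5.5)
The two bounds are proved separately.

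\textbf{Lower bound.} We use the abstract's statement that $R(K_{1,2n},F_n)=(3+\sqrt{3})n+\Theta(1)$. Fix a $2$-coloring $c$ of $K_N$, with $N=R(K_{1,2n},F_n)-1$, that has no red $K_{1,2n}$ and no blue $F_n$. So the red graph has maximum degree at most $2n-1$. This means no vertex has $2n$ red neighbours, and therefore the red graph contains no $F_n$, since $F_n$ has a vertex of degree $2n$. The same coloring therefore has no monochromatic $F_n$, and $R(F_n)>N$.

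The work is then to build such a coloring with $N\ge (3+\sqrt3)n-9$ explicitly, with careful constants. My plan for the construction is as follows.
\begin{itemize}
\item Partition the vertices into a set $A$ with $|A|=a$ and a set $B$ with $|B|=b$.
\item Make $B$ a red graph that is $(2n-1)$-regular (or nearly so), so that the blue graph inside $B$ has small degree and blue matchings inside blue neighbourhoods stay small.
\item Inside $A$, use blue cliques, or a bipartite-like red structure, chosen so that each vertex's blue neighbourhood has matching number below $n$.
\item Let the red $A$--$B$ edges form a biregular graph, so that every vertex has red degree exactly about $2n-1$.
\end{itemize}
A blue $F_n$ centred at $v$ needs a blue matching of size $n$ inside $N_{\text{blue}}(v)$, and $|N_{\text{blue}}(v)|=N-1-d_{\text{red}}(v)$. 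We bound the matching number by Tutte--Berge, using the red structure: blue neighbourhoods split into two parts, one with few internal blue edges and the other small. Optimizing the part sizes gives a quadratic whose root yields $N\approx(3+\sqrt3)n$. Integrality of the degrees, and parity for the regular red graphs, cost the additive constant $-8$.

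\textbf{Upper bound.} Take $N=(5+\ep)n$ and a 2-coloring with no monochromatic $F_n$. Let $v$ be any vertex. By pigeonhole it has at least $(2.5+\ep/2)n$ neighbours in one colour, say red, forming a set $S$. Since $v$ is not the centre of a red $F_n$, the red graph on $S$ has matching number less than $n$. By the Gallai--Edmonds / Tutte--Berge structure, $S$ then contains a set $T$ of size less than $n$ whose removal leaves components that are almost entirely blue among themselves. After a cleaning step this gives a large set $X\subseteq S$ with $|X|\ge(0.5+\ep/2)n$-ish that is nearly blue-complete, or more precisely has blue minimum degree close to $|X|$.

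Next we bootstrap. We iterate over vertices and consider the sizes of red and blue neighbourhoods and of the large near-monochromatic sets inside them. The aim is to show that either
\begin{itemize}
\item a vertex with a blue neighbourhood containing $n$ disjoint blue edges exists, which we find greedily because of the high blue density in $X$ together with the vertices outside; or
\item the coloring is forced close to the extremal structures, namely two red cliques of size about $2n$, or the $(3+\sqrt3)$-type structure.
\end{itemize}
In the second case we exploit the $\ep n$ slack to find a monochromatic $F_n$ directly. The slack also absorbs the error terms of a regularity-free counting argument: Tutte--Berge deficiency bounds and degree averaging lose $O(\sqrt n)$-type amounts, and these losses are what force $n\ge 384/\ep^2$.

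\textbf{Main obstacle.} I expect the upper bound's case analysis to be the hardest part. Moving from $5\frac16$ to $5$ requires showing that the intermediate near-extremal configurations, where blue and red degrees are both around $2.5n$, cannot avoid fans. I would first try a double-counting argument. For every vertex, the monochromatic neighbourhood of larger size has a large near-clique of the opposite colour. Summing over vertices then forces some vertex $u$ whose same-colour neighbourhood meets such a near-clique in at least $2n$ vertices, and inside that intersection we can greedily pick the $n$ disjoint edges needed for a fan centred at $u$.
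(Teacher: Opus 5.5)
\textbf{The upper bound has a genuine gap.} The two steps meant to carry it are asserted rather than proved, and one of them cannot work as framed.

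First, the dichotomy ``either a fan, or the colouring is close to two red $K_{2n}$'s or to the $(3+\sqrt3)$-structure'' is empty at $N=(5+\ep)n$. Those colourings have about $4n$ and $4.73n$ vertices. The constant $5$ reflects the method, not any known extremal colouring, so a $(5+\ep)n$-vertex colouring has nothing to be close to. Ruling out your second alternative is therefore the whole theorem.

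Second, your closing double count (``summing over vertices forces some $u$ whose same-colour neighbourhood meets a near-clique in $2n$ vertices'') comes with no computation, and this is exactly where the difficulty lies. The multipartite set $X$ you extract is only guaranteed to have size about $(\frac12+\frac\ep2)n$. A fan centred in $X$ must therefore take most of its matching from edges leaving $X$, and you give no way to control those edges. You also start from an arbitrary $v$ and never bound degrees from above. Every contradiction in the paper has the form ``some vertex would exceed the maximum monochromatic degree''.

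Concretely, the paper proceeds as follows:
\begin{enumerate}
\item It uses Corollary~\ref{cor:LL} ($R(nK_2,F_n)=3n$) and a vertex $v$ of maximum monochromatic degree to put every monochromatic degree in $[(2.5-\alpha+\ep)n,(2.5+\alpha+\ep)n]$.
\item It applies Edmonds--Gallai in \emph{both} $N_B(v)$ and $N_R(v)$. This gives a red complete multipartite $S_1\subseteq N_B(v)$ and a blue one $S_2\subseteq N_R(v)$, with parts of size at most $6/\ep$.
\item The fan extension lemma (Lemma~\ref{lem:main}), K\"onig's theorem and the degree cap together bound $d_R(u,S_2)$ for $u\in S_1$ and $d_B(w,S_1)$ for $w\in S_2$.
\item It then double counts $s_1s_2=\sum_{u\in S_1}d_R(u,S_2)+\sum_{w\in S_2}d_B(w,S_1)$, case by case in $(s_1,s_2)$.
\item The generalized Chen--Yu--Zhao covering lemma (Corollary~\ref{cor:main}) handles large $s_i$. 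It also handles the case where both $s_i\le n+6/\ep$, applied there to the blue clique left over by a maximum red matching in $T_1$.
\end{enumerate}
The hypothesis $n\ge 384/\ep^2$ comes from the $6/\ep$ part-size threshold and $O(1/\ep)$ error terms, not from $O(\sqrt n)$ losses.

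\textbf{The lower bound needs the construction you did not supply.} Your reduction $R(F_n)\ge R(K_{1,2n},F_n)$ is exactly the paper's. Quoting the lower bound of Theorem~\ref{thm:fanstar} would finish it; the $\Theta(1)$ form from the abstract does not give the constant $-8$.

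The construction you sketch instead is not carried out, and as written it is inconsistent. If the red graph on $B$ is $(2n-1)$-regular, vertices of $B$ have no room left for red $A$--$B$ edges.

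What makes the paper's colouring work is that the dense colour (your blue) is tripartite:
\begin{enumerate}
\item The parts are $X_1,X_2,Y_1\cup Y_2$, with $|X_i|=a\approx\sqrt3\,n$ and $|Y_i|=b\approx\frac{3-\sqrt3}{2}n$.
\item It is complete between $X_1X_2$, $X_1Y_2$ and $X_2Y_1$.
\item Between $X_i$ and $Y_i$ it is a near-biregular bipartite graph (realized via Gale--Ryser), with degrees about $n-1-b$ on the $X_i$ side and $n-1$ on the $Y_i$ side.
\end{enumerate}
Every vertex then has at most $n-1$ neighbours in one of the other two parts. A neighbourhood in a tripartite graph induces a bipartite graph, so no vertex centres an $F_n$. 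The quadratic giving $(3+\sqrt3)n$ comes from the edge balance $a(n-b)\approx bn$ together with $a+2b\approx 3n$, the latter forced by the sparse colour having degree below $2n$.
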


For the upper bound, our improvement is based on the following idea:  By the result of Lin and Li mentioned above, we would be done if any vertex has red degree or blue degree at least $3n$, so every vertex has red degree and blue degree less than $3n$ and consequently greater than $2n$.  Given any vertex $v$, if there were a red matching covering $2n$ vertices in $N_R(v)$, or a blue matching covering $2n$ vertices in $N_B(v)$, we would be done; so suppose not.  Now by applying the Edmonds-Gallai theorem inside \emph{both} $N_R(v)$ and $N_B(v)$ we obtain fairly precise structural information about the coloring.  In particular, we have a large blue complete multipartite graph inside $N_R(v)$ and a large red complete multipartite graph inside $N_B(v)$.  Now the proof proceeds by analyzing the sizes of these monochromatic complete multipartite graphs as well as the edges incident with them (in particular, between them).  We will discuss this idea in more detail later as well as comparing our idea to the previous proofs of Chen, Yu, and Zhao \cite{CYZ} and Dvo{\v{r}}{\'a}k and Metrebian \cite{DM}.

Our improvement for the lower bound begins with the following observation: note that $K_{1,2n}\subseteq F_n$ and thus $R(F_n, K_{1,2n})\leq R(F_n, F_n)=R(F_n)$.  Without explicitly phrasing it this way, Chen, Yu, and Zhao \cite{CYZ} proved that
\begin{equation}\label{eq:fanstar}
4.5n-5\leq R(K_{1,2n}, F_n)\leq R(F_n).
\end{equation}
So any improvement in the lower bound on $R(K_{1,2n}, F_n)$ would improve the lower bound on $R(F_n)$.  Alternatively, proving an upper bound of $4.5n+O(1)$ on $R(K_{1,2n}, F_n)$ would show that the lower bound cannot be improved by considering $R(K_{1,2n}, F_n)$ alone. However, to the best of our knowledge, an upper bound on $R(K_{1,2n}, F_n)$ has never been studied.  In an attempt to provide such an upper bound, we surprisingly showed that $$R(K_{1,2n}, F_n)=(3+\sqrt{3})n+\Theta(1)\approx 4.732n.$$
In fact, we prove a more general result which asymptotically determines the value of $R(K_{1,m}, F_{n})$ for all $n$ and $m$.  

\begin{theorem}\label{thm:fanstar}For all integers $n$ and $m$ with $m>n\geq 2$,  
we have $$\frac{3m+\sqrt{m^2+8n^2}}{2}-8<R(K_{1,m}, F_{n})< \frac{3m+\sqrt{m^2+8n^2}}{2}+1.$$  In particular, $(3+\sqrt{3})n-8< R(K_{1,2n}, F_n)< (3+\sqrt{3})n+1.$
\end{theorem}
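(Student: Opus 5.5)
The threshold in Theorem~\ref{thm:fanstar} has a cleaner form. Writing $N^*=\frac{3m+\sqrt{m^2+8n^2}}{2}$, one checks that $N^*$ is the larger root of
$$(N-m)(N-2m)=2n^2 .$$
Both bounds should therefore come from one optimization in which this product appears. Throughout, call the $K_{1,m}$-colour red and the $F_n$-colour blue. A colouring of $K_N$ with no red $K_{1,m}$ is one in which every vertex has red degree at most $m-1$, hence blue degree at least $N-m$. Having no blue $F_n$ means that for every vertex $v$ the blue graph induced on $W_v:=N_B(v)$ has matching number less than $n$. So the plan is to study this local structure with the Tutte--Berge formula, or equivalently Edmonds--Gallai.

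\textbf{Upper bound.} Suppose $N\ge N^*+1$ and we have a colouring with neither monochromatic structure; we aim for a contradiction. Fix $v$ and write $W=W_v$, so $|W|\ge N-m$. By Tutte--Berge there is a set $S\subseteq W$ such that $W\setminus S$ splits into blue components $C_1,\dots,C_q$ with $|W|-(q-|S|)\le 2n-1$. The blue deficiency $q-|S|$ is therefore at least $|W|-2n+1\ge N-m-2n+1$, which is large.

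\begin{itemize}
\item \emph{Many small components.} A vertex $x\in C_i$ has no blue edges to $W\setminus(S\cup C_i)$. Hence its red degree is at least $|W|-|S|-|C_i|$, and this must be at most $m-1$. So every component satisfies $|C_i|\ge |W|-|S|-m+1$. Since $q$ is large, many components must be tiny, which forces $|S|$ to be large, roughly $|S|\ge |W|-m$.
\item \emph{Bounding $|S|$ from above.} Counting vertices, $|S|+q\le |W|$ and $q\ge |S|+|W|-2n+1$ together give $|S|\le n$, with $q$ correspondingly large.
\item \emph{Playing the two against each other.} The union $U$ of the small components has size about $q$. It spans few blue edges: blue edges inside $U$ lie only within the tiny components. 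So $U$ is nearly a red clique, and any vertex of $U$ then has red degree about $|U|-1$. Combining this with the red-degree cap $m-1$ and with $|W|\ge N-m$ gives a system of inequalities in $|W|$, $|S|$ and $q$.
\end{itemize}

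The main obstacle is the third step. The crude constraints above only reproduce bounds like $R\le 2m+2n$, so one must exploit more carefully where the blue edges from $U$ can go. The idea is to apply the same Tutte--Berge analysis at a second vertex $u\in U$ and compare $S_u$ with $S_v$. I expect this to force the global picture seen in the construction: a set $A$ of size about $N-2m$ inducing a red graph of small degree, together with a complementary structure. Double counting the blue edges between $A$ and the rest, against the matching bounds at each vertex, should produce $(N-m)(N-2m)\le 2n^2+O(N)$. After careful rounding this should give $N<N^*+1$.

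\textbf{Lower bound.} Guided by the same identity, I would take $N=\lfloor N^*\rfloor-8$ and partition the vertex set into a part $X$ with $|X|\approx N-2m$ and a part $Y$ that is a disjoint union of red cliques of size about $m$. Blue would be used between the red cliques, and between $X$ and $Y$, with $X$ internally coloured by an $(m-1)$-regular-ish red graph. For $v\in X$, the blue neighbourhood should contain a small Tutte set, of size about $N-2m$, whose removal leaves many red cliques, so the blue matching number is below $n$. For $v\in Y$, the blue neighbourhood $N-m$ should be arranged so that the number of matchable vertices is at most about $2n^2/(N-m)$ per block. The clique sizes are then chosen so that the constraints for $v\in X$ and for $v\in Y$ are tight simultaneously; this is exactly $(N-m)(N-2m)=2n^2$. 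Checking that red degrees stay at most $m-1$ after integer rounding should cost only the additive constant $8$. Finally, setting $m=2n$ gives $\sqrt{m^2+8n^2}=2\sqrt3\,n$, which yields the stated special case $(3+\sqrt3)n$.
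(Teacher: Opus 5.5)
\textbf{Upper bound.} Write $\gamma=m/n$ and $\delta=(N^*-m)/n$. Your identity $(N-m)(N-2m)=2n^2$ is correct; it is the relation $\delta^2-\gamma\delta-2=0$ that closes the paper's count. Your first two bullets match the paper's preliminary facts; the first is made rigorous in Claim~\ref{clm:q=1}, which shows the smallest odd component is a singleton. Your idea of rerunning the analysis at a second vertex taken from a small component is also exactly what the paper does.

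The gap is that the bound is never proved. The decisive step appears only as an expectation (``I expect \dots\ should produce \dots''), and, as you say yourself, what precedes it gives nothing close to $N^*$. The missing argument is a specific double count:
\begin{itemize}
\item Make the colouring maximal without a red $K_{1,m}$, so every blue edge has an endpoint of blue degree exactly $N-m$.
\item Show the maximum blue degree satisfies $\Delta<m+n$; otherwise $G_B[N_B(v)]$ has minimum degree at least $n$ and contains $n$ disjoint edges.
\item Take $v_1$ of blue degree $\Delta$, with Tutte set $S_1$ and $X_1=N_B(v_1)\setminus S_1$. Let $v_2$ be a singleton component of $G_B[N_B(v_1)]-S_1$. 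Then $N_B(v_2)\cap X_1=\emptyset$ and $d_B(v_2)=N-m$.
\item Set $X_2=N_B(v_2)\setminus S_2$ and let $Y$ be the rest. Each component of $X_i$ has blue neighbours only in itself, $X_{3-i}$ and $Y$, which bounds $e_B(X_1\cup X_2,Y)$ from below.
\item After simplifying with the identity, this count exceeds $\Delta|Y|$ by a sum of nonnegative products plus $2$. So some vertex of $Y$ has blue degree above $\Delta$, a contradiction.
\end{itemize}
This argument is exact. An inequality of the form $(N-m)(N-2m)\le 2n^2+O(N)$, which is what you aim for, would only give $N\le N^*+O(1)$, because the derivative of $(N-m)(N-2m)$ at $N^*$ is $\sqrt{m^2+8n^2}=\Theta(N)$. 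Also, to get $R<N^*+1$ you must reach a contradiction already at $N=\lceil N^*\rceil$, not merely for $N\ge N^*+1$.

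\textbf{Lower bound.} As specified, your construction fails. A red clique has at most $m$ vertices, so your $Y$ (about $2m$ vertices) consists of two red cliques $Y_1,Y_2$ of size $m-o(n)$. These are completely blue to each other, and each of their vertices can take only $o(n)$ further red edges. Each $x\in X$ must have fewer than $n$ blue neighbours in $Y_1$ or in $Y_2$; otherwise the blue edges between these two neighbour sets give $x$ a blue $F_n$. So each $x$ needs about $m-n$ red edges into $Y$. For $m=2n$ that totals roughly $(\sqrt3-1)n\cdot n$ red $X$--$Y$ edges, while $Y$ can absorb only $o(n^2)$.

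The shape you describe is nevertheless essentially the paper's after relabelling: your cliques are the paper's $X_1,X_2$, and your $X$ is its $Y_1\cup Y_2$. What differs is the part sizes and the $X$--$Y$ colouring. In your colours the paper's example is:
\begin{itemize}
\item $X_1,X_2$ are red cliques of size $a\approx N^*-m-n$, strictly less than $m$ because $m>n$;
\item $Y_1\cup Y_2$ is a red clique of size $2b\approx 2n-(N^*-2m)$;
\item the pairs $X_1X_2$ and $X_iY_{3-i}$ are completely blue;
\item between $X_i$ and $Y_i$ the blue edges form a bipartite graph in which each vertex of $Y_i$ has blue degree about $n-1$ and each vertex of $X_i$ about $n-1-b$, with all other edges red.
\end{itemize}
The blue graph is then tripartite with parts $X_1,X_2,Y_1\cup Y_2$, and every vertex has at most $n-1$ blue neighbours in one of the other two parts. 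This rules out a blue $F_n$, and every red degree is at most $m-1$. For $m=2n$ the parts have sizes about $\sqrt3 n,\sqrt3 n,(3-\sqrt3)n$, not $2n,2n,(\sqrt3-1)n$.

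The step you dismiss as costing ``only the additive constant $8$'' is where the work lies. One needs these bipartite graphs with every degree within $\sigma\le 4$ of its target. The paper obtains them from Gale--Ryser (Lemma~\ref{lem:GR}) after checking $-\sigma b\le a(n-1-b)-b(n-1)\le\sigma a$.
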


It is known \cite{Has} (see \cite[Theorem 1]{LiSc}) that $1\leq m\leq n$, then $R(K_{1,m}, F_{n})= m+2n-1$ if $m$ is even and $R(K_{1,m}, F_{n})= m+2n$ if $m$ is odd
    \footnote{For the upper bound, if there is no red $K_{1,m}$, then some vertex $v$ will have blue degree at least $2n$.  Indeed, if $m$ is odd, this is clear. If $m$ is even, then $N=m+2n-1$ is odd and $m-1$ is odd, so some vertex must have red degree at most $m-2$ and consequently blue degree at least $2n$. Furthermore, every vertex in $N_B(v)$ will have at least $n$ blue neighbors in $N_B(v)$ giving a matching of size $n$ (by a well-known folklore result).  For the lower bound, an $(m-1)$-regular graph (so the complement has maximum degree at most $2n-1$) on $N=m+2n-\frac{1+(-1)^m}{2}-1$ exists (since either $N$ is even or $m-1$ is even).}.
Also when $m\geq n(n-1)$, Zhang, Broersma, and Chen \cite{ZBC3} proved that $R(K_{1,m}, F_n)=2m+1$.  Note that when $m\geq n(n-1)$, this matches Theorem \ref{thm:fanstar} within a constant.

So by combining Theorem \ref{thm:fanstar} with the previously known cases we have
\begin{equation}\label{eq:starfan}
R(K_{1,m}, F_{n})= 
    \begin{cases} 
     m+2n-\frac{1+(-1)^{m}}{2} & m\leq n \\
     \frac{3m+\sqrt{m^2+8n^2}}{2}+\Theta(1) & n< m< n(n-1)\\
     2m+1 & m\geq n(n-1)
   \end{cases}.
\end{equation}

\subsection{Dirac}

As a standalone result, it may be more natural to state Theorem \ref{thm:fanstar} as a result about graphs with sufficiently large minimum degree.  The following observation gives the link between the two formulations.

\begin{observation}
Let $m$ be a positive integer and let $H$ be a graph.  $R(K_{1,m}, H)\leq N$ if and only if every graph $G$ on $N$ vertices with $\delta(G)\geq N-m$ contains a copy of $H$.
\end{observation}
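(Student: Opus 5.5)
This is a direct complementation argument: I will unwind the definition of $R(K_{1,m},H)$ and translate ``no red $K_{1,m}$'' into a degree condition on the blue graph. Throughout I identify a $2$-coloring of $E(K_N)$ with the pair $(\bar G, G)$, where $G$ is the graph of color-$2$ (blue) edges and $\bar G$ is the graph of color-$1$ (red) edges. The one fact needed is that the red graph contains $K_{1,m}$ exactly when some vertex has red degree at least $m$. For every vertex $v$ we have $d_{\bar G}(v)=N-1-d_G(v)$, so
\[
\text{no red } K_{1,m} \iff \Delta(\bar G)\le m-1 \iff \delta(G)\ge N-1-(m-1)=N-m.
\]

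For the forward direction, assume $R(K_{1,m},H)\le N$ and let $G$ be any graph on $N$ vertices with $\delta(G)\ge N-m$. Color the edges of $G$ blue and the edges of $\bar G$ red. By the equivalence above there is no red $K_{1,m}$. The Ramsey number is monotone in the number of vertices: restricting a coloring of a larger complete graph to $N'=R(K_{1,m},H)$ vertices shows every coloring of $K_N$ with $N\ge N'$ contains a red $K_{1,m}$ or a blue $H$. Hence the blue graph $G$ contains $H$.

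For the reverse direction, assume every $N$-vertex graph with minimum degree at least $N-m$ contains $H$, and take any $2$-coloring of $E(K_N)$. If the red graph contains $K_{1,m}$, we are done. Otherwise, by the equivalence, the blue graph $G$ satisfies $\delta(G)\ge N-m$, so it contains $H$. This shows $R(K_{1,m},H)\le N$.

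I expect no real obstacle. The only points that need care are the off-by-one between red degree $m$ and blue degree $N-1-m$, and the use of monotonicity in the forward direction so that the claim holds for every $N$ and not only for $N=R(K_{1,m},H)$.
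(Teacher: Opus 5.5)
Your proof is correct and is the standard complementation argument the paper has in mind: the paper states this observation without proof, and at the start of the upper-bound proof of Theorem~\ref{thm:fanstar} it uses exactly your translation, that no $K_{1,m}$ in one color is equivalent to every vertex having degree at least $N-m$ in the other color. The paper leaves the monotonicity of the Ramsey property in $N$ implicit; you make it explicit and handle it correctly.
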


So we may rephrase Theorem \ref{thm:fanstar} as follows.  

\begin{theorem}\label{thm:dirac_fan_approx}
Let $n$ and $k$ be positive integers with $2k+1\leq n$ and define $\alpha$ so that $k=\alpha n$ (note that $\alpha$ may depend on $n$ if $k=o(n)$).  If $G$ is a graph on $n$ vertices with 
$$\delta(G)\geq 
\begin{cases}
\frac{n+1}{2} & k<\sqrt{n}\\
(\frac{1+\sqrt{1+16\alpha^2}}{4})n +\Theta(1) & \sqrt{n}\leq k< \frac{n}{3}\\
2k & \frac{n}{3}\leq k< \frac{n}{2},
\end{cases}
$$
then $F_{k}\subseteq G$.  Furthermore, this is best possible (asymptotically so in the case when $\sqrt{n}\leq k<\frac{n}{3}$).
\end{theorem}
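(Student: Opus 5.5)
Theorem~\ref{thm:dirac_fan_approx} is a translation of equation~\eqref{eq:starfan} through the Observation, so the plan is to convert in each regime. Write $N$ for the number of vertices of $G$; the statement calls this $n$, and I rename it to avoid a clash with the fan parameter, which I call $k$. The dictionary is $m=N-\delta$: by the Observation, every $G$ on $N$ vertices with $\delta(G)\geq N-m$ contains $F_k$ if and only if $R(K_{1,m},F_k)\leq N$. So for each $k$ the threshold is the least $\delta$ with $R(K_{1,N-\delta},F_k)\leq N$. Because $R(K_{1,m},F_k)$ is increasing in $m$, this amounts to finding the largest $m$ with $R(K_{1,m},F_k)\leq N$. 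Tightness comes from the same equivalence: if $R(K_{1,m+1},F_k)>N$, the Observation gives a graph on $N$ vertices with minimum degree $N-m-1$ and no $F_k$.

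\textbf{Regime $m\leq k$.} Here $R=m+2k-O(1)$. The condition $m+2k\leq N$ means $\delta=N-m\geq 2k$. The case $m\leq k$ is exactly $\delta\geq N-k$, which is consistent with $2k$ being the threshold when $2k\geq N-k$, that is, $k\geq N/3$. The parity term is handled by the exact formula in \eqref{eq:starfan}.

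\textbf{Regime $k<m<k(k-1)$.} Use Theorem~\ref{thm:fanstar} and solve $\frac{3m+\sqrt{m^2+8k^2}}{2}=N$ for $m$. Isolating the root gives $\sqrt{m^2+8k^2}=2N-3m$. Squaring and simplifying gives $2m^2-3Nm+N^2-2k^2=0$, whose relevant root is
\[
m=\frac{3N-\sqrt{N^2+16k^2}}{4}.
\]
Hence
\[
\delta=N-m=\frac{N+\sqrt{N^2+16k^2}}{4}=\frac{1+\sqrt{1+16\alpha^2}}{4}\,N.
\]
The $\pm 8$ constants in Theorem~\ref{thm:fanstar} change $m$ by $O(1)$, because the map $m\mapsto\frac{3m+\sqrt{m^2+8k^2}}{2}$ has derivative between $3/2$ and $2$. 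This yields the $\Theta(1)$ error term.

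\textbf{Regime $m\geq k(k-1)$.} Here $R=2m+1$, so the condition is $N\geq 2m+1$, giving $\delta\geq (N+1)/2$.

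\textbf{Regime boundaries.} $m\geq k(k-1)$ corresponds to $N\approx 2m\gtrsim 2k^2$, that is, $k\lesssim\sqrt{N/2}$. The $\sqrt{N}$ in the statement is a cutoff inside the asymptotic window; near it both formulas give $N/2+O(1)$, since $\sqrt{N^2+16k^2}=N+O(k^2/N)=N+O(1)$. Likewise $m=k$ corresponds to $N=3k$, where $\frac{1+\sqrt{1+16/9}}{4}=\frac23$ matches $2k/N$. The hypothesis $2k+1\leq N$ guarantees $F_k$ fits.

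The main obstacle is the bookkeeping at the regime boundaries: checking that the intervals in $m$ correspond to the stated intervals in $k$, up to the $\Theta(1)$ slack. Both thresholds are increasing in $k$, so I would verify the endpoints directly and use monotonicity in between.
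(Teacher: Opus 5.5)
Your translation through the Observation is how the paper obtains this theorem; it offers no other argument, only ``we may rephrase Theorem~\ref{thm:fanstar}''. Your middle-regime computation is correct: the root $m=\frac{3N-\sqrt{N^2+16k^2}}{4}$, with your derivative bound turning the $+1$ and $-8$ slack of Theorem~\ref{thm:fanstar} into $\Theta(1)$. Sufficiency of $\delta\ge 2k$ for $k\ge N/3$ is also correct.

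\textbf{Gap in the first case.} That case asserts the \emph{exact} threshold $\frac{N+1}{2}$ for every $k<\sqrt N$, with no error term. The identity $R(K_{1,m},F_k)=2m+1$ requires $m\ge k(k-1)$, which for $m\approx N/2$ means $k\lesssim\sqrt{N/2}$. In the window $\sqrt{N/2}\lesssim k<\sqrt N$ you only have Theorem~\ref{thm:fanstar}, whose upper bound gives $R(K_{1,m},F_k)<2m+1+\frac{2k^2}{m}$. For $k^2$ close to $N$ this certifies only about $\delta\ge \frac N2+2$. Calling $\sqrt N$ ``a cutoff inside the asymptotic window'' concedes this rather than fixing it, since the first case carries no $\Theta(1)$.

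Closing the window needs an input not contained in \eqref{eq:starfan}: the minimum-degree theorem of Zhang, Broersma and Chen quoted in the Tur\'an subsection ($\delta(G)\ge \frac N2+1$ and $k<\sqrt N$ imply $F_k\subseteq G$). That result agrees with $\frac{N+1}{2}$ only for even $N$. For odd $N$ in this window, none of the results cited in the paper yields $\frac{N+1}{2}$. Tightness in the first case needs no Ramsey input at all, since $K_{\lfloor N/2\rfloor,\lceil N/2\rceil}$ is triangle-free.

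\textbf{Parity in the third case.} Your claim that ``the parity term is handled by the exact formula'' hides an off-by-one that the formula actually exposes. Put $m=N-2k+1$. If $N$ is odd and $N\le 3k-1$, then $m$ is even and $m\le k$. So $R(K_{1,m},F_k)=m+2k-1=N$, and already $\delta\ge 2k-1$ forces $F_k$.

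For example, take $N=7$, $k=3$. A graph with $\delta\ge 5$ has a matching as its complement, so some vertex has degree $6$. Its neighbourhood, $K_6$ minus a matching, contains a perfect matching, giving $F_3$.

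Hence $2k$ is best possible only for even $N$. There any $(2k-1)$-regular graph on $N$ vertices is the witness. That witness also covers $N=3k$, where $m=k+1$ falls outside your $m\le k$ regime. These caveats should be stated explicitly rather than absorbed into the formula.
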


\subsection{Tur\'an}

Given a graph $H$, let $\ex(n,H)$ be the maximum number of edges in an $H$-free $n$-vertex graph.  The quantity $\ex(n,H)$ is typically referred to as the Tur\'an number of $H$.  As is well known, the Tur\'an number of a graph $H$ is related to its Ramsey number in the sense that if we have a 2-coloring of $K_N$ in which one of the color classes has more than $\ex(N,H)$ edges, we would get a monochromatic copy of $H$.  This is potentially useful if $\frac{\ex(n,H)}{\binom{n}{2}}$ is not much bigger than $1/2$.

The Tur\'an number of $F_k$ was determined by Erd\H{o}s, F\"uredi, Gould, and Gunderson \cite{EFGG} for all $1\leq k\leq \sqrt{\frac{n}{50}}$.  
They proved
\[
\ex(n,F_k)= 
    \begin{cases}
     \floor{\frac{n^2}{4}}+k^2-k & k \text{ is odd} \\
      \floor{\frac{n^2}{4}}+k^2-\frac{3k}{2} & k \text{ is even},
   \end{cases}
\]
where the lower bound roughly comes from a complete balanced bipartite graph on $n$ vertices with two disjoint cliques of order $k-1$ inside one of the parts.  

They conjectured that their result should hold for all $1\leq k\leq \frac{n}{4}$.  Inspired by Theorem \ref{thm:dirac_fan_approx}, we raise the following asymptotic version of the problem which also extends their conjecture to the case when $k>\frac{n}{4}$.  

\begin{problem}\label{prob:turan}
Let $n$ and $k$ be integers such that $k=\alpha n$ for some $0<\alpha<\frac{1}{2}$.  Is it true that
$$
\ex(n,F_{\alpha n})=
\begin{cases}
\left(\frac{1}{2}+2\alpha^2+o(1)\right)\frac{n^2}{2} & 0<\alpha\leq \frac{1}{4}\\
(2\alpha (2-3\alpha)+o(1))\frac{n^2}{2} &\frac{1}{4}<\alpha\leq \frac{1}{3}\\
(2\alpha+o(1))\frac{n^2}{2} & \frac{1}{3}<\alpha< \frac{1}{2}
\end{cases}?
$$
\end{problem}

For the lower bounds, if $\frac{1}{4}<\alpha\leq \frac{1}{3}$, then let $H_{n,k}$ be the complete tripartite graph with parts of size $k-1=\alpha n-1$, $k-1=\alpha n-1$, and $(1-2\alpha)n-2$ respectively.  If $\frac{1}{3}<\alpha<\frac{1}{2}$, and $n$ is even, then let $H_{n,k}$ be a $2k-1=2\alpha n-1$ regular graph; otherwise let $H_{n,k}$ be the graph with the most edges in which every vertex has degree $2k-2=2\alpha n-2$ or $2k-1=2\alpha n-1$.

It is also worth noting that the extremal examples when $\alpha<1/3$ are not close to regular which indicates why the bound on the number of edges for the Dirac version of the problem is smaller than the conjectured bound on the number of edges in the Tur\'an version of the problem.  
Also it is interesting to note that Zhang, Broersma, and Chen \cite{ZBC3} proved that if $G$ is an $n$-vertex graph with $\delta(G)\geq \frac{n}{2}+1$ and $k<\sqrt{n}$, then $F_k\subseteq G$ and for the Tur\'an version of the problem, the result \cite{EFGG} holds when $k=O(\sqrt{n})$.


\subsection{Overview}

In Section \ref{sec:starfan} we prove Theorem \ref{thm:fanstar} regarding the Ramsey numbers of stars vs.~fans (the lower bound of which provides the lower bound in Theorem \ref{thm:main}).  In Section \ref{sec:fan_upper} we prove the upper bound in Theorem \ref{thm:main}.  In Section \ref{sec:conclusion} we discuss some additional problems.  In the appendix, we include a proof for the lower bound on $R(K_{1,m}, F_n)$ in the specific case where $m=2n$.

\subsection{Notation}

Let $G$ be a graph and let $X,Y\subseteq V(G)$ with $X\cap Y=\emptyset$.  We write $G[X]$ for the graph on $X$ induced by edges with both endpoints in $X$ and $G[X,Y]$ for the bipartite graph on $X\cup Y$ induced by edges with one endpoint in $X$ and the other in $Y$.  Given a graph $G$ and a 2-coloring of the edges of $G$, we write $G_i$ for the graph on $V(G)$ induced by the edges of color $i$.  We write $N_i(v)$ for the neighborhood of $v$ in $G_i$ and $d_i(v)=|N_i(v)|$ for the degree of $v$ in $G_i$.  
Often we use $B$ and $R$ for the subscripts (to mean ``blue'' and ``red'' respectively) in place of 1 and 2.

Given a graph $G$, we write $\nu(G)$ for the number of edges in a maximum matching of $G$.

\section{Stars vs.~fans}\label{sec:starfan}

In this section we prove Theorem \ref{thm:fanstar} which says that for all $m>n$, 
$$\frac{3m+\sqrt{m^2+8n^2}}{2}-8<R(K_{1,m}, F_{n})< \frac{3m+\sqrt{m^2+8n^2}}{2}+1.$$

\subsection{Lower bound}\label{subsec:starfan_lower}

To construct our lower bound example, we first need a result about the realization of bipartite graphs satisfying certain degree conditions.  To prove that result, we will use the Gale-Ryser theorem.  We say that a pair of sequences $(x_1, \dots, x_a)$ and $(y_1, \dots, y_b)$ of non-negative integers is \emph{bigraphic} if there exists a bipartite graph with parts $X$ and $Y$ such that $(x_1, \dots, x_a)$ is the degree sequence of the vertices in $X$ and $(y_1, \dots, y_b)$ is the degree sequence of the vertices in $Y$.

\begin{theorem}[Gale \cite{Gale}, Ryser \cite{Ry}]\label{thm:GR}
A pair of sequences $(x_1, \dots, x_a)$ and $(y_1, \dots, y_b)$ of non-negative integers with $x_1\geq x_2\geq \dots \geq x_a$ is bigraphic if and only if $\sum_{i=1}^ax_i=\sum_{i=1}^by_i$ and for all $1\leq k\leq a$, $\sum_{i=1}^kx_i\leq \sum_{i=1}^b\min\{y_i,k\}$.
\end{theorem}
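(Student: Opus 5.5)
The statement is the Gale--Ryser theorem. I would prove necessity by double counting and sufficiency with a max-flow/min-cut argument.

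\emph{Necessity.} Suppose a bipartite graph with parts $X=\{u_1,\dots,u_a\}$ and $Y=\{w_1,\dots,w_b\}$ realizes the sequences. Counting edges from either side gives $\sum_i x_i=\sum_j y_j$. Fix $k$ and let $A=\{u_1,\dots,u_k\}$. Every edge at $A$ has its other end in $Y$. The vertex $w_j$ receives at most $\min\{y_j,k\}$ of these edges, since it has degree $y_j$ and only $k$ possible neighbours in $A$. Summing over $j$ gives $\sum_{i\le k}x_i\le\sum_j\min\{y_j,k\}$.

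\emph{Sufficiency.} Build a network as follows.
\begin{itemize}
\item Join a source $s$ to each $u_i$ with capacity $x_i$.
\item Join each $u_i$ to each $w_j$ with capacity $1$.
\item Join each $w_j$ to a sink $t$ with capacity $y_j$.
\end{itemize}
By the integral max-flow theorem, an integral flow of value $\sum_i x_i$ exists if and only if every $s$--$t$ cut has capacity at least $\sum_i x_i$. Given such a flow, the unit edges $u_iw_j$ carrying flow form a simple bipartite graph. In it, $u_i$ has degree $x_i$ because its source edge is saturated. Each $w_j$ has degree at most $y_j$, and equality holds for all $j$ because the total flow is $\sum_i x_i=\sum_j y_j$. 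So it remains to check the cut condition.

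\emph{Cut computation.} A cut is determined by the sets $A\subseteq X$ and $C\subseteq Y$ of vertices on the source side. Its capacity is
$$\sum_{u_i\notin A}x_i+|A|\,|Y\setminus C|+\sum_{w_j\in C}y_j .$$
For fixed $A$ with $|A|=k$, the optimal $C$ puts $w_j$ on the source side exactly when $y_j<k$. The minimum over $C$ is therefore $\sum_{u_i\notin A}x_i+\sum_j\min\{y_j,k\}$. This is at least $\sum_i x_i$ if and only if $\sum_{u_i\in A}x_i\le\sum_j\min\{y_j,k\}$.

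Since the $x_i$ are sorted in non-increasing order, the left-hand side is largest when $A=\{u_1,\dots,u_k\}$. So the hypothesis for each $k$ (and trivially for $k=0$) covers every cut.

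The only step needing care is this cut minimization, specifically the reduction to prefix sets using the sortedness of the $x_i$. The rest is standard.
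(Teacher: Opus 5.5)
Your proof is correct. The paper gives no proof of this statement: it quotes the Gale--Ryser theorem and uses only the sufficiency direction, as a black box, in Lemma~\ref{lem:GR}. There $y_j=d$ for all $j$, so the condition reduces to $\sum_{i\le k}x_i\le b\min\{d,k\}$. So there is no in-paper argument to compare against. What you give is the classical network-flow proof, which I believe is essentially Gale's original route, whereas Ryser constructed the $0$--$1$ matrix directly.

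The details all check out:
\begin{itemize}
\item Necessity uses simplicity of the graph exactly where needed, namely that each $w_j$ has at most $k$ neighbours in $A$.
\item Saturation of the source arcs together with $\sum_i x_i=\sum_j y_j$ forces every sink arc to be saturated, so the degrees in $Y$ are exact.
\item The cut capacity formula is right for the directed network.
\item Minimizing over $C$ coordinatewise gives $\sum_j\min\{y_j,|A|\}$.
\item Sortedness of the $x_i$ reduces every cut with $|A|=k$ to the prefix inequality.
\end{itemize}

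The flow argument is short, at the price of invoking integral max-flow/min-cut. A Ryser-style construction would be self-contained and algorithmic. For the paper's purposes either suffices.
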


\begin{lemma}\label{lem:GR}
Let $a,b$ be positive integers, let $c,d,\sigma$ be non-negative integers with $c\leq b$ and $d\leq a$,  and let $A$ and $B$ be disjoint sets with $|A|=a$ and $|B|=b$.  If $-\sigma b\leq ac-bd\leq \sigma a$, then there exists a bipartite graph with parts $A$ and $B$ such that every vertex in $A$ has degree between $c-\sigma$ and $c$ and every vertex in $B$ has degree between $d-\sigma$ and $d$.   
\end{lemma}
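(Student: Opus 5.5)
We will prove Lemma~\ref{lem:GR} by building nearly regular degree sequences with a common sum and checking the Gale--Ryser condition of Theorem~\ref{thm:GR}.

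\textbf{Choosing the number of edges.} Set $E=\min\{ac,bd\}$. We claim that
$$\max\{a(c-\sigma),\, b(d-\sigma)\}\le E\le \min\{ac,bd\}.$$
The upper bound holds by definition. For the lower bound there are four comparisons to check:
\begin{itemize}
\item $a(c-\sigma)\le ac$ and $b(d-\sigma)\le bd$ are trivial.
\item $a(c-\sigma)\le bd$ is equivalent to $ac-bd\le \sigma a$, which is one of the hypotheses.
\item $b(d-\sigma)\le ac$ is equivalent to $ac-bd\ge -\sigma b$, which is the other hypothesis.
\end{itemize}
Hence $c-\sigma\le E/a\le c$ and $d-\sigma\le E/b\le d$.

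\textbf{The degree sequences.} Give the vertices of $A$ degrees $\lfloor E/a\rfloor$ or $\lceil E/a\rceil$, with the number of each chosen so that the sum is exactly $E$. Do the same for $B$ with $\lfloor E/b\rfloor$ and $\lceil E/b\rceil$. Since $c$ and $c-\sigma$ are integers, the bounds $c-\sigma\le E/a\le c$ give $c-\sigma\le\lfloor E/a\rfloor\le\lceil E/a\rceil\le c$. So every degree in $A$ lies in $[c-\sigma,c]$, and likewise every degree in $B$ lies in $[d-\sigma,d]$. All degrees are non-negative because $E\ge 0$. Moreover every $x_i\le c\le b$ and every $y_j\le d\le a$.

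\textbf{Gale--Ryser check.} The two sums are both $E$, so it remains to verify that $\sum_{i\le k}x_i\le \sum_j\min\{y_j,k\}$ for each $1\le k\le a$. Let $q=\lfloor E/b\rfloor$, so every $y_j$ lies in $\{q,q+1\}$. There are two cases.
\begin{itemize}
\item If $k\le q$, then $\min\{y_j,k\}=k$ for all $j$, so the right side equals $bk$. Since each $x_i\le b$, the left side is at most $kb$.
\item If $k\ge q+1$, then $\min\{y_j,k\}=y_j$ for all $j$, so the right side equals $E$. The left side is a partial sum of non-negative terms totalling $E$, so it is at most $E$.
\end{itemize}
These cases cover all $k$. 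Theorem~\ref{thm:GR} then yields a bipartite graph on $A\cup B$ with the prescribed degrees, which proves the lemma.

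The only delicate point is the choice of a single edge count $E$ compatible with both degree windows; this is exactly where the hypothesis $-\sigma b\le ac-bd\le\sigma a$ is used. The Gale--Ryser verification is routine because both sequences are nearly regular.
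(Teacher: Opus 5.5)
Your proof is correct and is essentially the paper's argument. The paper splits on the sign of $ac-bd$, gives the side attaining $\min\{ac,bd\}$ constant degree $c$ or $d$ and the other side degrees $c-q, c-q-1$ (resp.\ $d-q, d-q-1$) via division with remainder, and then checks Gale--Ryser; your single choice $E=\min\{ac,bd\}$ produces exactly these sequences, covers both signs at once, and checks the Gale--Ryser condition over two ranges of $k$ rather than three.
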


\begin{proof}
If $0\leq ac-bd\leq \sigma a$, let $q$ and $r$ be the unique integers such that $ac-bd=qa+r$ where $0\leq r\leq a-1$.  Note that $q\leq \sigma$ and $(a-r)(c-q)+r(c-q-1)=bd$ (also note that if $q=\sigma$, then $r=0$).  Furthermore, for all $1\leq k\leq d$, we have $k(c-q)\leq kb$, for all $d\leq k\leq a-r$ we have $k(c-q)\leq (a-r)(c-q)\leq bd$, and for all $a-r\leq k\leq a$ we have $(a-r)(c-q)+(k-(a-r))(c-q-1)\leq (a-r)(c-q)+r(c-q-1)=bd$.  So by Theorem \ref{thm:GR} there is a bipartite graph with parts $A$ and $B$ where $a-r$ vertices in $A$ have degree $c-q$ and $r$ vertices in $A$ have degree $c-q-1$, and every vertex in $B$ has degree $d$.  

If $-\sigma b\leq ac-bd< 0$, let $q$ and $r$ be the unique integers such that $bd-ac=qb+r$ where $0\leq r\leq b-1$.  Note that $q\leq \sigma$ and $(b-r)(d-q)+r(d-q-1)=ac$ (also note that if $q=\sigma$, then $r=0$). In this case we have $(b-r)(d-q)+r(d-q-1)=ac$.  
Furthermore, for all $1\leq k\leq c$, we have $k(d-q)\leq ka$, for all $c\leq k\leq b-r$ we have $k(d-q)\leq (b-r)(d-q)\leq ac$, and for all $b-r\leq k\leq b$ we have $(b-r)(d-q)+(k-(b-r))(d-q-1)\leq (b-r)(d-q)+r(d-q-1)=ac$.  So by Theorem \ref{thm:GR} there is a bipartite graph with parts $A$ and $B$ where $r$ vertices in $B$ have degree $d-q$ and $r$ vertices have degree $d-q-1$, and every vertex in $A$ has degree $c$. 
\end{proof}

For the reader who is specifically interested in the case when $m=2n$ (which is the case relevant to the lower bound on $R(F_n)$) the appendix contains a proof for that case (a number of the calculations become simpler when $m$ is a fixed multiple of $n$).

\begin{example}[General example]
For all integers $m$ and $n$ with $m> n\geq 2$, we have
$$
R(K_{1,m}, F_n)\geq 2\floor{\frac{m+\sqrt{m^2+8n^2}}{2}-n}+2\floor{n-\frac{\sqrt{m^2+8n^2}-m}{4}}-4> \frac{3m+\sqrt{m^2+8n^2}}{2}-8.
$$
In particular, $R(K_{1,2n}, F_n)>\left(3+\sqrt{3}\right)n-8$.
\end{example}

Note that if $m> \frac{2}{9}n^2-9$, then $2m+1> \frac{3m+\sqrt{m^2+8n^2}}{2}-8$ and thus the lower bound of \cite{ZBC3} is better.

\begin{proof}
Let $m$ and $n$ be integers with $m>n\geq 2$.  Define constants $a:=\floor{\frac{m+\sqrt{m^2+8n^2}}{2}-n}-1$ and $b:=\floor{n-\frac{\sqrt{m^2+8n^2}-m}{4}}-1$.  Take disjoint sets $X_1, X_2, Y_1, Y_2$ such that $|X_1|=|X_2|=a$ and $|Y_1|=|Y_2|=b$ and set $N=2a+2b$.  Set $\sigma:=m+n-1-a-2b$ and note that $$\sigma=m+n-\floor{\frac{m+\sqrt{m^2+8n^2}}{2}-n}-2\floor{n-\frac{\sqrt{m^2+8n^2}-m}{4}}+2\geq 2$$
where $\sigma=2$ if and only if $\frac{\sqrt{m^2+8n^2}-m}{4}$ is an integer (which implies $\frac{m+\sqrt{m^2+8n^2}}{2}$ is an integer as well).  Also note that $\sigma<5$ and thus $\sigma\leq 4$.  

Now add all red edges between $X_1$ and $X_2$, between $X_1$ and $Y_2$, and between $X_2$ and $Y_1$.   Between $X_i$ and $Y_i$ we put red edges so that the induced bipartite graph between $X_i$ and $Y_i$ has the property that every vertex in $X_i$ has degree between $n-1-b-\sigma$ and $n-1-b$ and every vertex in $Y_i$ has degree between $n-1-\sigma$ and $n-1$.

\begin{figure}[ht]
    \centering
    \includegraphics[scale=1]{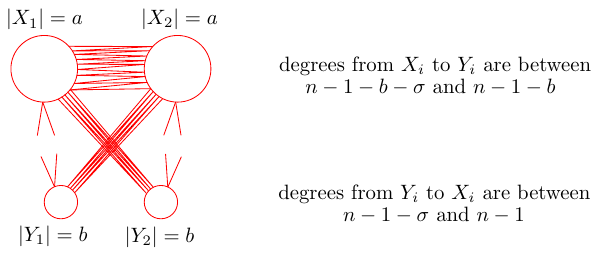}
    \caption{A graph on $N$ vertices with minimum degree at least $N-m$ having no copy of $F_n$.  The values of $a,b,\sigma$ are as in the proof.}
    \label{fig:example}
\end{figure}

Since the minimum degree of the red graph is at least $$a+n-1-\sigma=a+n-1-(m+n-1-a-2b)=2a+2b-m= N-m$$ there is no blue copy of $K_{1,m}$.  Also note that the red graph is tripartite with parts $X_1, X_2, Y_1\cup Y_2$, and every vertex has degree at most $n-1$ to one of the other parts, so there is no red copy of $F_n$. 

All that remains is to show that such a bipartite graph exists between $X_i$ and $Y_i$.  By Lemma \ref{lem:GR} (with $c=n-1-b$ and $d=n-1$), this amounts to checking that $-\sigma b\leq a(n-1-b)-b(n-1)\leq \sigma a.$  Indeed, first suppose $\frac{\sqrt{m^2+8n^2}-m}{4}$ is an integer, which implies $\frac{m+\sqrt{m^2+8n^2}}{2}$ is an integer.  In this case we have $\sigma=2$ and thus
\begin{align*}
&2 a-(a(n-1-b)-b(n-1))\\
&= b(n-1)-a(n-3-b)\\
&= (n-\frac{\sqrt{m^2+8n^2}-m}{4}-1)(n-1)-(\frac{m+\sqrt{m^2+8n^2}}{2}-n-1)(\frac{\sqrt{m^2+8n^2}-m}{4}-2)\\
&=\frac{3\sqrt{m^2+8n^2}+m-8n-2}{2}>\frac{9n+n-8n-2}{2}=n-1>0,
\end{align*}
where the last two inequalities hold since $m> n\geq 2$.  Also we have 
\begin{align*}
&2 b-(b(n-1)-a(n-1-b))\\
&= a(n-1-b)-b(n-3)\\
&= (\frac{m+\sqrt{m^2+8n^2}}{2}-n-1)\frac{\sqrt{m^2+8n^2}-m}{4}-(n-\frac{\sqrt{m^2+8n^2}-m}{4}-1)(n-3)\\
&=4n-3-(\sqrt{m^2+8n^2}-m)\\
&=4n-3-\frac{8n^2}{\sqrt{m^2+8n^2}+m}>4n-3-\frac{8n^2}{4n}=2n-3 > 0 ,
\end{align*}
where the last two inequalities hold since $m> n\geq 2$. 

Now suppose $\frac{\sqrt{m^2+8n^2}-m}{4}$ is not an integer.  In this case we have $\sigma\geq 3$ and thus
\begin{align*}
&3 a-(a(n-1-b)-b(n-1))\\
&= b(n-1)-a(n-4-b)\\
&> (n-\frac{\sqrt{m^2+8n^2}-m}{4}-2)(n-1)-(\frac{m+\sqrt{m^2+8n^2}}{2}-n-1)(\frac{\sqrt{m^2+8n^2}-m}{4}-2)\\
&=\frac{m+ 3 \sqrt{m^2 + 8 n^2}}{2} - 5 n> \frac{n+ 9n}{2} - 5 n = 0,
\end{align*}
where the last inequality holds since $m> n\geq 2$.  Also we have 
\begin{align*}
&3 b-(b(n-1)-a(n-1-b))\\
&= a(n-1-b)-b(n-4)\\
&> (\frac{m+\sqrt{m^2+8n^2}}{2}-n-2)\frac{\sqrt{m^2+8n^2}-m}{4}-(n-\frac{\sqrt{m^2+8n^2}-m}{4}-1)(n-4)\\
&= 5 n-4-\frac{3(\sqrt{m^2 + 8 n^2}-m)}{2}\\
&=5n-4-\frac{12n^2}{\sqrt{m^2 + 8 n^2}+m}>5n-4-\frac{12n^2}{4n}=2n-4\geq 0,
\end{align*}
where the last two inequalities hold since $m> n\geq 2$. 
\end{proof}

\begin{remark}
In terms of the minimum degree version, the lower bound example is roughly as follows.  While we don't repeat the detailed calculations above, we still think it may be useful to formulate the example in this language for future reference.  Let $n$ and $k$ be positive integers with $k<\frac{n}{3}$ and define $\alpha$ so that $k=\alpha n$, and set $\tau=\frac{1+\sqrt{16\alpha^2+1}}{4}-\alpha$.  Let $X_1, X_2, Y_1, Y_2$ be disjoint sets such that $\tau n-1\leq |X_1|,|X_2|\leq \tau n+1$ and $(\frac{1}{2}-\tau)n-1\leq |Y_1|,|Y_2|\leq  (\frac{1}{2}-\tau)n+1$ and $|X_1|+|X_2|+|Y_1|+|Y_2|=n$.  Add all edges between $X_1$ and $X_2$, between $X_1$ and $Y_2$, and between $X_2$ and $Y_1$.  Additionally, for $i\in [2]$, add edges between $X_i$ and $Y_i$ so that every vertex in $Y_i$ has degree between $k-1$ and $k-C$ to $X_i$ and every vertex in $X_i$ has degree between $k-b-1$ and $k-b-C$ to $Y_i$ (for some constant $C$).  Let $G$ be the resulting graph.  We have that $G$ is a tripartite graph with minimum degree $\left(\frac{1+\sqrt{1+16\alpha^2}}{4}\right)n-\Theta(1)$ such that every vertex has degree at most $k-1$ to one of the other parts, so there is no $F_{k}$.
\end{remark}

\subsection{Upper bound}\label{subsec:starfan_upper}

We begin with the Edmonds-Gallai theorem and some related observations.  Given a graph $G$ and a maximum matching $M$, we define the \emph{deficiency} of $G$ as $\mathrm{def}(G)=|V(G)|-2|M|$ (so the deficiency of $G$ is the number of vertices unsaturated by a maximum matching).

\begin{theorem}[Edmonds \cite{Ed}, Gallai \cite{Gal}]\label{thm:GE}
For every graph $G$ there exists $A\subseteq V(G)$ such that if $C$ is the union of the vertex sets of the even components of $G-A$ and $D_1, \dots, D_p$ are the vertex sets of the odd components of $G-A$, then for every maximum matching $M$ in $G$,
\begin{enumerate}
\item $M\cap E(G[C])$ is a perfect matching of $G[C]$,
\item $M$ matches $A$ to distinct odd components of $G-A$, and
\item for all $i\in [p]$, $M\cap E(G[D_i])$ is a near-perfect matching of $G[D_i]$
\end{enumerate}
and consequently
\begin{enumerate}[resume]
\item $p=|A|+\mathrm{def}(G)$, and
\item $\nu(G)=|A|+\frac{1}{2}(|C|+\sum_{i\in [p]}(|D_i|-1)).$
\end{enumerate}
\end{theorem}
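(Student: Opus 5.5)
The plan is the classical ``Gallai–Edmonds decomposition'' argument: choose $A$ canonically instead of by an extremal Tutte–Berge argument. Let $D$ be the set of vertices of $G$ missed by at least one maximum matching. Let $A$ be the set of vertices not in $D$ that have a neighbour in $D$, and let $C=V(G)\setminus(D\cup A)$. The goal is to show that the components of $G[D]$ are exactly the odd components $D_1,\dots,D_p$ of $G-A$, and that $G[C]$ is the union of the even components. Items 1--3 then follow from three facts:
\begin{itemize}
\item each component of $G[D]$ is factor-critical;
\item $G[C]$ has a perfect matching;
\item every maximum matching sends each vertex of $A$ to a distinct component of $G[D]$.
\end{itemize}
Items 4 and 5 are counting consequences. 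Since every vertex of every $D_i$ is unsaturated or matched inside $D_i$ or into $A$, and no $D_i$ receives two $A$-edges, we get $\mathrm{def}(G)=p-|A|$. The formula for $\nu(G)$ then follows by summing the matching edges in each part.

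The key steps, in order, are as follows.
\begin{enumerate}
\item \emph{Gallai's lemma.} If $H$ is connected and every vertex of $H$ is missed by some maximum matching, then $H$ is factor-critical. The standard proof takes a maximum matching $M$ missing two vertices $u,v$ at minimum distance. It then uses an alternating-path exchange with a maximum matching missing an intermediate vertex on a shortest $u$--$v$ path, which produces a larger matching or a closer pair, a contradiction. So the deficiency of $H$ is $1$.
\item \emph{Stability lemma.} For $a\in A$ we have $D(G-a)=D(G)$, $A(G-a)=A\setminus\{a\}$ and $C(G-a)=C$, with $\nu(G-a)=\nu(G)-1$. To see this, note that $a\notin D$, so every maximum matching covers $a$ and $\nu(G-a)=\nu(G)-1$. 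The equality of the $D$-sets is proved by symmetric-difference arguments between maximum matchings of $G$ and of $G-a$. Iterating over all of $A$ gives $G'=G-A$ with the same $D$ and $C$, and with $\nu(G')=\nu(G)-|A|$.
\item \emph{Structure of $G-A$.} In $G-A$ there are no edges between $D$ and $C$, by the definition of $A$. Hence each component of $G'[D]$ is a component of $G-A$, and every vertex in it is missed by some maximum matching of $G'$. Applying Gallai's lemma to each such component shows it is factor-critical, and in particular odd. The vertices of $C$ are covered by every maximum matching of $G'$ and are not adjacent to $D$. A maximum matching of $G'$ therefore matches $C$ within itself, so $G[C]$ has a perfect matching and its components are even.
\item \emph{Counting.} A maximum matching of $G'$ has deficiency exactly $p$, one vertex per odd component. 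Adding back $A$ raises $\nu$ by $|A|$, which is possible only if the vertices of $A$ are matched to $|A|$ distinct odd components. This same count forces every maximum matching $M$ of $G$ to behave this way: $M$ restricted to each part must be optimal, giving items 1--3.
\end{enumerate}

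The main obstacle is the stability lemma. One must show that deleting a vertex of $A$ neither creates nor destroys vertices missed by maximum matchings. The plan there is as follows. For $x\in D$, take a maximum matching $M$ missing $x$; since $M$ covers $a$, the matching $M-a$ is maximum in $G-a$ and still misses $x$. Conversely, suppose $x$ is missed by a maximum matching $M'$ of $G-a$ but $x\notin D$. Pick $y\in D$ adjacent to $a$ and a maximum matching $N$ of $G$ missing $y$, and analyse the component of $M'\triangle N$ containing $x$ (together with the edge $ay$). This yields either a maximum matching of $G$ missing $x$, or a matching of $G-a$ larger than $\nu(G)-1$. Both are contradictions.
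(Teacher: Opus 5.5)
Your proof is correct: it is the standard self-contained proof of the Gallai--Edmonds structure theorem via Gallai's lemma and the stability lemma (as in Lov\'asz--Plummer), and both exchange sketches go through. In the stability lemma, let $P$ be the path of $M'\triangle N$ that starts at $x$ with an $N$-edge. There are three cases. If $P$ ends at a vertex missed by $N$, then $N\triangle P$ is a maximum matching of $G$ missing $x$. If it ends at a vertex $z\neq a$ missed by $M'$, then $P$ avoids $a$ and augments $M'$ in $G-a$. If it ends at $a$, then $y\notin V(P)$ and $(N\triangle P)+ay$ works.

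The paper gives no proof at all; it simply quotes the theorem from Edmonds and Gallai. So the useful comparison is with what the paper actually needs. Your route proves more than the statement asks for, namely a canonical $A$ and factor-critical odd components. The statement only asserts that some $A$ exists, and the paper only ever uses the component structure of $G-A$, the oddness of the $D_i$, and items 4--5.

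If you take the Tutte--Berge formula as known, that weaker statement has a very short proof. Let $A$ be any set maximizing $o(G-A)-|A|$, so item 4 holds by definition. Now fix a maximum matching $M$. For each component $K$ of $G-A$, the number of vertices of $K$ not matched inside $K$ has the same parity as $|K|$. Each such vertex is either unsaturated or matched into $A$, and at most $|A|$ of them are matched into $A$. Hence $\mathrm{def}(G)\geq p-|A|$, and equality forces items 1--3; item 5 is then arithmetic.

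Your approach has the advantage of not presupposing Tutte--Berge (which it yields as a corollary) and of giving the finer canonical structure. The price is the two exchange lemmas.
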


Given a graph $G$, we call the collection of sets $\{A, C, D_1, \dots, D_p\}$ guaranteed by Theorem \ref{thm:GE} an \emph{Edmonds-Gallai partition}\footnote{Note that strictly speaking $\{A, C, D_1, \dots, D_p\}$ may not be a partition since we could have $A=\emptyset$ or $C=\emptyset$.  Also note that if $p\geq 1$, then $D_i\neq \emptyset$ for all $i\in [p]$.} of $G$.

Tailoring the Edmonds-Gallai theorem to our specific setting (which, for reasons to be explained later, will be a 2-colored complete graph where every vertex has monochromatic degree between $2n$ and $3n$), we obtain the following structural result by applying Theorem \ref{thm:GE} inside the monochromatic neighborhood of any vertex.  

\begin{lemma}[Edmonds-Gallai structure]\label{lem:structure}
Let $G$ be a 2-colored complete graph on $N$ vertices with $2n\leq N< 3n$ and let $\chi\in [2]$.  If $\nu(G_\chi)\leq n-1$ and $\{A, C, D_1, \dots, D_p\}$ is an Edmonds-Gallai partition of $G_\chi$, then 
\begin{enumerate}
\item $|A|+\frac{1}{2}(|C|+\sum_{i\in [p]}(|D_i|-1))=\nu(G_\chi)\leq n-1$,
\item $p=|A|+\mathrm{def}(G_\chi)\geq |A|+N-(2n-2)$, and
\item for all $1\leq i<j\leq p$, every edge between $D_i$ and $D_j$ is color $3-\chi$, and every edge from $D_i$ to $C$ is color $3-\chi$.
\end{enumerate}
\end{lemma}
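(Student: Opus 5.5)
This lemma is essentially a direct specialization of Theorem \ref{thm:GE} applied to the graph $G_\chi$ (the color-$\chi$ subgraph on all $N$ vertices), combined with the fact that $G$ is a complete graph so that every non-edge of $G_\chi$ is an edge of color $3-\chi$. The plan is to apply Theorem \ref{thm:GE} to $G_\chi$, obtain the Edmonds-Gallai partition $\{A, C, D_1,\dots,D_p\}$, and then read off each item.

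For item 1, I would simply quote part 5 of Theorem \ref{thm:GE}, namely $\nu(G_\chi)=|A|+\frac{1}{2}(|C|+\sum_{i\in[p]}(|D_i|-1))$, and append the hypothesis $\nu(G_\chi)\leq n-1$. For item 2, I would quote part 4 of Theorem \ref{thm:GE}, $p=|A|+\mathrm{def}(G_\chi)$. Then I would bound the deficiency using its definition: $\mathrm{def}(G_\chi)=|V(G_\chi)|-2\nu(G_\chi)=N-2\nu(G_\chi)\geq N-2(n-1)=N-(2n-2)$. This gives $p\geq |A|+N-(2n-2)$. The hypothesis $2n\leq N<3n$ is not really needed for these inequalities; it only guarantees $p\geq |A|+2\geq 2$, so that the structure is nontrivial.

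For item 3, I would use the definition of the partition. The sets $D_1,\dots,D_p$ are the vertex sets of distinct components of $G_\chi-A$, and $C$ is the union of the vertex sets of the other (even) components. Hence there is no edge of $G_\chi$ between $D_i$ and $D_j$ for $i\neq j$, nor between $D_i$ and $C$, since any such edge would merge two components of $G_\chi-A$. Because $G$ is a complete graph whose edges are 2-colored, each such pair is joined by an edge of $G$ that is not in $G_\chi$, and that edge must have color $3-\chi$.

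There is no real obstacle here. The only point needing care is making sure $\mathrm{def}$ is computed with respect to $|V(G_\chi)|=N$ (with $G_\chi$ spanning all vertices of $G$), so that the bound in item 2 comes out exactly as $N-(2n-2)$.
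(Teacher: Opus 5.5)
Your proposal is correct and matches the paper's proof, which likewise just reads items 1--3 off Theorem \ref{thm:GE} applied to $G_\chi$. The only computation needed is $\mathrm{def}(G_\chi)=N-2\nu(G_\chi)\geq N-(2n-2)$ for item 2, together with completeness of $G$ for item 3. Your remark that $N\geq 2n$ merely ensures $p\geq |A|+2$ is also accurate.
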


\begin{proof}
The proof follows immediately from Theorem \ref{thm:GE} noting that $\mathrm{def}(G_\chi)=N-2\nu(G_\chi)\geq N-(2n-2)$.
\end{proof}



We now prove the upper bound in Theorem \ref{thm:fanstar}, which says that for all $m> n\geq 2$, $R(K_{1,m}, F_{n})< \frac{3m+\sqrt{m^2+8n^2}}{2}+1$.

\begin{proof}[Proof of Theorem \ref{thm:fanstar} (upper bound)]
Let $m$ and $n$ be integers with $m>n\geq 2$.  Let\footnote{This parametrization allows us to write $m$, and consequently $N$, as a multiple of $n$, but note that $\gamma$ may depend on $n$ if $m=\omega(n)$.} $\gamma=\frac{m}{n}$ and let $N= \ceiling{(\frac{3\gamma+\sqrt{\gamma^2+8}}{2})n}$. Suppose we have a 2-coloring of $K_N$ with no blue copy of $K_{1,\gamma n}$; that is, every vertex has blue degree at most $\gamma n-1$ and consequently, every vertex has red degree at least $N-1-(\gamma n-1)=N-\gamma n$.  We may further suppose that the 2-coloring is maximal with respect to not having a blue copy of $K_{1,\gamma n}$ which implies that 
\begin{equation}\label{eq:blue2}
\text{for all } uv\in E(G_R), d_R(u)=N-\gamma n \text{ or } d_R(v)=N-\gamma n.
\end{equation}
Indeed, if $uv$ is a red edge with $d_R(u)\geq N-\gamma n+1$ and $d_R(v)\geq N-\gamma n+1$, then $u$ and $v$ both have blue degree at most $N-1-(N-\gamma n+1)=\gamma n-2$ and thus we could make $uv$ blue without creating a blue copy of $K_{1,\gamma n}$.  

Now let $G$ be the graph consisting only of the red edges, so $G$ has $N$ vertices and by \eqref{eq:blue2} we have
\begin{equation}\label{eq:N-g}
\delta(G)= N-\gamma n=\ceiling{(\frac{\gamma+\sqrt{\gamma^2+8}}{2})n}. 
\end{equation}
For the rest of this proof, we set $\delta:=\frac{\gamma+\sqrt{\gamma^2+8}}{2}$.  Note that 
\begin{equation}\label{eq:root}
\delta^2-\delta\gamma-2=0.
\end{equation}

Suppose for contradiction that there is no red copy of $F_n$; that is, $G$ contains no copy of $F_n$.  We claim that 
\begin{equation}\label{eq:1+g}
\Delta(G)<(1+\gamma)n.  
\end{equation}
Indeed, if there exists $v\in V(G)$ such that $d(v)\geq (1+\gamma)n>2n$, then $\delta(G[N(v)])\geq d(v)-\gamma n\geq n$ and thus $G[N(v)]$ contains a matching of size at least $n$ which gives a copy of $F_n$.  Define $\Delta$ so that $\Delta(G)=\Delta n$.

We now use Theorem \ref{thm:GE} to establish some structural information about the graph induced by the neighborhood of an arbitrary vertex.  Let $v\in V(G)$ and let $G_v=G[N(v)]$ and note that 
\begin{equation}\label{eq:delta1}
\delta(G_v)\geq |V(G_v)| -\gamma n\geq \delta(G)-\gamma n.  
\end{equation}
We may assume that a largest matching $M_v$ in $G_v$ has size less than $n$ (otherwise $G$ contains a copy of $F_n$) and thus by applying Theorem \ref{thm:GE} to $G_v$, we get an Edmonds-Gallai partition $\{A, C, D_1, \dots, D_{p_v}\}$ with $|D_1|\geq |D_2|\geq \dots \geq |D_{p_v}|$ and
\begin{equation}\label{eq:p}
p_v=\mathrm{def}(G_v)+|A|=|V(G_v)|-2|M_v|+|A|\geq |V(G_v)|-2n+|A|+2
\end{equation}
and
\begin{equation}\label{eq:excess}
|C|+\sum_{i\in [p_v]}(|D_i|-1)=2(|M_v|-|A|)\leq 2(n-1-|A|).
\end{equation}

Now we establish the following claim.

\begin{claim}\label{clm:q=1}
$|D_{p_v}|=1$.
\end{claim}

\begin{proof}
Set $q:=|D_{p_v}|$ and suppose for contradiction that $q\geq 3$.  Note that since $|V(G_v)|=|A|+|C|+\sum_{i\in [p_v]}|D_i|\geq p_vq$, we have $q\leq \frac{|V(G_v)|}{p_v}\stackrel{\eqref{eq:p}}{\leq} \frac{|V(G_v)|}{\mathrm{def}(G_v)}$.  We first show that 
\begin{equation}\label{eq:deltaq}
\delta(G_v)\geq q.
\end{equation}
Suppose to the contrary that $\delta(G_v)\leq q-1\leq \frac{|V(G_v)|}{\mathrm{def}(G_v)}-1$, which rearranges to
\begin{align*}
|V(G_v)|\geq \mathrm{def}(G_v)(\delta(G_v)+1)&\stackrel{\mathclap{\eqref{eq:delta1},\eqref{eq:p}}}{\geq} (|V(G_v)|-2n+2)(|V(G_v)|-\gamma n+1)\\
&= |V(G_v)|+|V(G_v)|(|V(G_v)|-\gamma n)-(2n-2)(|V(G_v)|-\gamma n+1)\\
&\geq  |V(G_v)|+\delta(G)(\delta(G)-\gamma n)-(2n-2)(\Delta(G)-\gamma n+1)\\
&\stackrel{\mathclap{\eqref{eq:root},\eqref{eq:1+g}}}{>}|V(G_v)|+2n^2-(2n-2)(n+1)=|V(G_v)|+2
\end{align*}
a contradiction.   

Note that for all $v\in D_{p_v}$, we have $N_{G_v}(v)\subseteq D_{p_v}\cup A$ and thus 
\begin{align*}
|V(G_v)|\geq p_vq+|A|&\geq q(\mathrm{def}(G_v)+|A|)+|A|\\
&=q\cdot\mathrm{def}(G_v)+(q+1)|A|\\
&\geq q\cdot\mathrm{def}(G_v)+(q+1)(\delta(G_v)-(q-1))=:f(q),
\end{align*}
which is a contradiction provided $f(q)>|V(G_v)|$.  To see this, note that since $f(q)$ is a concave quadratic in $q$, we only need to check $f(q)>|V(G_v)|$ at the endpoints.  If $q = \frac{|V(G_v)|}{\mathrm{def}(G_v)}$, then $f(\frac{|V(G_v)|}{\mathrm{def}(G_v)})=|V(G_v)| + (q+1)(\delta(G_v) - q + 1)\stackrel{\eqref{eq:deltaq}}{>}|V(G_v)|$.  If $q=3$, then 
\begin{align*}
f(3)-|V(G_v)|&=3\mathrm{def}(G_v)+4(\delta(G_v)-2)-|V(G_v)|\\
&\stackrel{\mathclap{\eqref{eq:delta1},\eqref{eq:p}}}{\geq} 3(|V(G_v)|-2n+2)+4(|V(G_v)|-\gamma n-2)-|V(G_v)|\\
&=6|V(G_v)|-6n-4\gamma n-2\\
&\geq 6\delta(G)-6n-4\gamma n-2\\
&\geq 3(\gamma+\sqrt{\gamma^2+8})n-6n-4\gamma n-2\\ 
&=(3\sqrt{\gamma^2+8}-\gamma-6)n-2\\
&> 2n-2 \geq 0 
\end{align*}
where the last two inequalities holds since $\gamma>1$ and $n\geq 2$ (and $3\sqrt{\gamma^2+8}-\gamma$ is increasing).

Thus we have $q<3$ and since $q$ is odd, this implies that $q=1$.  
\end{proof}

Having established these general properties we now proceed with the main part of the argument. Let $v_1\in V(G)$ with $d(v_1)=\Delta(G)$ and let $G_1=G[N(v_1)]$, so 
\begin{equation}\label{eq:V1}
|V(G_1)|= \Delta(G).
\end{equation}
Let $\{A, C, D_1, \dots, D_{p_1}\}$ be the Edmonds-Gallai partition of $G_1$ and note that by Claim \ref{clm:q=1} and \eqref{eq:delta1} we have
\begin{equation}\label{eq:|A|}
n-1\geq |A|\geq \delta(G_1)\geq (\Delta-\gamma)n.
\end{equation}
Set $X_1=C\cup D_1\cup  \dots \cup D_{p_1}$ and note that
\begin{equation}\label{eq:x1-p1}
|X_1|-p_1=|C|+\sum_{i\in [p_1]}(|D_i|-1)\stackrel{\eqref{eq:excess}}{\leq} 2(n-1-|A|).
\end{equation}

Now let $v_2\in D_{p_1}$ (recall that $|D_{p_1}|=1$ by Claim \ref{clm:q=1}) and let $G_2=G[N(v_2)]$.  Note that $N(v_2)\cap X_1=\emptyset$ and by \eqref{eq:blue2} we have 
\begin{equation}\label{eq:V2}
|V(G_2)|= \delta(G).
\end{equation}

Let $\{A', C', D_1', \dots, D_{p_2}'\}$ be the Edmonds-Gallai partition of $G_2$ and again by Claim \ref{clm:q=1} and \eqref{eq:delta1} we have 
\begin{equation}\label{eq:|A'|}
n-1\geq |A'|\geq \delta(G_2)\geq \ceiling{(\delta-\gamma)n}.
\end{equation}
Set $X_2=C'\cup D_1'\cup  \dots \cup D_{p_2}'$ and note that 
\begin{equation}\label{eq:x2-p2}
|X_2|-p_2=|C'|+\sum_{i\in [p_2]}(|D_i'|-1)\stackrel{\eqref{eq:excess}}{\leq} 2(n-1-|A'|).
\end{equation}

Now to complete the proof, we count the edges between $X_1\cup X_2$ and $Y:=V(G)\setminus (X_1\cup X_2)$ which will give us a vertex in $Y$ whose degree is greater than $\Delta(G)$ (that is, we will show $e(X_1\cup X_2, Y)>\Delta(G)|Y|$), a contradiction.

\begin{figure}[ht]
    \centering
    \includegraphics[scale=1]{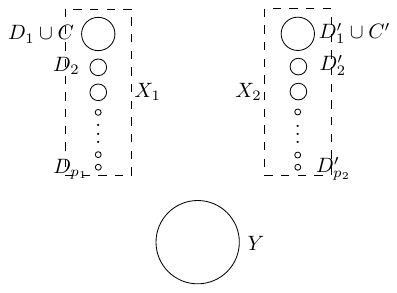}
    \caption{Counting the edges between $X_1\cup X_2$ and $Y=V(G)\setminus (X_1\cup X_2)$}
    \label{fig:starfan}
\end{figure}

\begin{claim}\label{clm:worstcase}
$
e(X_1 \cup X_2, Y) \geq p_1(\delta(G) - |X_2|) - (|X_1| - p_1) + p_2(\delta(G) - |X_1|) - (|X_2| - p_2)
$
\end{claim}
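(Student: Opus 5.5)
The plan is to lower-bound $e(X_1\cup X_2,Y)$ by summing, over one representative vertex from each odd component, that vertex's number of neighbours in $Y$. We do this separately for $X_1$ and for $X_2$. The one structural fact used is part 3 of Lemma~\ref{lem:structure}, read inside $G$ (the red graph): within $G_1$ there are no edges between distinct $D_i, D_j$, and none from any $D_i$ to $C$. Since $X_1\subseteq V(G_1)$, any neighbour of a vertex $u\in D_i$ lying in $X_1$ is a $G_1$-neighbour of $u$, and so lies in $D_i$. Hence $|N(u)\cap X_1|\le |D_i|-1$. The same statement holds for the partition of $G_2$: a vertex $w\in D'_j$ has $|N(w)\cap X_2|\le |D'_j|-1$.

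First we check that the two sides of the count are disjoint. Recall that $v_2\in D_{p_1}$ and $|D_{p_1}|=1$ by Claim~\ref{clm:q=1}, so the only $G_1$-neighbours of $v_2$ lie in $A$. Therefore $N(v_2)\cap X_1=\emptyset$, and since $X_2\subseteq N(v_2)$ we get $X_1\cap X_2=\emptyset$. Thus $Y=V(G)\setminus(X_1\cup X_2)$ is well defined, and every edge from $X_1$ to $Y$ is distinct from every edge from $X_2$ to $Y$. So $e(X_1\cup X_2,Y)=e(X_1,Y)+e(X_2,Y)$, and it suffices to bound each term separately.

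For $X_1$, choose one vertex $u_i\in D_i$ for each $i\in[p_1]$. Each $u_i$ has degree at least $\delta(G)$ in $G$. At most $|X_2|$ of its neighbours lie in $X_2$, and at most $|D_i|-1$ lie in $X_1$. Hence
\[
|N(u_i)\cap Y|\ge \delta(G)-|X_2|-(|D_i|-1).
\]
Summing over $i$, and discarding the nonnegative contributions of all other vertices of $X_1$ (including $C$), gives
\[
e(X_1,Y)\ge p_1(\delta(G)-|X_2|)-\sum_{i}(|D_i|-1)\ge p_1(\delta(G)-|X_2|)-(|X_1|-p_1).
\]
The last step uses~\eqref{eq:x1-p1}, since $\sum_i(|D_i|-1)\le |C|+\sum_i(|D_i|-1)=|X_1|-p_1$. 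The identical argument for representatives $w_j\in D'_j$ gives
\[
e(X_2,Y)\ge p_2(\delta(G)-|X_1|)-(|X_2|-p_2).
\]
Adding the two bounds yields the claim.

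The only delicate points are the two facts established above: that $X_1$ and $X_2$ are disjoint, and that neighbours of $u\in D_i$ inside $X_1$ are confined to $D_i$. Both follow directly from the Edmonds--Gallai structure together with Claim~\ref{clm:q=1}. Note also that $v_1$ and $v_2$ may lie in $Y$, which is harmless, since they are simply counted as vertices of $Y$.
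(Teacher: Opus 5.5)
Your proof is correct. It reaches the same bound as the paper by a simpler count.

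\textbf{How the paper counts.} The paper merges $C$ into $D_1$ and sums over \emph{all} vertices of each $D_i$, using the per-component bound $e(D_i,Y)\ge |D_i|\max\{0,\delta(G)-|X_2|-|D_i|+1\}$. It then needs the ordering $|D_1|\ge\cdots\ge|D_{p_1}|$ and a threshold index $c$. Large components are bounded below by $0$. Small components are bounded below by $\delta(G)-|X_2|$, using $(|D_i|-1)(\delta(G)-|X_2|-|D_i|)\ge 0$. Finally the $c$ large components are absorbed via $c(\delta(G)-|X_2|)\le |X_1|-p_1$.

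\textbf{How you count.} You take one representative per odd component and sum the linear bounds $\delta(G)-|X_2|-(|D_i|-1)$ directly, letting individual terms be negative. This is legitimate because $e(X_1,Y)\ge\sum_i|N(u_i)\cap Y|$ and each termwise inequality holds regardless of sign. It yields exactly the target expression with no case split and no ordering, and $C$ is handled by simply dropping it.

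\textbf{Comparison.} The paper's per-component bound is pointwise stronger, since it counts every vertex of $D_i$. However, its last step discards that extra strength, so nothing is lost by your route.

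\textbf{Two minor remarks.} First, the confinement fact you use (neighbours of $u\in D_i$ inside $N(v_1)\setminus A$ lie in $D_i$) follows immediately from $D_i$ being a component of $G_1-A$. You do not need to invoke Lemma~\ref{lem:structure}, whose stated hypotheses, such as $2n\le N<3n$, are not literally those of the present setting.

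Second, your closing remark is slightly off: $v_2$ cannot lie in $Y$, since $v_2\in D_{p_1}\subseteq X_1$ and is in fact the representative of $D_{p_1}$. Only $v_1$, which lies in $N(v_2)=A'\cup X_2$, can land in $Y$. This does not affect the argument.
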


\begin{proofclaim}
We establish the claim by showing that for all $i\in [2]$, $e(X_i, Y) \geq p_i(\delta(G) - |X_{3-i}|) - (|X_i| - p_i).$  Without loss of generality, suppose $i=1$.  

We note that if there are any even components (that is, $C\neq \emptyset$), then for the purposes of this proof, we can just add these vertices to $D_1$; formally, reset $D_1:=D_1\cup C$.  For all $i\in [p_1]$, since $N_{G}(u)\subseteq D_i\cup X_2\cup Y$ for all $u\in D_i$, we have $$e(D_i, Y)\geq  |D_i| \max\{0, \delta(G) - |X_2| - |D_i| + 1\}.$$  Let $c$ be maximum such that $|D_c|\geq \delta(G)-|X_2|+1$.  For all $i\in [c]$, we have the trivial lower bound $e(D_i, Y) \geq 0$.  For all $i\in [p_1]\setminus [c]$ we have $1 \leq |D_i| \leq \delta(G) - |X_2|$ and thus 
$$|D_i|(\delta(G) - |X_2| - |D_i| + 1) - (\delta(G) - |X_2|) = (|D_i| - 1)(\delta(G) - |X_2| - |D_i|) \geq 0,$$ which implies  
\begin{equation}\label{eq:smallD}
e(D_i, Y) \geq \delta(G) - |X_2|. 
\end{equation}

Note that by the definition of $c$ we have 
\begin{equation}\label{eq:c()}
c(\delta(G) - |X_2|) \leq \sum_{i=1}^{p_1} (|D_i| - 1) = |X_1| - p_1. 
\end{equation}
Thus, by \eqref{eq:smallD} the total number of edges from $X_1$ to $Y$ is 
\begin{align*}
e(X_1, Y) \geq (p_1 - c)(\delta(G) - |X_2|) &= p_1(\delta(G) - |X_2|) - c(\delta(G) - |X_2|)\\
&\stackrel{\eqref{eq:c()}}{\geq} p_1(\delta(G) - |X_2|) - (|X_1| - p_1). \qedhere
\end{align*}
\end{proofclaim}

Define $\alpha$ and $\alpha'$ so that $|A|=(1-\alpha)n$ and $|A'|=(1-\alpha')n$.  
Note that by \eqref{eq:|A|} and \eqref{eq:|A'|}, we have that 
\begin{equation}\label{eq:1/n}
\alpha, \alpha'\geq \frac{1}{n}.
\end{equation}
By Claim \ref{clm:worstcase} together with \eqref{eq:p}, \eqref{eq:V1}, \eqref{eq:V2}, \eqref{eq:x1-p1}, \eqref{eq:x2-p2}  we have
\begin{align*}
e(X_1\cup X_2,Y) &\geq p_1(\delta(G)-|X_2|)-(|X_1|-p_1)+p_2(\delta(G)-|X_1|)-(|X_2|-p_2)\\
&\geq ((\Delta-2)n+|A|)(\delta(G)-(\delta(G)-|A'|))-2(\alpha n-1)\\
&\qquad\qquad\qquad\qquad\qquad\qquad +((\delta-2)n+|A'|)(\delta n-(\Delta n-|A|))-2(\alpha' n-1)\\
&=(\Delta-1-\alpha)(1-\alpha')n^2+(\delta-1-\alpha')(1-\alpha-(\Delta-\delta))n^2-2(\alpha n-1)-2(\alpha' n-1).
\end{align*}
We also have
\begin{align*}
|Y|=N-|X_1|-|X_2|&= N-(\Delta(G)-|A|)-(\delta(G)-|A'|)\\
&\stackrel{\eqref{eq:N-g}}{=} N-(\Delta(G)-|A|)-(N-\gamma n-|A'|)\\
&=(2+\gamma-\Delta-\alpha-\alpha')n.
\end{align*}
Putting this all together, we have
\begin{align*}
&e(X_1\cup X_2, Y)-\Delta(G)|Y|\\
&\geq (\Delta-1-\alpha)(1-\alpha')n^2+(\delta-1-\alpha')(1-\alpha-(\Delta-\delta))n^2-2(\alpha n-1)-2(\alpha' n-1) \\
&\quad  -\Delta n(2+\gamma-\Delta-\alpha-\alpha')n\\
&\stackrel{\eqref{eq:root}}{=}(\Delta-\delta)(\Delta-\gamma + \alpha + \alpha')n^2+2(\alpha n - 1)(\alpha'n - 1) + 2\stackrel{\eqref{eq:1/n}}{>} 0,
\end{align*}
and thus $\frac{e(X_1\cup X_2, Y)}{|Y|}>\Delta(G)$, a contradiction.
\end{proof}

\section{Upper bounds on the Ramsey numbers of fans}\label{sec:fan_upper}

In this section we prove the upper bound in Theorem \ref{thm:main}; that is, $R(F_n)\leq (5+o(1))n$.  We begin with a number of lemmas.

\subsection{Lemmas for fans}

In the course of proving $R(F_n)\leq 6n$, Lin and Li \cite{LL} proved that $R(nK_2, F_{n})=3n$.  Despite being used to obtain a weaker upper bound on $R(F_n)$, this result is still useful for us because it implies that in every 2-colored complete graph, if there is a vertex incident with at least $3n$ edges of the same color, then we have a monochromatic copy of $F_n$.

\begin{corollary}\label{cor:LL}
Let $K$ be a 2-colored complete graph.  If there exists $v\in V(K)$ and $i\in [2]$ such that $|N_i(v)|\geq 3n$, then $K$ contains a monochromatic $F_n$.
\end{corollary}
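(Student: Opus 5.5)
The corollary follows from the Lin--Li result $R(nK_2, F_n)=3n$, applied inside the large monochromatic neighborhood of $v$. Suppose without loss of generality that $i$ is red, so $|N_R(v)|\geq 3n$. Choose a subset $S\subseteq N_R(v)$ with $|S|=3n$, and consider the $2$-coloring of the complete graph $K[S]$ inherited from $K$.

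Since $|S|=3n=R(nK_2,F_n)$, this coloring contains either a red copy of $nK_2$ or a blue copy of $F_n$. We take the convention that the first graph corresponds to red and the second to blue. This is harmless, because the Ramsey statement is symmetric under swapping the roles of the two colors once we specify which graph goes with which color.

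In the second case, the blue $F_n$ lies in $S\subseteq V(K)$, so $K$ already contains a monochromatic $F_n$.

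In the first case, we have a red matching $M$ of size $n$ inside $N_R(v)$. Every vertex of $M$ is joined to $v$ by a red edge. Hence $v$ together with $M$ forms a red copy of $F_n$: a vertex joined to a matching of size $n$, with all $3n$ edges red.

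The case where $i$ is blue is identical with the colors swapped. There is no real obstacle here. The only points to check are that the Lin--Li theorem is applied with the colors assigned correctly (matching in the color of the neighborhood, fan in the opposite color), and that $3n$ vertices suffice exactly, both of which are immediate.
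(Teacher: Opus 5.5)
Your proof is correct and is essentially the paper's argument: the paper states the corollary as an immediate consequence of Lin and Li's result $R(nK_2,F_n)=3n$. Applied inside a $3n$-vertex subset of the monochromatic neighborhood of $v$, this gives either a matching of size $n$ in that color, which together with $v$ forms an $F_n$, or an $F_n$ in the other color.
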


The following lemma describes the size of a largest matching in a complete multipartite graph broken into cases based on the size and number of the parts.  Note that given a vertex $v$ in a complete multipartite graph $G$, we have that $G[N(v)]$ is also a complete multipartite graph and thus by extension, the following lemma also tells us the size of a largest fan in a complete multipartite graph.  In fact, we state something more general regarding cycles as this is potentially more applicable for future use.

\begin{theorem}[Bondy \cite{Bon}]\label{thm:bon}
If $G$ is a graph on $n\geq 3$ vertices with $\delta(G)\geq \frac{n}{2}$, then either $G$ is pancyclic or $G$ is isomorphic to $K_{n/2, n/2}$.
\end{theorem}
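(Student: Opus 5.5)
The plan is to reduce to the Hamiltonian case and then run Bondy's counting argument along a Hamiltonian cycle. By Dirac's theorem, $\delta(G)\geq \frac{n}{2}$ gives a Hamiltonian cycle $C=v_1v_2\cdots v_nv_1$, with indices taken modulo $n$. Also $e(G)\geq \frac{n\delta(G)}{2}\geq \frac{n^2}{4}$. So it suffices to prove the following. \emph{A Hamiltonian graph on $n$ vertices with at least $n^2/4$ edges is pancyclic or isomorphic to $K_{n/2,n/2}$.} Cycles of length $n$ are given by $C$, so fix $3\leq \ell\leq n-1$ and suppose $G$ has no cycle of length $\ell$.

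The key step is a local exclusion. Suppose $v_iv_j\in E(G)$ and $v_{i+1}v_{j+\ell-3}\in E(G)$, where the segment $v_j,v_{j+1},\dots,v_{j+\ell-3}$ of $C$ avoids $v_i$ and $v_{i+1}$. Then
$$v_i\,v_j\,v_{j+1}\cdots v_{j+\ell-3}\,v_{i+1}\,v_i$$
is a cycle of length $\ell$. The segment has $\ell-2$ vertices, and the edge $v_{i+1}v_i$ lies on $C$. Hence for each $i$ the shift $j\mapsto j+\ell-3$ maps the admissible part of $N(v_i)$ injectively into the non-neighbourhood of $v_{i+1}$. This yields an inequality of the form $d(v_i)+d(v_{i+1})\leq n+\epsilon_i$. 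Here $\epsilon_i$ accounts for the $O(\ell)$ indices $j$ near $i$ that fall outside the admissible window. For those boundary indices I would use the mirror-image configuration (the segment running on the other side of $i$), and, when $\ell>n/2$, the complementary segment instead. The aim is to make the total error vanish after summing over $i$.

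Summing over $i$ should then give $4e(G)=\sum_i\bigl(d(v_i)+d(v_{i+1})\bigr)\leq n^2$. Combined with $e(G)\geq n^2/4$, this forces equality everywhere, which is the rigidity needed for the structural conclusion. Equality means $d(v_i)+d(v_{i+1})=n$ for all $i$, so every vertex has degree exactly $n/2$ and $n$ is even. It also means each shifted neighbourhood is exactly the non-neighbourhood of the next vertex. Taking $\ell=3$ gives a shift of $0$. In that case there is a cleaner route: a triangle-free graph with $e\geq n^2/4$ is $K_{n/2,n/2}$ by the equality case of Mantel's theorem. For general $\ell$, I would propagate the equality conditions around $C$ to show that $N(v_{i+1})$ is the complement of $N(v_i)$ for consecutive vertices and that $N(v_{i+2})=N(v_i)$. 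This says $G$ is bipartite with classes given by the parity of the index, each class of size $n/2$. Since every degree equals $n/2$, $G=K_{n/2,n/2}$.

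The main obstacle is bookkeeping the boundary error terms $\epsilon_i$, that is, the neighbours $v_j$ of $v_i$ too close to $i$ for the segment to be disjoint from $\{v_i,v_{i+1}\}$. Bondy handles this by a careful choice of which of the two arcs of $C$ to use. My first attempt would be to split into the cases $\ell\leq \frac{n+1}{2}$ and $\ell>\frac{n+1}{2}$. In the second case I would work with cycles using the long arc, so the length becomes $n-(\text{gap})$, and check that the excluded windows cancel in the sum. If that fails, I would fall back on the standard induction: exhibit cycles of length $\ell$ by shortening a cycle of length $\ell+1$, using the high minimum degree directly.
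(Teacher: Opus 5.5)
The paper gives no proof of this statement. It is quoted from Bondy and used only as a black box in Lemma~\ref{lem:partite_matching}, so your proposal has to stand on its own. Your outline is the right one: Dirac, then a degree-sum bound for consecutive vertices of $C$, then the equality analysis. But both substantive steps are left undone.

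\textbf{The counting inequality.} Your exclusion is valid, but it only covers offsets $r=j-i\in[2,n-\ell+2]$. That leaves the $\ell-3$ offsets $[n-\ell+3,n-1]$ unaccounted for, which is not a bounded error.

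The patch you propose for $\ell\le\frac{n+1}{2}$ is to use the mirror configuration on the leftovers. This need not give an injection. The always-present neighbour $v_{i-1}$ (offset $n-1$) is sent by the mirror shift $r\mapsto r-\ell+3$ to offset $n-\ell+2$. Your shift $r\mapsto r+\ell-3$ also hits $n-\ell+2$, coming from $r=n-2\ell+5$.

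The fix is to drop your configuration entirely and use, for every $\ell$, the pairing you only suggest for large $\ell$. Take offsets relative to $v_i$ and set
\begin{itemize}
\item $\phi(1)=1$;
\item $\phi(r)=r-\ell+3$ for $\ell-1\le r\le n-1$;
\item $\phi(r)=r+n+1-\ell$ for $2\le r\le \ell-2$.
\end{itemize}
This is a bijection of $\{1,\dots,n-1\}$, since the images are $\{1\}$, $[2,n+2-\ell]$ and $[n+3-\ell,n-1]$. If $v_iv_{i+r}$ and $v_{i+1}v_{i+\phi(r)}$ were both edges, then
\[ v_i\,v_{i+r}\,v_{i+r-1}\cdots v_{i+r-\ell+3}\,v_{i+1}\,v_i \quad (\ell-1\le r\le n-1), \]
\[ v_i\,v_{i+r}\,v_{i+r-1}\cdots v_{i+1}\,v_{i+r+n+1-\ell}\cdots v_{i-1}\,v_i \quad (2\le r\le \ell-2) \]
would be an $\ell$-cycle. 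Every non-neighbour of $v_{i+1}$, including $v_{i+1}$ itself, has offset in $\{1,\dots,n-1\}$. Hence $d(v_i)+d(v_{i+1})\le n$ exactly, with no error terms and no summation.

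With $\delta(G)\ge\frac{n}{2}$ this forces $G$ to be $\frac{n}{2}$-regular. Keep the minimum-degree hypothesis here: from $e(G)\ge n^2/4$ and equality you only get $d(v_{i+2})=d(v_i)$, not $d(v_i)=\frac{n}{2}$.

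\textbf{The equality case.} This step is not routine, and ``propagate the equality conditions around $C$'' is not yet an argument. Tightness says $v_iv_{i+r}\in E\iff v_{i+1}v_{i+\phi(r)}\notin E$. Measured from $v_{i+1}$, this shifts the offset by $-(\ell-2)$ on $[\ell-1,n-1]$ and by $n-\ell$ on $[2,\ell-2]$. So there is no single shift to iterate, and nothing you wrote gets from this piecewise system to $N(v_{i+2})=N(v_i)$.

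Only for $\ell=n-1$ does it collapse to a single shift. Then there are no chords $v_iv_{i+2}$, and $v_iv_{i+r}\in E\iff v_{i+1}v_{i+r+2}\notin E$ for $2\le r\le n-3$. This propagates to $v_iv_{i+r}\in E\iff r$ odd, that is, $G=K_{n/2,n/2}$.

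For $4\le\ell\le n-2$ you still need a genuine argument. It could use the symmetry $v_iv_{i+r}=v_{i+r}v_{(i+r)+(n-r)}$, regularity, and possibly forbidden configurations that $\phi$ does not use, such as your original one.

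Two smaller points. The Mantel remark only settles the case where the missing length is $3$. The fallback of shortening an $(\ell+1)$-cycle does not follow from the degree condition, since a vertex on a short cycle may have all its neighbours off that cycle.
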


\begin{lemma}[Matchings/cycles in complete multipartite graphs]\label{lem:partite_matching}
Let $G$ be a complete multipartite graph on $N$ vertices with parts $V_1, \dots, V_t$ such that $|V_1|\leq \dots \leq |V_t|$.
\begin{enumerate}
\item If $t\geq 3$ and $|V_t|\leq \frac{N}{2}$, then $G$ is pancyclic.  In particular, $G$ has a matching covering $2\floor{\frac{N}{2}}\geq N-1$ vertices of $G$.
\item If $t\geq 3$ and $|V_t|>\frac{N}{2}$, then $G$ has a cycle of length $\ell$ for all $3\leq \ell\leq 2(N-|V_t|)$.  In particular, $G$ has a matching covering $2(N-|V_t|)$ vertices of $G$.
\item If $t=2$, then $G$ has a cycle of length $2\ell$ for all $4\leq 2\ell\leq 2|V_1|$.  In particular, $G$ has a matching covering $2|V_1|$ vertices of $G$.
\end{enumerate}
\end{lemma}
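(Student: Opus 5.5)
The plan is to deduce all three parts from Bondy's theorem (Theorem \ref{thm:bon}), applied either to $G$ itself or to a suitable induced subgraph. In each case we pick an induced subgraph that is again complete multipartite and has minimum degree at least half its order.

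\textbf{Part 1.} Suppose $t\ge 3$ and $|V_t|\le N/2$. A vertex in $V_i$ has degree $N-|V_i|\ge N-|V_t|\ge N/2$, so $\delta(G)\ge N/2$. Also $N\ge 3$, since $t\ge 3$ and every part is nonempty. Bondy's theorem then says $G$ is pancyclic or $G\cong K_{N/2,N/2}$. The second option is impossible: $K_{N/2,N/2}$ is a complete multipartite graph with exactly two parts, and the partition of a complete multipartite graph into maximal independent sets is unique, so a graph with $t\ge3$ nonempty parts cannot be complete bipartite. Hence $G$ is pancyclic. Taking a Hamilton cycle (if $N$ is even) or a cycle of length $N-1$ (if $N$ is odd) and keeping alternate edges gives a matching covering $2\lfloor N/2\rfloor$ vertices.

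\textbf{Part 2.} Suppose $t\ge3$ and $|V_t|>N/2$. Set $s=N-|V_t|$, and let $H$ be the subgraph induced by $V_1\cup\dots\cup V_{t-1}$ together with any $s$ vertices of $V_t$. Then $|V(H)|=2s$, and $H$ is complete multipartite with at least three parts, the largest of size exactly $s=|V(H)|/2$. Since $t\ge3$ gives $s\ge 2$, we have $|V(H)|\ge 4\ge3$. Part 1 applied to $H$ shows $H$ is pancyclic, so $G$ has cycles of every length $3\le \ell\le 2s$. Alternate edges of the Hamilton cycle of $H$ give a matching covering $2s$ vertices.

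\textbf{Part 3.} Suppose $t=2$, so $G=K_{|V_1|,|V_2|}$. For $2\le \ell\le |V_1|$, pick $\ell$ vertices from each side. They induce $K_{\ell,\ell}$, which contains a Hamilton cycle of length $2\ell$ (alternate between the sides). This can be done directly without Bondy's theorem. When $\ell=|V_1|$, alternate edges of that cycle give a matching of size $|V_1|$. The degenerate case $|V_1|=1$ gives only the single-edge matching, which is trivial.

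The only delicate step is ruling out the $K_{N/2,N/2}$ exception in part 1. The key observation is that the nonadjacency relation in a complete multipartite graph is an equivalence relation whose classes are exactly the parts, so the number of parts is an invariant of the graph. Everything else is routine bookkeeping.
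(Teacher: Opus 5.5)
Your proposal is correct and follows the paper's proof: Bondy's theorem for (i), where the paper excludes $K_{N/2,N/2}$ simply by noting that $G$ contains a triangle (quicker than your uniqueness-of-parts argument, though yours is also valid); restriction to $V_1\cup\dots\cup V_{t-1}$ plus $N-|V_t|$ vertices of $V_t$ for (ii); and direct inspection of the complete bipartite graph for (iii). One trivial nit: for $N=3$ there is no cycle of length $N-1$, so simply take $\lfloor N/2\rfloor$ alternate edges of the Hamilton cycle for every $N\geq 3$.
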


\begin{proof}
(i) Since $t\geq 3$, we have that $G$ is not bipartite and since $|V_t|\leq \frac{N}{2}$, we have $\delta(G)\geq N-|V_t|\geq \frac{N}{2}$.  So by Theorem \ref{thm:bon}, $G$ is pancyclic.

(ii) Let $V_t'\subseteq V_t$ with $|V_t'|=\sum_{i=1}^{t-1}|V_i|=N-|V_t|$ and then apply (i) to $G'=G[V_1\cup \dots \cup V_{t-1}\cup V_t']$ (note that $|V(G')|=|V_1|+\dots +|V_{t-1}|+|V_t'|=2(N-|V_t|)$).

(iii) This follows directly since $G$ is a complete bipartite graph.
\end{proof}

Throughout the proof of Theorem \ref{thm:main} we will be in a situation where we have a monochromatic complete multipartite graph which already contains a large fan, but not quite large enough.  To obtain a fan of the desired size, it suffices to find a vertex in the multipartite graph whose neighborhood contains a sufficiently large matching extending into the rest of the graph.  The following technical lemma describes various numerical conditions which need to be satisfied in order to get a fan of the desired size.

\begin{lemma}[Fan extension lemma]\label{lem:main}
Let $n$ be a positive integer and let $\lambda$ be a real number with $\lambda\geq 1$. Let $G$ be a graph and let $\{X_1, \dots, X_p,Y,Z\}$ be a partition of $V(G)$ such that $G[X_1\cup \dots \cup X_p\cup Y]$ is a complete multipartite graph. Set $X:=X_1\cup \dots \cup X_p$ and $q:=2n-(|X|+|Y|)$.  Finally, suppose that $|X|+|Y|>n$ and $|X_i|\leq \lambda$ for all $i\in [p]$. 
\begin{enumerate}
\item If $|X|> n+\lambda$ and there exists $v\in X$ such that there is a matching in $G[N(v)\cap Z, X\cup Y]\cup G[N(v)\cap Z]$ covering more than $q+2\lambda$ vertices of $Z$, then $G$ contains a copy of $F_n$.

\item If $|Y|\leq n$ and there exists $v\in X$ such that there is a matching in $G[N(v)\cap Z, Y]\cup G[N(v)\cap Z]$ covering more than $2(q+\lambda)$ vertices of $Y\cup Z$, then $G$ contains a copy of $F_n$.

\item If $|Y|\geq n$ and there exists $v\in X$ such that there is a matching  in $G[N(v)\cap Z, Y]\cup G[N(v)\cap Z]$ covering at least $2(n-|X|+\lambda)$ vertices of $Y\cup Z$, then $G$ contains a copy of $F_n$.
\end{enumerate}
\end{lemma}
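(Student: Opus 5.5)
In all three parts I would take $v$ to be the hub of the fan and assemble a matching of size $n$ inside $N(v)$ from two pieces. The first piece is the given matching $M$, which touches $Z$. The second piece is a matching found inside the complete multipartite graph $H:=G[N(v)\cap(X\cup Y)\setminus V(M)]$. Since $v\in X_j$ for some $j$, $N(v)\supseteq (X\cup Y)\setminus X_j$, so $v$ sees all of $X\cup Y$ except at most $\lambda$ vertices. The graph $H$ is again complete multipartite, with parts the $X_i$ ($i\ne j$) and $Y$, each minus the vertices used by $M$. Its maximum matching size is then read off from Lemma~\ref{lem:partite_matching}: it is $\lfloor |V(H)|/2\rfloor$ unless one part, say $P$, exceeds half of $V(H)$, in which case it is $|V(H)|-|P|$.

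\textbf{Part (i).} The matching $M$ covers more than $q+2\lambda$ vertices of $Z$, and each edge of $M$ uses at most one vertex of $X\cup Y$. I first trim $M$ to exactly $s:=\lceil q+2\lambda\rceil$ (or $s=\lfloor q+2\lambda\rfloor+1$) edges or $Z$-vertices. The number of matching edges contributed by $M$ is then at least $s/2$, and $M$ removes at most $s$ vertices from $X\cup Y$. The remaining set $V(H)$ has size at least $|X|+|Y|-\lambda-s$.

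If no part of $H$ is too large, $H$ contributes about $(|X|+|Y|-\lambda-s)/2$ edges. The total is then roughly $(|X|+|Y|-\lambda)/2$, which is short of $n$. So the bookkeeping must be done more carefully. An edge of $M$ lying inside $Z$ uses no vertex of $X\cup Y$. An edge of $M$ between $Z$ and $X\cup Y$ uses one vertex of $X\cup Y$ while covering only one vertex of $Z$.

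Write $a$ for the number of $M$-edges inside $Z$ and $b$ for the number from $Z$ to $X\cup Y$, so $2a+b>q+2\lambda$. The total matching is then
\[
a+b+\tfrac12\bigl(|X|+|Y|-\lambda-b\bigr)=\tfrac12\bigl(2a+b+|X|+|Y|-\lambda\bigr)>\tfrac12\bigl(q+\lambda+|X|+|Y|\bigr)=n+\lambda/2.
\]
This leaves slack of about $\lambda/2$ to absorb parity losses. I would remove $M$-edges until $2a+b$ is just above the threshold, so that $M$ does not eat up too much of $X\cup Y$.

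The large-part issue also has to be handled. The only part that can dominate $H$ is $Y$ (each $X_i$ has size at most $\lambda$), or a single $X_i$ when $Y$ is small. The hypothesis $|X|>n+\lambda$ ensures that the $X$-parts other than $X_j$ have size greater than $n$. So even if $Y$ is large, the complement of $Y$ in $H$ is big enough, and the matching number is at least the required amount. If needed, I would choose the $M$-edges so that their $X\cup Y$ endpoints lie in the largest part.

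\textbf{Parts (ii) and (iii).} Here $M$ meets $X\cup Y$ only in $Y$, so $X\setminus X_j$ is untouched. In (ii), with $|Y|\le n$, the bound $2(q+\lambda)$ is counted over $Y\cup Z$. After $M$ is removed, $Y$ shrinks, so the relevant part sizes only become more balanced. The count again gives $|M|+\nu(H)\ge \tfrac12(2(q+\lambda)+|X|+|Y|-\lambda-\text{(used)})$, and I would track it as above. In (iii), $Y$ is the dominant part, so $\nu(H)\ge |X|-\lambda$, since we can match $X\setminus X_j$ into the unused part of $Y$. This needs $|Y|-|V(M)\cap Y|\ge |X|-\lambda$, which follows from $|Y|\ge n$ once $M$ is trimmed to about $n-|X|+\lambda$ edges. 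Then $|M|+\nu(H)\ge (n-|X|+\lambda)+(|X|-\lambda)=n$.

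The main obstacle is the case analysis in Lemma~\ref{lem:partite_matching} after deleting $V(M)$, in particular guaranteeing that there are at least three parts, or handling the bipartite case, together with the integrality and parity losses. The slack of $\lambda$ built into the hypotheses is presumably exactly what absorbs these losses.
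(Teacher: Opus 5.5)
Your plan matches the paper's. You make $v\in X_j$ the center, keep part of the given matching $M$, and finish with Lemma~\ref{lem:partite_matching} in the leftover complete multipartite graph $H$. Your part (iii) is exactly the paper's argument, with $|X_j|$ in place of $\lambda$.

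\textbf{Edges of $M$ into $X_j$.} In (i) you never discard the edges of $M$ that end in $X_j$, possibly at $v$ itself. Those endpoints are not in $N(v)$, so your displayed count is not the size of a matching in $G[N(v)]$. The paper works in $G_v-X_j$ from the start. Dropping these at most $|X_j|\le\lambda$ edges costs at most $\lambda$ covered vertices of $Z$, and that is what the second $\lambda$ in $q+2\lambda$ pays for; the first pays for $X_j\not\subseteq N(v)$. So there is no $\lambda/2$ to spare, and the single vertex lost to parity is absorbed by the strict inequality. After discarding, the surviving edges still cover $z>q+2\lambda-|X_j|\ge q+|X_j|$ vertices of $Z$, i.e.\ $z\ge q+|X_j|+1$. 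If no part of $H$ exceeds half of $|V(H)|$, then at least $z+|X|+|Y|-|X_j|-1\ge 2n$ vertices of $N(v)$ are covered. With this fix, your handling of a dominant $Y$ in (i) is a valid alternative to the paper's trimming: if $Y\setminus V(M)$ is at least half of $H$, then $M$ plus a matching of $(X\setminus X_j)\setminus V(M)$ into $Y$ has at least $|X\setminus X_j|>n$ edges. In (ii), ``$Y$ shrinks, so the parts become more balanced'' is not an argument, since $Y\setminus V(M)$ can still be the majority part. The paper trims $M$ to $k=\max\{0,q+|X_j|+1\}$ edges and discards a set $U'\supseteq V(M)\cap Y$ of $\min\{k,|Y|\}$ vertices of $Y$, using $|Y|\le n$ to get $|Y\setminus U'|\le|X\setminus X_j|$. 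Alternatively, with $M$ untrimmed and $Y\setminus V(M)$ dominant, $|M|+|X\setminus X_j|>2n-|Y|\ge n$.

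\textbf{A dominant part $X_i$.} The step you single out as the main obstacle, excluding that a single $X_i$ is more than half of $H$, is a genuine gap that the slack does not close. Steering the endpoints of $M$ into a largest part is not available, because $M$ is dictated by $G$. In fact (i) and (ii) are false as stated in this regime.
\begin{itemize}
\item For (i), let $n=9$, $\lambda=4$, $Y=\emptyset$, and $G[X]=K_{4,4,4,4}$ with parts $X_1,\dots,X_4$ and $v\in X_4$. Let $Z=\{z_1,\dots,z_{11}\}$, where $z_k$ is adjacent only to $v$ and to $p_k$; here $p_1,\dots,p_8$ enumerate $X_1\cup X_2$ and $p_9,p_{10},p_{11}$ enumerate $X_4\setminus\{v\}$. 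Then $|X|=16>n+\lambda$, $q=2$, and $\{z_kp_k\}$ covers $11>q+2\lambda$ vertices of $Z$. Yet $v$ is the only vertex of degree at least $2n$, and $X_1\cup X_2$ meets every edge of $G[N(v)]$. So $\nu(G[N(v)])\le 8$ and $G$ contains no $F_9$.
\item For (ii), take $n=5$, $\lambda=3$, $G[X\cup Y]=K_{3,3,4}$ with parts $X_1,X_2,Y$, and $v\in X_1$. Add four vertices of $Z$, each adjacent only to $v$ and to a distinct vertex of $Y$. Here $q=0$ and the matching covers $8>2(q+\lambda)$ vertices. But again $v$ is the only vertex of degree at least $2n$, and $Y$ meets every edge of $G[N(v)]$.
\end{itemize}
The paper's proof has the same hole: it invokes Lemma~\ref{lem:partite_matching}(i) on the leftover graph after checking only that $Y$ is not too large. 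This is harmless in the paper's applications, where $\lambda=O(1/\ep)$ and the leftover graph has linear order. Adding a hypothesis such as $2|X|+|Y|\ge 2n+4\lambda+1$ to (i) (compare $|V_0|>n+2\ell$ in Lemma~\ref{lem:cover_gen_partite}) and $|X|\ge 3\lambda$ to (ii) makes the argument go through, yours or the paper's.
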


\begin{figure}[ht]
     \centering
     \begin{subfigure}[b]{0.3\textwidth}
         \centering
         \includegraphics[width=\textwidth]{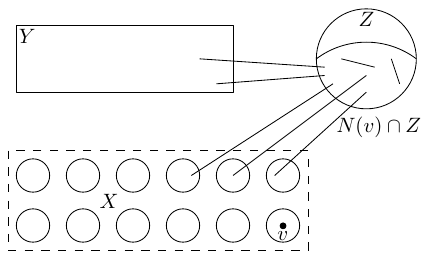}
         \caption{Case (i)}
         \label{fig:1a}
     \end{subfigure}
     \hfill
     \begin{subfigure}[b]{0.3\textwidth}
         \centering
         \includegraphics[width=\textwidth]{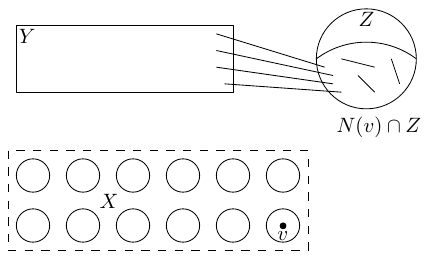}
         \caption{Case (ii)}
         \label{fig:1b}
     \end{subfigure}
     \hfill
     \begin{subfigure}[b]{0.3\textwidth}
         \centering
         \includegraphics[width=\textwidth]{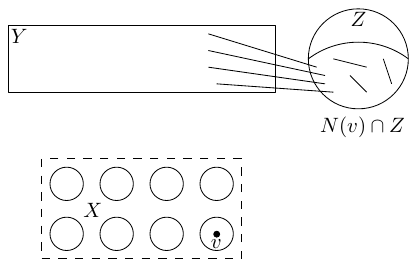}
         \caption{Case (iii)}
         \label{fig:1c}
     \end{subfigure}
        \caption{Fan extension lemma}
        \label{fig:1}
\end{figure}

\begin{proof} 
(i) Let $i\in [p]$ and $v\in X_i$ such that there is a matching in $G_v:=G[N(v)\cap Z, X\cup Y]\cup G[N(v)\cap Z]$ covering more than $q+2\lambda$ vertices of $Z$. First suppose $q \geq 0$ (meaning $|X|+|Y|\leq 2n$). Let $M$ be a matching in $G_v-X_i$ covering at least $q+|X_i|+1$ vertices of $Z$ and at most $q+|X_i|+1$ vertices of $X\cup Y$ (this is possible since $\lceil 2\lambda\rceil\geq 2\lambda\geq 2\lfloor\lambda\rfloor\geq 2|X_i|$), and let $U=V(M)\cap (X\cup Y)$. Note that by the choice of $M$, $U\cap X_i=\emptyset$. By the case assumption and the fact that $|X_i|\leq \lambda$, we have $|X|-|X_i|-|U|\geq |X|-|X_i|-(q+|X_i|+1)=2|X|-2n-2|X_i|-1+|Y|\geq |Y|$. So $G[Y\cup (X\setminus X_i)\setminus U]$ induces a complete multipartite graph where $|(X\setminus X_i)\setminus U|\geq |Y|$. Thus by Lemma \ref{lem:partite_matching}(i) we have a matching in $G[N(v)]$ covering at least $|X|+|Y|-|X_i|-1+q+|X_i|+1= 2n$ vertices.

Suppose instead that $q < 0$ (meaning $|X|+|Y|>2n$). Since $|X|> n+\lambda$, we have the desired fan entirely within $X\cup Y$ regardless of the size of $Y$ by Lemma \ref{lem:partite_matching}(i) or (ii).

(ii) Let $i\in [p]$ and $v\in X_i$ such that there is a matching in $G_v:=G[N(v)\cap Z, Y]\cup G[N(v)\cap Z]$ covering more than $2(q+\lambda)$ vertices of $Y\cup Z$. Let $k := \max\{0, q+|X_i|+1\}$. Let $M$ be a matching in $G_v$ of size exactly $k$, and let $U=V(M)\cap Y$. If $|Y|\geq k$, then define a set $U'$ such that $U\subseteq U'\subseteq Y$ and $|U'|=k$. Note that in this case, since $k \geq q+|X_i|+1$ and $|Y|\leq n$, we have:$$|Y|-|U'| = |Y|-k \leq |Y|-(q+|X_i|+1) = 2|Y|-2n+|X|-|X_i|-1 \leq |X|-|X_i|$$If $|Y|<k$, then set $U'=Y$. Either way, we have that $G[(Y\setminus U')\cup (X\setminus X_i)]$ induces a complete multipartite graph where $|Y|-|U'|\leq |X|-|X_i|$. Thus by Lemma \ref{lem:partite_matching}(i), we have a matching in $G[N(v)]$ covering at least $$k+|X|+|Y|-|X_i|-1\geq q+|X_i|+1+|X|+|Y|-|X_i|-1=2n$$ vertices.

(iii) Let $i\in [p]$ and $v\in X_i$ such that there is a matching in $G_v:=G[N(v)\cap Z, Y]\cup G[N(v)\cap Z]$ covering at least $2(n-|X|+\lambda)$ vertices of $Y\cup Z$.  Let $M$ be a matching in $G_v$ of size exactly $n-|X|+|X_i|$ and let $U=V(M)\cap Y$.  Note that since $|Y|\geq n$, we have $$|Y|-|U|\geq |Y|-(n-|X|+|X_i|)=|X|+|Y|-n-|X_i|\geq |X|-|X_i|.$$  So $G[(Y\setminus U)\cup (X\setminus X_i)]$ induces a complete multipartite graph where $|Y|-|U|\geq |X|-|X_i|$.  Thus by Lemma \ref{lem:partite_matching}(ii) we have a matching in $G[N(v)]$ covering at least $2(n-|X|+|X_i|)+2(|X|-|X_i|)=2n$ vertices.
\end{proof}

We will use the following variant of K\"onig's theorem (see \cite{Aha} or \cite{D6}).

\begin{theorem}[K\"onig]\label{thm:konig}
For every bipartite graph $G$ and every maximum matching $M$ in $G$, there exists a minimum vertex cover $Q$ of $G$ such that for all $e\in M$, $|e\cap Q|=1$.  
\end{theorem}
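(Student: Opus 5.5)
Fix a bipartite graph $G$ with parts $U,W$ and a maximum matching $M$. By the usual K\"onig theorem, $\nu(G)=\tau(G)$, so any minimum vertex cover $Q$ has $|Q|=|M|$. Every edge of $M$ must be covered, and the edges of $M$ are pairwise disjoint, so $Q$ contains at least one endpoint of each $e\in M$. Counting gives $|M|=|Q|\geq \sum_{e\in M}|e\cap Q|\geq |M|$. Hence equality holds throughout: every edge of $M$ meets $Q$ in exactly one vertex, and $Q\subseteq V(M)$. So \emph{every} minimum vertex cover has the stated property, and existence follows from K\"onig's theorem.

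If we want to avoid quoting K\"onig's theorem and build $Q$ directly, I would use the alternating-path construction. Let $U_0$ be the vertices of $U$ not covered by $M$. Let $Z$ be the set of vertices reachable from $U_0$ by $M$-alternating paths that start with a non-matching edge. Set $Q:=(U\setminus Z)\cup(W\cap Z)$. Then one checks three things.
\begin{enumerate}
\item Every vertex of $W\cap Z$ is matched, since otherwise there would be an augmenting path.
\item Every vertex of $U\setminus Z$ is matched, because $U_0\subseteq Z$.
\item For $e=uw\in M$, we have $w\in Z$ if and only if $u\in Z$, since the path to $w$ continues along $e$ and the path to $u$ must arrive through $e$. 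So exactly one of $u,w$ lies in $Q$.
\end{enumerate}
Finally, $Q$ is a cover. An edge $uw$ with $u\in Z$ and $w\notin Z$ is impossible: if $uw\notin M$, the alternating path to $u$ extends to $w$; if $uw\in M$, then $w\in Z$ by point 3. Hence $|Q|=|M|$, and $Q$ is minimum because $\tau\geq\nu$.

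The only delicate point is the equivalence in point 3 for matched vertices of $U$: a vertex $u\in Z$ with $u\notin U_0$ is reached along its own matching edge, so its partner $w$ is in $Z$. Conversely, $w\in Z$ lets us append the edge $e$ to reach $u$. Everything else is routine.
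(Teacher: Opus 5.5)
Your proof is correct. The paper itself does not prove this statement. It quotes it as a known variant of K\"onig's theorem with a pointer to \cite{Aha} and \cite{D6}, so there is no in-paper argument to compare against.

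Your first paragraph is already a complete proof in the finite setting the paper works in. Since $|Q|=\tau(G)=\nu(G)=|M|$ and the edges of $M$ are pairwise disjoint, the chain $|M|=|Q|\geq|Q\cap V(M)|=\sum_{e\in M}|e\cap Q|\geq|M|$ forces $|e\cap Q|=1$ for all $e\in M$ and $Q\subseteq V(M)$. This shows that \emph{every} minimum vertex cover has the property, which is slightly stronger than the existence claim as stated.

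Your second construction is the standard alternating-path proof of K\"onig's theorem. From each edge of $M$ you take its end in $W$ if that end is reachable by an alternating path from an unmatched vertex of $U$, and its end in $U$ otherwise. Your checks of points 1--3 and of the covering property are sound, including the delicate direction in point 3 (a matched $u\in Z$ must be reached through its own matching edge).

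The two routes trade off as follows. The constructive one is self-contained: it proves $\nu=\tau$ and the one-endpoint property in a single stroke. The counting one is shorter and gives the stronger ``every minimum cover'' conclusion, at the cost of quoting the classical theorem.
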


We now state a lemma of Chen, Yu, and Zhao \cite[Lemma 1.3]{CYZ} for the main purpose of explaining the generalization which follows.

\begin{lemma}[Chen, Yu, Zhao \cite{CYZ}]\label{lem:CYZ}
Let $n$ be an integer with $n\geq 2$, let $K$ be a 2-colored complete graph, and let $V_0$ be a red clique in $K$ with $|V_0|\geq \frac{3n}{2}+1$.  If every vertex in $V_0$ has red degree at least $dn\geq \frac{5n}{2}$, then $\bigcup_{v\in V_0}N_R(v)$ contains a monochromatic $F_{n}$ with center in $V_0$.
\end{lemma}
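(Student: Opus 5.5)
The plan is to look for a red fan centered at some $v\in V_0$, and to show that if every such red fan is too small, the red sparsity this forces gives a blue fan centered at a vertex of $V_0$.

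\textbf{Setting up one vertex.} Fix $v\in V_0$ and split its red neighborhood as $N_R(v)=(V_0\setminus\{v\})\cup W_v$, where $W_v:=N_R(v)\setminus V_0$. Then $|V_0\setminus\{v\}|\geq \frac{3n}{2}$ and $|W_v|\geq dn-|V_0|+1$. Whenever $|V_0|\leq \frac{3n}{2}+O(1)$ this gives $|W_v|\geq n-O(1)$. (If $V_0$ is much larger, it already contains a red matching of size $n$ by itself and we are done.) We may assume $\nu(G_R[N_R(v)])\leq n-1$, since otherwise $v$ is the center of a red $F_n$.

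\textbf{Red matchings into $W_v$.} Let $H_v$ be the red bipartite graph between $V_0\setminus\{v\}$ and $W_v$, and let $M_v$ be a maximum matching in $H_v$, of size $s_v$. Because $V_0\setminus\{v\}$ is a red clique, the vertices of $V_0\setminus\{v\}$ not covered by $M_v$ can be paired up among themselves. This gives a red matching in $N_R(v)$ of size about
\[
s_v+\tfrac12\bigl(\tfrac{3n}{2}-s_v\bigr)=\tfrac{3n}{4}+\tfrac{s_v}{2}.
\]
So $\nu(G_R[N_R(v)])\leq n-1$ forces $s_v<\frac n2$. The same count also bounds the red matching number inside $W_v$, since red edges inside $W_v$ can be added to this matching as well.

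Next apply Theorem \ref{thm:konig} to $H_v$. It gives a vertex cover $Q_v$ with $|Q_v|=s_v<\frac n2$, meeting each edge of $M_v$ exactly once. Every edge between $(V_0\setminus\{v\})\setminus Q_v$ and $W_v\setminus Q_v$ is then blue. Consequently each $u\in V_0\setminus Q_v$ has at least $|W_v|-s_v$ blue neighbors in $W_v$. Moreover, because $G_R[W_v]$ has small matching number, the set $W_v\setminus Q_v$ contains a large blue structure. Concretely, applying Theorem \ref{thm:GE} to $G_R[W_v]$ shows that the blue graph on $W_v\setminus Q_v$ is nearly complete multipartite, and Lemma \ref{lem:partite_matching} then gives it a blue matching of almost full size.

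\textbf{Combining two vertices (the main obstacle).} One vertex $v$ alone only provides about $n$ blue neighbors for $u$, while a blue $F_n$ needs $2n$. So the key step, which I expect to be the hardest, is to use two vertices $v,v'\in V_0$. Since $|V_0|\geq \frac{3n}{2}+1$ and $|Q_v|,|Q_{v'}|<\frac n2$, there is a vertex $u\in V_0\setminus(Q_v\cup Q_{v'}\cup\{v,v'\})$. This $u$ is blue-adjacent to all of $(W_v\setminus Q_v)\cup(W_{v'}\setminus Q_{v'})$.

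I would first try to show that this union has size at least $2n$. If the overlap $W_v\cap W_{v'}$ is large, then combining the red edges from $v$ and $v'$ yields a red matching of size $n$ in one of the two neighborhoods, a contradiction. I would then assemble a blue matching of size $n$ inside the union. The blue matching inside each $W\setminus Q$ comes from the previous step. Blue edges across $W_v\setminus W_{v'}$ and $W_{v'}\setminus W_v$ come from the degree bound $dn\geq\frac{5n}{2}$, via the same König vertex-cover argument applied to the red bipartite graph between them.

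The precise bookkeeping of $s_v$, $s_{v'}$ and the overlap is where the constants $\frac{3n}{2}+1$ and $\frac{5n}{2}$ should be exactly what is needed. If this direct two-vertex count fails, the fallback is to take $v$ minimizing $s_v$ and average over all $u\in V_0$.
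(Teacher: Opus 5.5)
\textbf{There is a genuine gap in the two-vertex combination step.} Your single-vertex setup matches the starting point of the paper's proof of the generalization, Lemma~\ref{lem:cover_gen_partite}: you take a K\"onig cover $Q_v$ of the red edges between $W_v$ and $V_0\setminus\{v\}$, so every $u\in V_0\setminus(Q_v\cup\{v\})$ sees $W_v\setminus Q_v$ only in blue. The problem is the step you flagged as the main obstacle. Two vertices do not suffice. The shortfall comes from the cover eating into $W_v$, not from overlap, so bounding $|W_v\cap W_{v'}|$ cannot fix it.

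Here is a concrete configuration. Let $n>12$ be even and let $V_0$ be a red clique on $\frac{3n}{2}+1$ vertices. Let $Q$ be a set of $\frac n2-2$ vertices all of whose edges to $V_0$ are red. For each $v\in V_0$ let $P_v$ be a blue clique on $\frac n2+2$ vertices, with the $P_v$ pairwise disjoint; $P_v$ is red to $v$ and blue to $Q\cup(V_0\setminus\{v\})$. Colour the remaining edges arbitrarily.
\begin{itemize}
\item Every $v\in V_0$ has red degree exactly $\frac{5n}{2}$.
\item $P_v$ is isolated in the red graph on $N_R(v)$, which therefore has only $2n-2$ non-isolated vertices. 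So there is no red $F_n$ centered in $V_0$.
\item The unique minimum cover of $H_v$ is $Q$, so $W_v\setminus Q_v=P_v$. For every pair $v,v'$ the union $P_v\cup P_{v'}$ has only $n+4<2n$ vertices.
\item The overlap $W_v\cap W_{v'}=Q$ is large, yet it yields no red $F_n$.
\end{itemize}
The lemma still holds here, since $N_B(u)=\bigcup_{w\neq u}P_w$. However, a blue $F_n$ at $u$ needs at least four of the cliques $P_w$. Your fallback (minimising $s_v$, averaging over $u$) is too unspecific to address this; here all $s_v$ are equal.

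\textbf{The missing idea is the paper's greedy iteration over as many vertices as needed.}
\begin{itemize}
\item Pick $v_1$, then $v_2\in V_0\setminus(Q_{v_1}\cup\{v_1\})$, then $v_3$ outside $Q_{v_1}\cup Q_{v_2}\cup\{v_1,v_2\}$, and so on.
\item Each new $v_{i+1}$ sees every earlier $W_{v_j}\setminus Q_{v_j}$ only in blue. So these sets are automatically pairwise disjoint, and no overlap argument is needed. They all lie in the blue neighbourhood of the last vertex chosen.
\item If they are blue cliques, $k'$ of them with total size at least $2n+k'$ give a blue $F_n$.
\end{itemize}
The count works because of a trade-off. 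Write $y_v=|Q_v\cap V_0|$. Then $|Q_v|\leq 2n-|V_0|+O(1)$ gives $|W_v\setminus Q_v|\geq (d-2)n+y_v-O(1)$. A cover lying inside $W_v$ leaves $V_0$ free for further steps, while a cover using much of $V_0$leaves a larger blue clique. Choosing each $v_i$ to maximise $y_{v_i}$, and stopping only when the covers exhaust $V_0$, balances the two; this is the computation \eqref{eq:A}--\eqref{eq:yi2}.

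\textbf{Two further corrections.}
\begin{itemize}
\item For this you need $W_v\setminus Q_v$ to be a genuine blue clique. So the red edges inside $W_v$ must be included in the graph you cover. The paper takes both ends of the internal edges of a suitable maximum matching plus a K\"onig cover of the cross edges, still of total size at most $2n-|V_0|+O(1)$. Your Edmonds--Gallai route on $G_R[W_v]$ discards the barrier set, which can have linear size, so ``almost full size'' is not justified.
\item A red clique $V_0$ only contains a red $F_n$ by itself when $|V_0|\geq 2n+1$. For larger $|V_0|\leq 2n$, pass instead to a subclique of size $\lceil\frac{3n}{2}\rceil+1$.
\end{itemize}
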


We now generalize Lemma \ref{lem:CYZ} by (i) allowing for different lower bounds on $|V_0|$ and $dn$ (in particular, if $|V_0|$ is larger then we can allow for $dn$ to be smaller, and vice versa), (ii) allowing for $V_0$ to be a complete multipartite graph (with bounded part sizes) instead of a clique, and (iii) allowing for the off-diagonal case where we either get a red $F_n$ or a blue $F_{n-p}$.  We note that Dvo{\v{r}}{\'a}k and Metrebian \cite{DM} also generalize Lemma \ref{lem:CYZ}, but as these are written so differently, it is hard for us to say whether these two generalizations are equivalent.  

\begin{lemma}\label{lem:cover_gen_partite}
Let $\ep>0$, let $d\in \mathbb{R}$ with $d>2$, let $n\geq \frac{2}{(d-2)\ep}$, and let $0\leq p<n$.  Let $K$ be a 2-colored complete graph and let $V_0$ be a complete multipartite subgraph of color $\chi\in [2]$ where the largest part has cardinality $\ell\leq \frac{1}{\ep}$, and $n+2\ell< |V_0|< 2n+\ell$. Let $k$ be the largest integer such that $k(2n+2\ell-|V_0|)<|V_0|$.  If every vertex in $V_0$ has degree at least $dn$ in color $\chi$ and 
\begin{equation}\label{eq:k'_gen}
|V_0|\geq (1+\frac{1}{k})(2n-2p)-(k+1)((d-2)n-2\ell),
\end{equation}
then $\bigcup_{v\in V_0}N_\chi(v)$ either contains a monochromatic $F_{n}$ of color $\chi$ with center in $V_0$ or a monochromatic $F_{n-p}$ of color $3-\chi$ with center in $V_0$.
\end{lemma}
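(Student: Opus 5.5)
Without loss of generality let $\chi$ be red, and suppose there is neither a red $F_n$ nor a blue $F_{n-p}$ centered in $V_0$. Set $q:=2n-|V_0|$, so that $q+2\ell>0$ by the upper bound $|V_0|<2n+\ell$. For each $v\in V_0$ write $Z_v:=N_R(v)\setminus V_0$. Every vertex of $V_0$ has red degree at least $dn$ and at most $|V_0|$ red neighbours inside $V_0$, so $|Z_v|\geq dn-|V_0|$. The plan is to apply the Fan extension lemma (Lemma~\ref{lem:main}(i), with $X=V_0$, $Y=\emptyset$, $\lambda=\ell$) to each $v$. Its hypotheses hold because $|V_0|>n+2\ell$. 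Since there is no red $F_n$, for every $v$ the maximum matching in $G_R[Z_v, V_0]\cup G_R[Z_v]$ covers at most $q+2\ell$ vertices of $Z_v$.

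By König's theorem (Theorem~\ref{thm:konig}) applied to a maximum matching, this gives a small "cover". Apart from a set $S_v\subseteq Z_v$ with $|S_v|\le 2n+2\ell-|V_0|$ (after adjusting constants), the set $T_v:=Z_v\setminus S_v$ is a red-independent set that sends no red edges to $V_0\setminus Q_v$, where $Q_v$ is a vertex cover whose size is again at most about $2n+2\ell-|V_0|$. Hence $T_v$ spans a blue clique. Its size is at least $dn-|V_0|-(2n+2\ell-|V_0|)=(d-2)n-2\ell$, and it is joined in blue to almost all of $V_0$.

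Next I would combine $k+1$ of these blue structures. The integer $k$ is defined precisely so that $k$ covers $Q_{v}$, each of size less than $2n+2\ell-|V_0|$, cannot exhaust $V_0$. So I would pick vertices $v_1,\dots,v_{k+1}$ greedily, each $v_{j+1}$ outside $\bigcup_{i\le j}Q_{v_i}$, and look at $W:=\bigcup_j T_{v_j}$. Since $v_{j+1}\notin Q_{v_i}$ and $v_{j+1}$ is red to $T_{v_{j+1}}$, the sets $T_{v_i}$ are essentially disjoint, so $|W|\gtrsim (k+1)((d-2)n-2\ell)$. Now take a vertex $u\in V_0$ outside all the covers (or one chosen by averaging). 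Such a $u$ is blue to all of $W$, and $W$ is a union of $k+1$ blue cliques.

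Finally I would count the blue neighbourhood of $u$ in $V_0\cup W$. Inside $V_0$ this includes $u$'s own part and the blue edges coming from the other parts. The goal is a blue matching of size $n-p$ in $N_B(u)$, obtained by matching inside each blue clique $T_{v_j}$ and pairing leftover vertices via blue edges to $V_0$. An averaging over the $k+1$ (or $k$) choices of $u$ gives the factor $(1+\frac1k)$. Inequality \eqref{eq:k'_gen} should be exactly the condition under which the total exceeds $2n-2p$, so that a blue $F_{n-p}$ exists. The bound $n\ge \frac{2}{(d-2)\ep}$ absorbs the rounding errors of order $\ell$, using $\ell\le 1/\ep$.

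The main obstacle is the disjointness and overlap accounting. I would need to control how the $T_{v_j}$ intersect and how much of $V_0$ is lost to the covers, so that the parameters align exactly with \eqref{eq:k'_gen}. I would first try an averaging argument over the choice of centre rather than insisting on exact disjointness.
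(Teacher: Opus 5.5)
You have the same skeleton as the paper. You apply Lemma~\ref{lem:main}(i) with $Y=\emptyset$ at each $v\in V_0$ and take a vertex cover $Q_v$ with $|Q_v|\le 2n+2\ell-|V_0|$. You should justify that cover, since $G_\chi[Z_v,V_0]\cup G_\chi[Z_v]$ is not bipartite; the paper takes a maximum matching with as many edges inside $Z_v$ as possible and applies K\"onig only to its cross edges. You then note that $Z_v\setminus Q_v$ is a clique of color $3-\chi$, joined in color $3-\chi$ to $V_0\setminus Q_v$. Finally you choose $v_1,v_2,\dots$ greedily outside the earlier covers, so these cliques are disjoint and all lie in the $(3-\chi)$-neighbourhood of the last vertex chosen.

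The counting, which is the real content of the lemma, is missing, and the version you sketch fails.
\begin{itemize}
\item You cannot take a centre outside all $k+1$ covers. Maximality of $k$ allows $k+1$ covers to exhaust $V_0$, so the centre must be $v_{k+1}$ and only $k$ cliques are usable.
\item Your crude bound $|Z_v\setminus Q_v|\ge (d-2)n-2\ell$ then gives about $k((d-2)n-2\ell)$ vertices. But \eqref{eq:k'_gen} is equivalent to $k((d-2)n-2\ell)\ge 2n-2p-\frac{k}{k+1}|V_0|$, so you are short by roughly $\frac{k}{k+1}|V_0|$, which is linear in $n$.
\item Concretely, take $p=0$, $d-2=0.3$ and $|V_0|\approx 1.55n$. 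Then $k=3$ and \eqref{eq:k'_gen} holds, yet three (even four) cliques of size about $0.3n$ are far from the required $2n$.
\item Pairing leftover clique vertices into $V_0$ does not help. $V_0$ is complete multipartite in color $\chi$, so the centre has fewer than $\ell$ neighbours of color $3-\chi$ in $V_0$, all in its own part.
\end{itemize}

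The missing idea is to track where each cover lives. Write $|Q_v|=x_v+y_v$ with $x_v:=|Q_v\cap Z_v|$ and $y_v:=|Q_v\cap V_0|$. Since $x_v+y_v\le 2n+2\ell-|V_0|$, and $v$ has at least $dn-|V_0|+1$ neighbours of color $\chi$ outside $V_0$, the clique $Z_v\setminus Q_v$ has at least $(d-2)n-2\ell+1+y_v$ vertices. So cover vertices spent inside $V_0$ are a gain, not a loss.

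The paper then chooses each $v_i$ (outside the earlier covers) to \emph{maximise} $y_{v_i}$. It continues until the covers exhaust $V_0$, i.e.\ until some $v_{k'+1}$ with $V_0\subseteq\bigcup_{i\le k'+1}Q_{v_i}$. This gives $k'\ge k$, $y_{v_i}\ge y_{v_{k'+1}}$ for $i\le k'$, and $\sum_{i\le k'}y_{v_i}\ge |V_0|-y_{v_{k'+1}}$. With centre $v_{k'+1}$, the $k'$ disjoint cliques have total size at least $k'((d-2)n-2\ell+1)+\sum_{i\le k'}y_{v_i}$.
\begin{itemize}
\item If $y_{v_{k'+1}}\ge \frac{2n-2p}{k'}-(d-2)n+2\ell$, then $\sum_{i\le k'}y_{v_i}\ge k'y_{v_{k'+1}}$ gives at least $2n-2p+k'$ vertices.
\item Otherwise, the exhaustion bound together with \eqref{eq:k'_gen}, $k'\ge k$ and $(d-2)n\ge 2\ell$ gives the same. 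This is where $n\ge\frac{2}{(d-2)\ep}$ is used.
\end{itemize}
Losing at most one vertex per clique, this yields a matching of size $n-p$ in color $3-\chi$, hence the $F_{n-p}$. The factor $(1+\frac1k)$ comes from this threshold on $y_{v_{k'+1}}$, not from averaging over centres.
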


Note that Lemma \ref{lem:CYZ} corresponds to the case when $p=0$, $\ep=\ell=1$, $d=2.5$, and $|V_0|\geq \frac{3n}{2}+1$ (so $k\geq 3$).

\begin{proof}
Without loss of generality suppose $V_0$ is a blue complete multipartite graph; that is, color $\chi$ is blue and color $3-\chi$ is red.  

For all $v\in V_0$, we consider the blue graph $G_B(v):=G_B[N_B(v)\setminus V_0]\cup G_B[N_B(v)\setminus V_0, V_0]$.  Let $M$ be a maximum blue matching in $G_B(v)$ with the secondary property that it has as many edges as possible inside $G_B[N_B(v)\setminus V_0]$.  By Lemma \ref{lem:main}(i) (with $Y=\emptyset$), if $M$ covers at least $2n-|V_0|+2\ell+1$ vertices of $N_B(v)\setminus V_0$, then we have a blue $F_n$; so suppose not.  Because $M$ was chosen to maximize the number of edges inside $G_B[N_B(v)\setminus V_0]$, there can be no internal edges disjoint from $M$ (otherwise we could swap it with a cross-edge to strictly increase the internal edge count). Thus, taking both endpoints of the internal edges of $M$, and applying Theorem \ref{thm:konig} to the remaining cross-edges, we obtain a vertex cover of $G_B(v)$ of cardinality at most $2n-|V_0|+2\ell$.

Assume that for all $v\in V_0$ there is no blue $F_n$ with $v$ as the center.  For all $v\in V_0$, choose a minimum vertex cover $Q_v$ of $G_B(v)$ with the additional property that $x_v:=|Q_v\cap (N_B(v)\setminus V_0)|$ is minimized and set $y_v:=|Q_v|-x_v$.  Note that $(N_B(v)\setminus V_0)\setminus Q_v$ induces a red clique.

Now, over all vertices in $V_0$, choose $v_1$ so that $y_{v_1}$ is maximized.  Then choose $v_2\in V_0\setminus Q_{v_1}$ so that $y_{v_2}$ is maximized and note that by the property of $Q_{v_1}$, we have that $v_2$ has no blue neighbors in $(N_B(v_1)\setminus V_0)\setminus Q_{v_1}$.  Then choose $v_3\in V_0\setminus (Q_{v_1}\cup Q_{v_2})$ so that $y_{v_3}$ is maximized and again note that $v_3$ has no blue neighbors in $((N_B(v_1)\setminus V_0)\setminus Q_{v_1})\cup ((N_B(v_2)\setminus V_0)\setminus Q_{v_2})$.  Continue this process until the point at which we have a vertex $v_{k'+1}\in V_0\setminus (\bigcup_{i=1}^{k'}Q_{v_i})$ such that $V_0\subseteq \bigcup_{i=1}^{k'+1} (Q_{v_i}\cap V_0)$.  Note that the sets $Q_{v_1}\cap V_0, \dots, Q_{v_{k'}}\cap V_0$ are not necessarily disjoint; however, all of them have size at most $2n+2\ell-|V_0|$ so we have $k'\geq k$ by the definition of $k$.  By the choice of $v_1, \dots, v_{k'}$, we have that $y_{v_i}\geq y_{v_{k'+1}}$ for all $i\in [k]$ and thus 
\begin{equation}\label{eq:yi}
\sum_{i=1}^{k'}y_i\geq k'\cdot y_{k'+1}.
\end{equation}

Let $A:=\bigcup_{i=1}^{k'}((N_B(v_i)\setminus V_0)\setminus Q_{v_i})$ and note that $A\subseteq N_R(v_{k'+1})$.  If $|A|\geq 2n-2p+k'$, then since $A$ contains $k'$ disjoint red cliques (the sets $(N_B(v_i)\setminus V_0)\setminus Q_{v_i}$, which are pairwise disjoint because each $v_i$ was chosen to have no blue neighbors in the preceding sets) with a total of at least $2n-2p+k'$ vertices, there is a red matching of size at least $n-p$ in $A$ which, together with $v_{k'+1}$, gives us the desired red copy of $F_{n-p}$.  We now show that it is indeed the case that $|A|\geq 2n-2p+k'$.  

Since every vertex in $V_0$ has at least $dn-(|V_0|-1)$ blue neighbors outside of $V_0$, we have
\begin{align}
|A|\geq k'(dn-(|V_0|-1))-\sum_{i=1}^{k'}x_i&\geq k'(dn-(|V_0|-1))-(k'(2n+2\ell-|V_0|)-\sum_{i=1}^{k'}y_i)\notag\\
&=k'((d-2)n-(2\ell-1))+\sum_{i=1}^{k'}y_i\label{eq:A}
\end{align}

If $y_{k'+1}\geq \frac{2n-2p}{k'}-(d-2)n+2\ell$, then by \eqref{eq:A} and \eqref{eq:yi} we have 
\begin{align*}
|A|\geq k'((d-2)n-(2\ell-1))+\sum_{i=1}^{k'}y_i&\geq k'((d-2)n-(2\ell-1))+k'(\frac{2n-2p}{k'}-(d-2)n+2\ell)\\
&= 2n-2p+k' 
\end{align*}
as desired.  

Otherwise, we have $y_{k'+1} < \frac{2n-2p}{k'}-(d-2)n+2\ell$ and since $V_0\subseteq \bigcup_{i=1}^{k'+1} (Q_{v_i}\cap V_0)$ we have
\begin{equation}\label{eq:yi2}
\sum_{i=1}^{k'}y_i \geq |V_0|-y_{k'+1} > |V_0|-(\frac{2n-2p}{k'}-(d-2)n+2\ell).
\end{equation}
So by \eqref{eq:A} and \eqref{eq:yi2} we have 
\begin{align*}
|A| &> k'((d-2)n-(2\ell-1))+|V_0|-(\frac{2n-2p}{k'}-(d-2)n+2\ell)\\
&= (k'+1)((d-2)n-2\ell) + k' + |V_0| - \frac{2n-2p}{k'}\\
&\stackrel{\eqref{eq:k'_gen}}{\geq} (k'+1)((d-2)n-2\ell) + k' + (1+\frac{1}{k})(2n-2p)-(k+1)((d-2)n-2\ell)-\frac{2n-2p}{k'}\\
&= (2n-2p)(1+\frac{1}{k}-\frac{1}{k'}) + k' + (k'-k)((d-2)n-2\ell)\\
&\geq 2n-2p+k',
\end{align*}
(where we used $k'\geq k$ in the last inequality) which completes the proof.
\end{proof}

The following corollary is tailored to our specific application.

\begin{corollary}\label{cor:main}
Let $\ep>0$, $0\leq \alpha<\frac{1}{2}-\ep$, and let $n\geq \frac{3}{\ep^2}$.  Let $K$ be a 2-colored complete graph, let $V_0$ be a monochromatic complete multipartite subgraph of $K$ of color $\chi\in [2]$ with largest part of cardinality $\ell\leq \frac{1}{\ep}$ and $n+2\ell< |V_0|< 2n+\ell$, let $U_0$ be a monochromatic complete multipartite subgraph of $K$ of color $3-\chi$, disjoint from $V_0$, with parts of size at most $\frac{1}{\ep}$, and suppose that every edge between $U_0$ and $V_0$ is color $3-\chi$.  Let $k$ be the largest integer such that $k(2n+2\ell-|V_0|)<|V_0|$.  If every vertex in $V_0$ has degree at least $(2.5-\alpha+\ep)n$ in color $\chi$ and 
\begin{equation}\label{eq:p_gen}
|U_0|\geq \Big(\frac{4k+2}{(k+1)^2}-k(\frac{1}{2}-\alpha)\Big)n,
\end{equation}
then $K$ contains a monochromatic $F_{n}$ with center in $V_0$.  In particular, if $|U_0|=0$, then \eqref{eq:p_gen} is equivalent to 
\begin{equation}\label{eq:al}
\alpha\leq \frac{1}{2}-\frac{4k+2}{k(k+1)^2}
\end{equation}
(which is useful provided $k\geq 3$).
\end{corollary}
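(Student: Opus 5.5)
We will derive Corollary~\ref{cor:main} from Lemma~\ref{lem:cover_gen_partite}, applied with $d:=2.5-\alpha+\ep>2$ and with $p$ chosen so that a monochromatic $F_{n-p}$ of color $3-\chi$ centred at some $v\in V_0$ extends to a full $F_n$ using $U_0$. Since every vertex of $V_0$ is joined in color $3-\chi$ to all of $U_0$, and $U_0$ is a complete multipartite graph of color $3-\chi$ with parts of size at most $\frac1\ep$, we can add $U_0$ to the fan. Concretely, suppose the lemma gives $v\in V_0$ and a matching of size $n-p$ of color $3-\chi$ inside $N_{3-\chi}(v)\cap\bigcup_{u\in V_0}N_\chi(u)$. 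This set is disjoint from $U_0$: every vertex of $U_0$ is joined to all of $V_0$ in color $3-\chi$, so it is not a $\chi$-neighbour of any $u\in V_0$. We also have $U_0\subseteq N_{3-\chi}(v)$. By Lemma~\ref{lem:partite_matching}, $U_0$ contains a matching of color $3-\chi$ covering at least $|U_0|-\frac{1}{\ep}$ vertices. This is the worst case, a single huge part; in that case we must instead match into the existing matching vertices or accept the loss, and we will absorb it into the slack. So it suffices to take $p:=\lfloor (|U_0|-1/\ep)/2\rfloor$, provided $p<n$; if $|U_0|$ is so large that $p\ge n$, we cap at $p=n-1$ or find the fan directly inside $U_0\cup\{v\}$.

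It remains to check hypothesis \eqref{eq:k'_gen}, i.e.
\[
|V_0|\ge \Big(1+\frac1k\Big)(2n-2p)-(k+1)\big((d-2)n-2\ell\big).
\]
We will use the definition of $k$ as the largest integer with $k(2n+2\ell-|V_0|)<|V_0|$. By maximality, $(k+1)(2n+2\ell-|V_0|)\ge |V_0|$, so $|V_0|\le \frac{k+1}{k+2}(2n+2\ell)$. The hypothesis, however, needs a lower bound on $|V_0|$, so we should instead use $k(2n+2\ell-|V_0|)<|V_0|$, which gives $|V_0|>\frac{k}{k+1}(2n+2\ell)$. Substituting $d-2=\frac12-\alpha+\ep$ and $2p\approx |U_0|$, the condition becomes
\[
\frac{2k}{k+1}n\gtrsim \frac{k+1}{k}\big(2n-|U_0|\big)-(k+1)\Big(\frac12-\alpha\Big)n-(k+1)\ep n+O(k\ell).
\]
Rearranging gives
\[
\frac{k+1}{k}|U_0|\ge \Big(\frac{2(k+1)}{k}-\frac{2k}{k+1}-(k+1)\Big(\frac12-\alpha\Big)\Big)n ,
\]
so
\[
|U_0|\ge \Big(\frac{4k+2}{(k+1)^2}-k\Big(\frac12-\alpha\Big)\Big)n ,
\]
which is exactly \eqref{eq:p_gen}. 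The $\ep n$ term, with a factor $k+1$, absorbs all additive errors: the $2\ell(k+1)$, the rounding in $p$, and the $\frac1\ep$ loss from the matching in $U_0$. This uses $n\ge 3/\ep^2$, $\ell\le 1/\ep$ and $k\le O(1/\ep)$. The bound on $k$ comes from $|V_0|>n+2\ell$, so $2n+2\ell-|V_0|$ is not tiny; if $k$ is large, the right side of \eqref{eq:p_gen} is negative anyway. Finally, $n\ge\frac{2}{(d-2)\ep}$ holds because $d-2>\ep$ and $n\ge 3/\ep^2$.

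The remaining conclusion is immediate. From either alternative of Lemma~\ref{lem:cover_gen_partite} we get a color-$\chi$ $F_n$ centred in $V_0$, or a $(3-\chi)$-colored $F_{n-p}$ plus $p$ disjoint edges in $U_0$, together forming an $F_n$ centred in $V_0$. For $|U_0|=0$ we take $p=0$, and \eqref{eq:p_gen} rearranges to \eqref{eq:al} by dividing by $k$. The main obstacle is the error bookkeeping: making sure the $(k+1)\ep n$ slack truly dominates $2(k+1)\ell+\frac{k+1}{k}\cdot\frac1\ep+O(1)$. This needs $\ep n\gtrsim 3/\ep$, which is where $n\ge 3/\ep^2$ comes in. It also requires handling the case where $U_0$ has a dominant part; there I would first try pairing the dominant part with the other parts, losing at most $\frac1\ep$ vertices, and otherwise shrink $p$ accordingly.
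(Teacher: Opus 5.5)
Your proposal is correct and follows essentially the same route as the paper: apply Lemma~\ref{lem:cover_gen_partite} with $d=2.5-\alpha+\ep$ and $p\approx |U_0|/2$, lower-bound $|V_0|$ by $\frac{k}{k+1}(2n+2\ell)>\frac{2k}{k+1}n$, use the identity $\bigl(1+\frac{1}{k}\bigr)\bigl(2n-(\frac{4k+2}{(k+1)^2}-k(\frac{1}{2}-\alpha))n\bigr)-(k+1)(\frac{1}{2}-\alpha)n=\frac{2k}{k+1}n$, and let the $(k+1)\ep n$ slack absorb the rounding and the $2(k+1)\ell$ term; your choice of $p$, via a matching covering $|U_0|-\frac{1}{\ep}$ vertices of $U_0$, is slightly more careful than the paper's $p=\lfloor |U_0|/2\rfloor$. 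The only slip is your aside that $k=O(1/\ep)$ is used: no such bound is needed, because every error term carries a factor $k+1$ or $\frac{k+1}{k}\le 2$, just as the slack does, and in any case $|V_0|>n+2\ell$ bounds $2n+2\ell-|V_0|$ from above, not from below.
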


\begin{proof}
Without loss of generality, say $\chi=1$ and for the purpose of this proof, say color 1 is blue and color 2 is red.  Note that $U_0$ contains a red matching of size $\floor{\frac{|U_0|}{2}}$. So if there is a red copy of $F_{n-p}$ in $\bigcup_{v\in V_0}N_B(v)$, then together with the red matching in $U_0$, we will get a red copy of $F_{n}$.  

In order to get such a red copy of $F_{n-p}$, we check that Lemma \ref{lem:cover_gen_partite} applies with $d=2.5-\alpha+\ep$, $\ell=\frac{1}{\ep}$, and $p=\floor{\frac{|U_0|}{2}}$.  Indeed, by the definition of $k$, we have $k(2n+2\ell) < (k+1)|V_0|$, which implies $|V_0| > \frac{k}{k+1}(2n+2\ell) > \frac{2k}{k+1}n$. Thus, we have
\begin{align*}
|V_0|> \frac{2k}{k+1}n
&= (1+\frac{1}{k})\Big(2n-\Big(\frac{4k+2}{(k+1)^2}-k(\frac{1}{2}-\alpha)\Big)n\Big)-(k+1)(\frac{1}{2}-\alpha)n \\
&\stackrel{\eqref{eq:p_gen}}{\geq} (1+\frac{1}{k})(2n-|U_0|)-(k+1)(\frac{1}{2}-\alpha)n \\
&\geq (1+\frac{1}{k})(2n-2\floor{\frac{|U_0|}{2}}-1)-(k+1)(\frac{1}{2}-\alpha)n \\
&= (1+\frac{1}{k})(2n-2\floor{\frac{|U_0|}{2}})-(k+1)\Big((\frac{1}{2}-\alpha)n+\frac{1}{k}\Big) \\
&\geq (1+\frac{1}{k})(2n-2\floor{\frac{|U_0|}{2}})-(k+1)\Big((\frac{1}{2}-\alpha+\ep)n-\frac{2}{\ep}\Big),
\end{align*}
where the last inequality holds since $n\geq \frac{3}{\ep^2}\geq \frac{1}{k\ep}+\frac{2}{\ep}$.  
Thus Lemma \ref{lem:cover_gen_partite} applies with $d=2.5-\alpha+\ep$, $\ell=\frac{1}{\ep}$, and $p=\floor{\frac{|U_0|}{2}}$ and we get the desired red copy of $F_{n-p}$ in $\bigcup_{v\in V_0}N_B(v)$.
\end{proof}

\subsection{Main proof}

Now we prove that for all $\ep>0$ and all $n\geq \frac{384}{\ep^2}$, $R(F_n)\leq (5+\ep)n.$

\begin{proof}[Proof of Theorem \ref{thm:main}]
Let $0<\ep'<\frac{1}{2}$ and let $n\geq \frac{384}{(\ep')^2}$.  Let $N'$ be an integer with $N'\geq (5+\ep')n$ and let $K'$ be a 2-colored complete graph on $N'$ vertices.  Now let $r$ be an integer such that $N'-\frac{\ep n}{2}+1\leq 5n+2r+1\leq N'$.  Finally set $\ep=\frac{r}{n}$ and $N=(5+2\ep)n+1$.  Note that $\frac{\ep'}{4}\leq \ep<\frac{\ep'}{2}$ and thus 
\begin{equation}
n\geq \frac{384}{(\ep')^2}\geq \frac{24}{\ep^2}.
\end{equation}
Let $V\subseteq V(K')$ with $|V|=N$ and let $K$ be the 2-colored complete graph induced by $V$.  We will show that there is a monochromatic $F_n$ in $K\subseteq K'$.

Start with a vertex $v$ of maximum monochromatic degree, and without loss of generality say $N_1:=|N_1(v)|\geq |N_2(v)|=:N_2$.  Say color 1 is blue and color 2 is red.  We may assume that for all $i\in [2]$ there is no matching of size $n$ in $G_i[N_i(v)]$ and no copy of $F_n$ in $G_{3-i}[N_i(v)]$, so by Corollary \ref{cor:LL} we may assume $3n > N_1\geq N_2 > 2n$.  Define $\alpha\geq 0$ such that $N_1=(2.5+\ep+\alpha)n$ and thus $N_2= (2.5+\ep-\alpha)n$.  So $0\leq \alpha<\frac{1}{2}-\ep$.  Note that for every color $i\in [2]$, and every vertex $v$, we have 
\begin{equation}\label{eq:maxdeg}
(2.5-\alpha+\ep)n\leq d_i(v)\leq (2.5+\alpha+\ep)n.
\end{equation}
For all $i\in [2]$, let $M_i$ be the largest matching in $G_i[N_i(v)]$ and let 
\begin{equation}\label{eq:def}
\mathrm{def}_i:=N_i-2|M_i|=N_i-|V(M_i)|>N_i-2n=\begin{cases}
(\frac{1}{2}+\alpha+\ep)n & \text{ if } \chi=1\\
(\frac{1}{2}-\alpha+\ep)n & \text{ if } \chi=2
\end{cases}.
\end{equation}
For all $\chi\in [2]$, we apply Lemma \ref{thm:GE} to get an Edmonds-Gallai partition $\{A_\chi, C_\chi, D^\chi_1, \dots, D^\chi_{p_\chi}\}$ of $G_\chi[N_\chi(v)]$.  Set $a_\chi=|A_\chi|$, $c_\chi=|C_{\chi}|$, and $d_i^\chi=|D_i^\chi|$.  By Lemma \ref{lem:structure} we have $a_\chi\leq n-1-\frac{1}{2}(c_\chi+\sum_{i\in p_\chi}(d_i^\chi-1))\leq n-1$.

For all $\chi\in [2]$, let $V_1^\chi, \dots, V_{p_1'}^\chi$ be the sets from $C_\chi, D^\chi_1, \dots, D^\chi_{p_\chi}$ which have cardinality at most $\frac{6}{\ep}$ and let $V_{p_\chi'+1},\dots, V_{p_\chi+1}$ be the sets from $C_\chi, D^\chi_1, \dots, D^\chi_{p_\chi}$ which have cardinality greater than $\frac{6}{\ep}$.  Note that the number of such sets which have cardinality greater than $\frac{6}{\ep}$ (i.e.~$p_\chi+1-p'_{\chi}$) is less than $\frac{N_\chi}{6/\ep}<\frac{3n}{6/\ep}=\frac{\ep n}{2}$.

Now let $S_\chi=\bigcup_{i\in [p'_\chi]}V_i^\chi$ and $T_\chi=\bigcup_{i\in [p'_\chi+1, p_\chi+1]}V_i^\chi$ and $s_\chi=|S_\chi|$ and $t_\chi=|T_\chi|$.  Note that $a_\chi+s_\chi+t_\chi=N_\chi$ and in particular 
\begin{equation}\label{eq:s+t}
s_\chi+t_\chi=N_\chi-a_\chi>N_\chi-n=\begin{cases}
(1.5+\alpha+\ep)n & \text{ if } \chi=1\\
(1.5-\alpha+\ep)n & \text{ if } \chi=2
\end{cases}.
\end{equation}
Also note that by Lemma \ref{lem:structure} we have $p_\chi=a_\chi+\mathrm{def}_\chi$ and thus 
\begin{equation}\label{eq:s}
s_\chi\geq p'_\chi\geq p_\chi-\frac{\ep n}{2}=a_\chi+\mathrm{def}_\chi-\frac{\ep n}{2}>\begin{cases}
a_1+(\frac{1}{2}+\alpha+\frac{\ep}{2})n & \text{ if } \chi=1\\
a_2+(\frac{1}{2}-\alpha+\frac{\ep}{2})n & \text{ if } \chi=2
\end{cases}.
\end{equation}
Finally, combining \eqref{eq:def} and \eqref{eq:s} we have
\begin{equation}\label{eq:2st}
2s_\chi+t_\chi\geq N_\chi+\mathrm{def}_\chi-\frac{\ep n}{2}> \begin{cases}
(3+2\alpha+\frac{3\ep}{2})n & \text{ if } \chi=1\\
(3-2\alpha+\frac{3\ep}{2})n & \text{ if } \chi=2
\end{cases}.
\end{equation}

\begin{claim}\label{clm:siti}
Either we have a monochromatic $F_n$ or both of the following hold:
\begin{enumerate}
\item For all $i\in [2]$, if $s_i> n+\frac{\ep}{6}$, then $s_i+t_i\leq 2n+\frac{6}{\ep}$.
\item $s_1\leq n+\frac{6}{\ep}$ or $s_1> (1+2\alpha+\frac{\ep}{2})n$.
\end{enumerate}
\end{claim}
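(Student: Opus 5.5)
Both parts will come from the Fan extension lemma (Lemma \ref{lem:main}) applied to the complete multipartite graph of colour $3-\chi$ that the Edmonds--Gallai partition produces inside $N_\chi(v)$. By Lemma \ref{lem:structure}(iii), every edge between distinct sets among $C_\chi, D^\chi_1,\dots,D^\chi_{p_\chi}$ has colour $3-\chi$. Hence $G_{3-i}[S_i\cup T_i]$ contains a complete multipartite graph whose parts are the sets $V^i_j$. The parts of $S_i$ have size at most $\lambda:=\frac{6}{\ep}$, while $T_i$ is a union of fewer than $\frac{\ep n}{2}$ large parts. We work entirely in colour $3-i$ and look for a fan $F_n$ there.

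For (i), suppose $s_i>n+\frac{6}{\ep}$ and $s_i+t_i>2n+\frac{6}{\ep}$. Apply Lemma \ref{lem:main}(i) with $X=S_i$ (parts $V^i_1,\dots,V^i_{p_i'}$), $Y=T_i$ (viewed as the remaining parts of the multipartite graph), and $Z$ equal to the rest of the vertex set, using $\lambda=\frac{6}{\ep}$. The lemma's case $q<0$ already handles this. Concretely, one needs $|X|>n+\lambda$, and then Lemma \ref{lem:partite_matching}(i)/(ii) gives, for a centre $u\in V^i_j\subseteq S_i$, a matching in $(S_i\cup T_i)\setminus V^i_j$ covering at least $2n$ vertices. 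This works because $|S_i\cup T_i|-|V^i_j|-1\geq 2n$, and any part exceeding half the remaining vertices still leaves at least $s_i-\lambda>n$ vertices outside it. The only care needed is in the case where the largest part lies in $T_i$ and has size more than half: then we use (ii) of Lemma \ref{lem:partite_matching}, and the complement of that part contains $S_i\setminus V^i_j$, of size more than $n$, giving a matching of size $n$. I read the statement's ``$n+\frac{\ep}{6}$'' as the intended $n+\frac{6}{\ep}$; if it really means $n+\frac{\ep}{6}$, the same argument applies after replacing the threshold, at the cost of a slightly weaker use of $|X|>n+\lambda$. I would verify this discrepancy before writing the proof.

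For (ii), suppose $n+\frac{6}{\ep}<s_1\leq(1+2\alpha+\frac{\ep}{2})n$; we aim for a red $F_n$ centred in $S_1$. By part (i), $s_1+t_1\leq 2n+\frac{6}{\ep}$. Take $V_0=S_1$, a red complete multipartite graph with $\ell\leq\frac{6}{\ep}$ and $n+2\ell<|V_0|<2n+\ell$. We then want Corollary \ref{cor:main} with $\chi=2$ (red), since every vertex has red degree at least $(2.5-\alpha+\ep)n$ by \eqref{eq:maxdeg}. The natural $U_0$ is a blue complete multipartite graph disjoint from $S_1$ with all $U_0$--$S_1$ edges blue, but $S_1\subseteq N_B(v)$, and what we need is something blue-joined to it. Instead I would use $\{v\}\cup$ (parts of $T_1$)? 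That does not give a blue multipartite structure. So it seems simpler to take $U_0=\emptyset$ and check \eqref{eq:al}: $s_1\leq (1+2\alpha+\frac{\ep}{2})n$ together with $s_1>n$ determines $k$ via $k(2n+2\ell-s_1)<s_1$.

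The main obstacle is that this is not automatic. For large $\alpha$, $s_1$ may be only slightly above $n$, giving $k=1$ and making \eqref{eq:al} useless. I would therefore try a counting argument instead. The lower bound \eqref{eq:s}, $s_1>a_1+(\frac12+\alpha+\frac{\ep}{2})n$, combined with $a_1\geq$ something, might force $s_1>(1+2\alpha)n$ unless $s_1\le n+\frac{6}{\ep}$. Indeed each vertex of a small part of $S_1$ has its blue neighbours inside $N_B(v)$ contained in its own part together with $A_1$. So it has red degree at least $|N_B(v)|-a_1-\frac{6}{\ep}$ there, and also blue degree at least $(2.5-\alpha+\ep)n$ overall, most of which goes outside $N_B(v)$. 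Combining this with the red degree of $v$ being $(2.5-\alpha+\ep)n$, I expect the window $n+\frac6\ep<s_1\le(1+2\alpha+\frac\ep2)n$ to produce, via Lemma \ref{lem:main}(ii)/(iii) with $Y=T_1$ and $Z=N_R(v)\cup\{v\}$, a matching from a vertex of $S_1$ into $Z$ of the size $2(q+\lambda)$ required. The key step I would try first is to bound $a_1$ from below using $|X|+|Y|\le 2n+\lambda$ and \eqref{eq:2st}, and then show that the red edges from $S_1$ into $N_R(v)$ contain a matching of size at least $q+\lambda$ by K\"onig's theorem (Theorem \ref{thm:konig}).
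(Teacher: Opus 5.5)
Your part (i) is the paper's argument: if $s_i>n+\frac6\ep$ and $s_i+t_i>2n+\frac6\ep$, then for a centre in a part $V^i_j\subseteq S_i$, the colour-$(3-i)$ complete multipartite graph on $(S_i\cup T_i)\setminus V^i_j$ has a matching of size $n$ by Lemma \ref{lem:partite_matching}. You are also right that the $\frac{\ep}{6}$ in the statement is a typo for $\frac{6}{\ep}$; the proof and all later uses have $\frac6\ep$. However, drop the remark that the same argument covers the literal threshold. If $s_i$ exceeds $n$ by less than the size of the centre's part and $T_i$ is a single part, the multipartite matching has fewer than $n$ edges.

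Part (ii), however, is not proved. You correctly discard Corollary \ref{cor:main}, which fails when $k=1$. After that you only outline a plan (degree counting, Lemma \ref{lem:main}(ii)/(iii) with $Z=N_R(v)\cup\{v\}$, and a K\"onig cover), and the required matching into $N_R(v)$ is never shown to exist. In fact no fan needs to be built at all: the window $n+\frac6\ep<s_1\le(1+2\alpha+\frac\ep2)n$ is empty by pure arithmetic. Since $s_1>n+\frac6\ep$, part (i) gives $s_1+t_1\le 2n+\frac6\ep$, which you even wrote down. Subtracting it from \eqref{eq:2st} gives
$$s_1=(2s_1+t_1)-(s_1+t_1)>\Big(1+2\alpha+\frac{3\ep}{2}\Big)n-\frac{6}{\ep}\geq \Big(1+2\alpha+\frac{\ep}{2}\Big)n,$$
because $\ep n\geq \frac{24}{\ep}$. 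The paper phrases the same thing contrapositively. By \eqref{eq:s}, the assumption $s_1\le(1+2\alpha+\frac\ep2)n$ forces $a_1<(\frac12+\alpha)n$. Hence $s_1+t_1=N_1-a_1>(2+\ep)n\ge 2n+\frac6\ep$, contradicting (i).

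Your two tentative sentences already contain exactly this argument: ``$a_1\geq$ something might force $s_1>(1+2\alpha)n$'' and ``bound $a_1$ from below using $|X|+|Y|\le 2n+\lambda$''. Indeed $a_1=N_1-(s_1+t_1)\ge(\frac12+\alpha+\ep)n-\frac6\ep$. You just needed to plug this into \eqref{eq:s} and stop.
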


\begin{proofclaim}
(i) If for some $i\in [2]$, $s_i> n+\frac{6}{\ep}$ and $s_i+t_i> 2n+\frac{6}{\ep}$ then we would directly have the required fan in color $3-i$ in the complete multipartite graph induced by $G_{3-i}[S_i\cup T_i]$ (by Lemma \ref{lem:partite_matching}). 

(ii) Suppose for contradiction that there is no monochromatic $F_n$ and $n+\frac{6}{\ep}< s_1\leq (1+2\alpha+\frac{\ep}{2})n$.  So by \eqref{eq:s} we have $a_1+(\frac{1}{2}+\alpha+\frac{\ep}{2})n< s_1\leq (1+2\alpha+\frac{\ep}{2})n$ and thus $a_1<(\frac{1}{2}+\alpha)n$.  So we have 
$
s_1+t_1=N_1-a_1>(2+\ep)n\geq 2n+\frac{6}{\ep},
$
contradicting (i).
\end{proofclaim}

\begin{figure}[ht]
     \centering
     \begin{subfigure}[t]{0.42\textwidth}
        \centering
        \includegraphics[width=\textwidth]{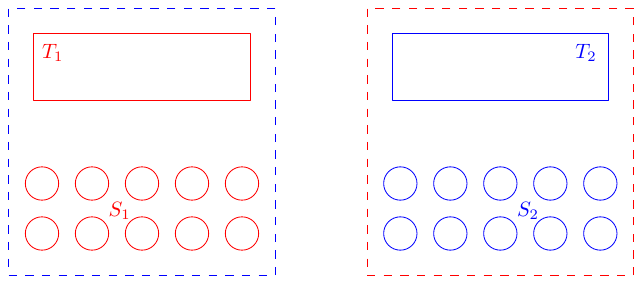}
        \caption{The monochromatic complete multipartite graphs in $G_B(v)$ and $G_R(v)$ respectively.}
        \label{fig:main1}
     \end{subfigure}
     \hfill
     \begin{subfigure}[t]{0.42\textwidth}
        \centering
         \includegraphics[width=\textwidth]{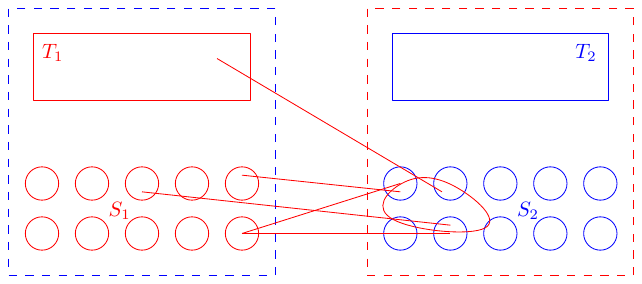}
         \caption{Using the edges between the complete multipartite graphs in an attempt to get a larger red fan.}
         \label{fig:main2}
     \end{subfigure}
     \hfill
        \caption{}
        \label{fig:main}
\end{figure}

\begin{claim}\label{clm:deg}
Either we have a monochromatic $F_n$ or all of the following hold:
\begin{enumerate}
\item For all $u\in S_1$, $d_R(u, V\setminus (S_1\cup T_1))\leq a_1+\frac{6}{\ep}\stackrel{\eqref{eq:s}}{<}s_1-(\frac{1}{2}+\alpha+\frac{\ep}{2})n$.
\item For all $v\in S_2$, $d_B(v, V\setminus (S_2\cup T_2))\leq a_2+2\alpha n+\frac{6}{\ep}\stackrel{\eqref{eq:s}}{<}s_2-(\frac{1}{2}-3\alpha+\frac{\ep}{2})n$.
\item If $s_1>n+\frac{6}{\ep}$, then for all $u\in S_1$, $d_R(u, S_2)\leq 2n-(s_1+t_1)+\frac{12}{\ep}\stackrel{\eqref{eq:2st}}{<}s_1-(1+2\alpha+\ep)n$.
\item If $s_2>n+\frac{6}{\ep}$, then for all $v\in S_2$, $d_B(v, S_1)\leq 2n-(s_2+t_2)+\frac{12}{\ep}$.
\end{enumerate}
\end{claim}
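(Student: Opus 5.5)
Each of the four items comes from the fan extension lemma (Lemma \ref{lem:main}), applied inside one of the two monochromatic complete multipartite graphs. The setup is as follows. Let $G$ be the blue graph, or red for items (ii) and (iv). The sets $V^1_1,\dots,V^1_{p_1'}$ all have size at most $\frac{6}{\ep}$. Between any two of the sets $C_1,D^1_1,\dots,D^1_{p_1}$ every edge is red, by Lemma \ref{lem:structure}(iii). So $G_R[S_1\cup T_1]$ is a red complete multipartite graph whose parts are these sets. We take $X=S_1$ with $\lambda=\frac{6}{\ep}$, $Y=T_1$, and $Z$ the rest of $V$. One subtlety: $T_1$ is a union of several parts, not a single independent part. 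It is still joined completely in red to $S_1$, and the lemma's proof only uses the multipartite structure between $X$ and $Y$, since matchings inside $Y$ only help. Item (ii) is analogous, with the colors swapped and $S_2,T_2$ in place of $S_1,T_1$.

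For (i), fix $u\in S_1$. By Lemma \ref{lem:structure}(iii), $u$ has no blue neighbours in $N_B(v)\setminus(A_1\cup V^1_j)$ for its own part $V^1_j$. So every blue neighbour of $u$ in $N_B(v)\setminus\{u\}$ lies in $A_1\cup V^1_j$. Now $u$ has red degree at least $(2.5-\alpha+\ep)n$ by \eqref{eq:maxdeg}, and its red neighbours outside $S_1\cup T_1$ lie in $A_1$ or outside $N_B(v)$. We bound the red neighbours of $u$ in $Z$ by contradiction. Suppose $u$ has more than $a_1+\frac{6}{\ep}$ red neighbours in $V\setminus(S_1\cup T_1)$. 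At least the excess over $a_1$ then lie outside $N_B(v)\cup\{v\}$, that is, in $N_R(v)$.

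The key step is to turn these red neighbours into a matching extending the red fan at $u$. The first thing I would try is to pair the red neighbours of $u$ in $Z$ with vertices of $T_1$ or with each other in red. If such a pair is blue, it creates blue structure, and that structure is controlled by the Edmonds--Gallai bound $\nu\le n-1$ in $N_B(v)$ or by the red-fan-free condition in $N_R(v)$. Combined with \eqref{eq:s}, this should produce the matching size demanded by Lemma \ref{lem:main}(ii) or (iii), depending on whether $t_1\le n$ or $t_1\ge n$. The threshold in those parts is $2(q+\lambda)$ with $q=2n-s_1-t_1$, or $2(n-s_1+\lambda)$. Claim \ref{clm:siti} ensures $s_1+t_1\le 2n+\frac{6}{\ep}$ when needed, so $q$ is essentially nonnegative.

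The second inequalities in (i) and (ii) are arithmetic from \eqref{eq:s}. In (ii) the extra term $2\alpha n$ comes from $d_B$ being as large as $(2.5+\alpha+\ep)n$ rather than $(2.5-\alpha+\ep)n$.

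For (iii) and (iv), assume $s_1>n+\frac{6}{\ep}$ and apply Lemma \ref{lem:main}(i) with $X=S_1$, $Y=T_1$ and $Z\supseteq S_2$. Fix $u\in S_1$. Its red neighbours in $S_2$ all lie in $N_R(v)$, and $G_R[S_2]$ is complete multipartite in red with parts of size at most $\frac{6}{\ep}$. So those red neighbours, say $r$ of them, contain a red matching covering at least $r-\frac{6}{\ep}$ vertices of $Z$. The lemma gives a red $F_n$ once $r-\frac{6}{\ep}>q+\frac{12}{\ep}$, up to the constant adjustments, which is exactly the bound $2n-(s_1+t_1)+\frac{12}{\ep}$. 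The final inequality in (iii) then follows from \eqref{eq:2st}. Item (iv) is symmetric, using blue, $S_1$ and $S_2$ swapped.

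I expect the main obstacle to be (i) and (ii): producing a large enough matching among the neighbours in $Z$ without a multipartite structure on $Z$. If the matching approach fails, I would instead use a degree count, namely the minimum degree of each red neighbour of $u$ inside $N_R(u)$, to get the matching.
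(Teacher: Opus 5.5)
\textbf{The gap in (i) and (ii).} This is the step you yourself flag: you never actually produce the matching, and no matching is needed. The missing idea is a direct degree count against the \emph{upper} bound in \eqref{eq:maxdeg}; you quote the lower bound, which points the wrong way. A vertex $u\in S_1$ is red-adjacent to every vertex of $S_1\cup T_1$ outside its own part, and that part has at most $\frac{6}{\ep}$ vertices. Hence $d_R(u)\geq s_1+t_1-\frac{6}{\ep}+d_R(u,V\setminus(S_1\cup T_1))$. If the last term exceeded $a_1+\frac{6}{\ep}$, then $d_R(u)>a_1+s_1+t_1=N_1=(2.5+\alpha+\ep)n$. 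This contradicts the choice of $v$ as a vertex of maximum monochromatic degree. Item (ii) is the same count in blue inside $S_2\cup T_2$, where the threshold $a_2+s_2+t_2+2\alpha n=N_2+2\alpha n=N_1$ accounts for the $2\alpha n$. Your closing remark about that term is essentially the whole proof, but you did not turn it into one.

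\textbf{The gap in (iii) and (iv).} For (iii), your key structural claim is false and contradicts your own setup. The set $S_2\subseteq N_R(v)$ comes from the Edmonds--Gallai partition of the \emph{red} graph on $N_R(v)$, so by Lemma \ref{lem:structure}(iii) the edges between its parts are blue. It is $G_B[S_2\cup T_2]$ that is complete multipartite, and red edges inside $S_2$ occur only within parts. So the $r$ red neighbours of $u$ in $S_2$ need not span a large red matching.

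What works is to use the cross edges, i.e.\ the red bipartite graph between $N_R(u)\cap S_2$ and $S_1\cup T_1$. If it has a red matching of size more than $2n-(s_1+t_1)+\frac{12}{\ep}$, Lemma \ref{lem:main}(i) gives a red $F_n$. Otherwise K\"onig's theorem gives a cover $Q$ of at most that size. Some $w\in (N_R(u)\cap S_2)\setminus Q$ is then blue to all of $(S_1\cup T_1)\setminus Q$ and to $S_2\cup T_2$ outside its own part. So $d_B(w)\geq 2(s_1+t_1)+(s_2+t_2)-2n-\frac{18}{\ep}>(2.5+\alpha+\ep)n$ by \eqref{eq:s+t}, a contradiction.

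Finally, (iv) is not symmetric to (iii). Fix the vertex $v'\in S_2$ in question and run the same K\"onig argument with colours swapped, using a cover $Q$ of $G_B[N_B(v')\cap S_1,S_2\cup T_2]$. This only guarantees a vertex of $S_1$ with red degree at least about $(s_1+t_1)+2(s_2+t_2)-2n>(2.5-\alpha+3\ep)n-\frac{18}{\ep}$. Because $N_2=N_1-2\alpha n$, this falls short of $N_1=(2.5+\alpha+\ep)n$ by roughly $2\alpha n$.

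An extra step is needed: split on how much of $Q$ lies in $S_2\cup T_2$.
\begin{itemize}
\item If little of $Q$ lies there, some vertex of $S_1$ has red degree above $N_1$.
\item If much lies there, one derives $s_1>(\frac{3}{2}-\alpha+\ep)n$. One then finds a red matching of size at least $(\frac{1}{2}-\alpha+\ep)n$ between $(N_B(v')\cap S_1)\setminus Q$ and $(S_2\cup T_2)\setminus Q$, all of whose edges are red. Lemma \ref{lem:main}(i) extends this to a red $F_n$ centred in $S_1$.
\end{itemize}
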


\begin{proofclaim}
(i) If there exists $u\in S_1$ with  $d_R(u, V\setminus (S_1\cup T_1))>a_1+\frac{6}{\ep}$, then $$d_R(u)> s_1+t_1-\frac{6}{\ep}+a_1+\frac{6}{\ep}=(2.5+\alpha+\ep)n,$$ a contradiction.  

(ii) If there exists $v\in S_2$ with  $d_B(v, V\setminus (S_2\cup T_2))> a_2+2\alpha n+\frac{6}{\ep}$, then $$d_B(v)> s_2+t_2-\frac{6}{\ep}+a_2+2\alpha n+\frac{6}{\ep}=(2.5+\alpha+\ep)n,$$ a contradiction.  

(iii) Suppose $s_1>n+\frac{\ep}{6}$ and there exists $u\in S_1$ with red degree greater than $2n-(s_1+t_1)+\frac{12}{\ep}$ to $S_2$.  If there is a red matching of size greater than $2n-(s_1+t_1)+\frac{12}{\ep}$ in $G_R':=G[N_R(u)\cap S_2, S_1\cup T_1]$, then by applying Lemma \ref{lem:main}(i) with $X=S_1$ and $Y=T_1$ (since $s_1> n+\frac{6}{\ep}$ and all parts in $S_1$ have size at most $\frac{6}{\ep}$) we have a red $F_n$.  Otherwise there is a vertex cover of $G_R'$ of size at most $2n-(s_1+t_1)+\frac{12}{\ep}$.  Hence there is a vertex in $S_2$ with blue degree at least $$s_2+t_2-\frac{6}{\ep}+s_1+t_1-(2n-(s_1+t_1)+\frac{12}{\ep})\stackrel{\eqref{eq:s+t}}{>}(2.5+\alpha+\ep)n,$$ a contradiction.  

(iv) Suppose $s_2>n+\frac{6}{\ep}$ and there exists $v\in S_2$ with blue degree greater than $2n-(s_2+t_2)+\frac{12}{\ep}$ to $S_1$.  If there is a blue matching of size greater than $2n-(s_2+t_2)+\frac{12}{\ep}$ in $G_B':=G_B[N_B(v)\cap S_1, S_2\cup T_2]$, then by applying Lemma \ref{lem:main}(i) with $X=S_1$ and $Y=T_1$ (since $s_1> n+\frac{6}{\ep}$ and all parts in $S_1$ have size at most $\frac{6}{\ep}$) we have a blue $F_n$.  Otherwise there is a vertex cover $Q$ of $G_B'$ of size at most $2n-(s_2+t_2)+\frac{12}{\ep}$.  Note that every edge between $(N_B(v)\cap S_1)\setminus Q$ and $(S_2\cup T_2)\setminus Q$ is red and $(N_B(v)\cap S_1)\setminus Q\neq \emptyset$.  Let $q_1=|Q\cap S_1|$ and let $q_2=|Q\cap (S_2\cup T_2)|$.  So there is a vertex $u\in N_B(v)\cap S_1$ with 
\begin{equation}\label{eq:q2}
d_R(u)\geq s_1+t_1-\frac{6}{\ep}+s_2+t_2-q_2.
\end{equation}
If $q_2<s_2+t_2-a_1-\frac{6}{\ep}$, then by \eqref{eq:q2} we have $d_R(u)>s_1+t_1+a_1=N_1$, a contradiction.  So suppose $s_2+t_2-a_1-\frac{6}{\ep}\leq q_2\leq q\leq 2n-(s_2+t_2)+\frac{12}{\ep}$ in which case \eqref{eq:q2} gives
\begin{align*}
d_R(u)&\geq s_1+t_1-\frac{6}{\ep}+s_2+t_2-(2n-(s_2+t_2)+\frac{12}{\ep})\\
&=N_1-a_1+2(N_2-a_2)-2n-\frac{18}{\ep}\\
&>(3+2\alpha+\ep)n-s_1+2(1.5-\alpha)n-2n=(4+\ep)n-s_1. 
\end{align*}
If $s_1\leq (\frac{3}{2}-\alpha+\ep)n$, then this is a contradiction; so suppose $s_1> (\frac{3}{2}-\alpha+\ep)n\geq (1+\ep)n> n+\frac{6}{\ep}$.  We will now show that $|(N_B(v)\cap S_1)\setminus Q|, |(S_2\cup T_2)\setminus Q|>(\frac{1}{2}-\alpha+\ep)n$ and since every edge between $(N_B(v)\cap S_1)\setminus Q$ and $(S_2\cup T_2)\setminus Q$ is red, this will give us a vertex $u\in S_1$ which is the center of a red $F_n$.  

Note that $$|(S_2\cup T_2)\setminus Q|=s_2+t_2-q_2\geq s_2+t_2-(2n-(s_2+t_2)+\frac{12}{\ep})\stackrel{\eqref{eq:s+t}}{>}(1-2\alpha)n> (\frac{1}{2}-\alpha+\ep)n,$$ where the last inequality holds since $0\leq \alpha<\frac{1}{2}-\ep$. Also $$|(N_B(v)\cap S_1)\setminus Q|\geq 2n-(s_2+t_2)+\frac{12}{\ep}-q_1\geq q_2\geq s_2+t_2-a_1-\frac{6}{\ep} \stackrel{\eqref{eq:s+t}}{\geq} (\frac{1}{2}-\alpha+\ep)n.$$  Thus we have a vertex in $u\in S_1$ and a red matching  of size at least $(\frac{1}{2}-\alpha+\ep)n\stackrel{\eqref{eq:s+t}}{>} 2n-(s_1+t_1)+\frac{12}{\ep}$ from $(S_2\cup T_2)\setminus Q\subseteq N_R(u)\cap (S_2\cup T_2)$ to $N_B(v)\cap S_1\subseteq S_1$ and thus we are done by Lemma \ref{lem:main}(i) (since $s_1>n+\frac{6}{\ep}$). 
\end{proofclaim}

\begin{claim}
Either we have a monochromatic copy of $F_n$ or $s_i<(\frac{3}{2}+\frac{\alpha}{3}+\alpha^2)n+\frac{12}{\ep}$ for all $i\in [2]$.
\end{claim}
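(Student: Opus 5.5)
The plan is to argue by contradiction for a fixed $i\in[2]$, using Corollary \ref{cor:main} with $V_0:=S_i$, viewed as a complete multipartite graph in color $3-i$. Suppose there is no monochromatic $F_n$ and $s_i\geq(\frac{3}{2}+\frac{\alpha}{3}+\alpha^2)n+\frac{12}{\ep}$.

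\textbf{Checking the hypotheses of Corollary \ref{cor:main}.}
\begin{itemize}
\item By Lemma \ref{lem:structure}(iii), every edge between two distinct sets among $C_i,D^i_1,\dots,D^i_{p_i}$ has color $3-i$. So $S_i$ is a complete multipartite graph of color $3-i$ whose parts have size at most $\ell:=\frac{6}{\ep}$.
\item By \eqref{eq:maxdeg}, every vertex of $S_i$ has degree at least $(2.5-\alpha+\ep)n$ in color $3-i$.
\item Since $s_i>n+\frac{6}{\ep}$, Claim \ref{clm:siti}(i) gives $s_i\leq 2n+\frac{6}{\ep}-t_i$. I will check that this yields $|V_0|<2n+\ell$, and $|V_0|>n+2\ell$ is immediate from the assumed lower bound.
\item The remaining parameters are $\ep$ replaced by a constant multiple ($\ell\leq 1/\ep_{\mathrm{cor}}$ with $\ep_{\mathrm{cor}}=\ep/6$) and $n\geq 24/\ep^2$, which I will verify are consistent.
\end{itemize}

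\textbf{Lower bound on $k$.} The assumed size $s_i\geq\frac{3}{2}n+\Theta(1/\ep)$ forces $k\geq 3$, because $3(2n+2\ell-s_i)<s_i$ as soon as $s_i>\frac{3}{2}n+\frac{3}{2}\ell$. More precisely, $k$ is roughly $\lfloor s_i/(2n-s_i)\rfloor$.

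\textbf{Choosing $U_0$.} I would take $U_0$ of color $i$, fully joined in color $i$ to $V_0$. The natural candidate is a large subset of $S_{3-i}$ (for $i=1$, a subset of $S_2$, which is blue multipartite). Claim \ref{clm:deg}(iii),(iv) controls how many vertices of $S_{3-i}$ a vertex of $S_i$ can reach in color $3-i$. Using this, I would shrink $V_0$ slightly, by deleting a few vertices of $S_i$, and pass to a set $U_0\subseteq S_{3-i}$ of vertices with no color-$(3-i)$ neighbor in the shrunken $V_0$. The sizes come from double counting $e_{3-i}(S_i,S_{3-i})$ with Claim \ref{clm:deg}, together with the bounds \eqref{eq:s} and \eqref{eq:2st} on $s_{3-i}$.

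\textbf{Deriving the contradiction.} Plugging $|V_0|$, $k$ and $|U_0|$ into \eqref{eq:p_gen}, it should hold exactly when $s_i\geq(\frac{3}{2}+\frac{\alpha}{3}+\alpha^2)n+O(1/\ep)$. The expression $\frac{\alpha}{3}+\alpha^2$ suggests the binding case is $k=3$: the corollary's requirement becomes $|U_0|\geq(3\alpha-\frac{5}{8})n$, and $|U_0|$ grows linearly in $s_i-\frac{3}{2}n$. Corollary \ref{cor:main} then produces a monochromatic $F_n$ centered in $V_0$, which is the desired contradiction.

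\textbf{Main obstacle.} I expect the hard part to be the bookkeeping in choosing $U_0$ and the shrunken $V_0$, so that the edges between them are entirely of color $i$ while $|U_0|$ is still as large as the corollary requires. A secondary difficulty is handling $k\geq 4$ uniformly. For that I would treat $k$ as the real quantity $s_i/(2n-s_i)$ and use monotonicity of the right side of \eqref{eq:p_gen}.
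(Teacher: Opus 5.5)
There is a genuine gap, and it is the step you yourself call the main obstacle: nothing you cite produces $U_0$. Claim \ref{clm:deg}(iii),(iv) only bound, vertex by vertex, the color-$(3-i)$ degrees between $S_i$ and $S_{3-i}$. Those bounds are linear in $n$; in (iii) they can be about $(\frac{1}{2}-\alpha)n$. Per-vertex bounds of this kind do not give a pair $U_0\subseteq S_{3-i}$, $V_0'\subseteq S_i$, both of linear size, with no color-$(3-i)$ edge between them. If those edges are spread out in a random-like way, which nothing in Claim \ref{clm:deg} rules out, the common color-$i$ neighbourhood in $S_{3-i}$ of any linear-size $V_0'$ is tiny. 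Deleting ``a few'' vertices of $S_i$ therefore does not help. Deleting many destroys the bound $|V_0|>\frac{2k}{k+1}n$ and changes $k$, and Corollary \ref{cor:main} depends on exactly these. Your heuristic that \eqref{eq:p_gen} ``holds exactly when'' $s_i\geq(\frac{3}{2}+\frac{\alpha}{3}+\alpha^2)n$ is also not where that threshold comes from.

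The missing observation is that $U_0=\emptyset$ already works. Because $k$ is defined from $s_i$ itself, maximality of $k$ gives $s_i\leq\frac{k+1}{k+2}(2n+\frac{12}{\ep})$. Combined with the assumed lower bound on $s_i$, this caps $\alpha$ for each $k$:
\begin{itemize}
\item For $k=3$ you get $\frac{\alpha}{3}+\alpha^2<\frac{1}{10}$, so $\alpha<0.191<\frac{5}{24}$. That is, $3\alpha-\frac{5}{8}<0$, so the requirement $|U_0|\geq(3\alpha-\frac{5}{8})n$ you derived is vacuous.
\item For $k=4$ you get $\frac{\alpha}{3}+\alpha^2<\frac{1}{6}$, so $\alpha<0.28<\frac{1}{2}-\frac{18}{100}$.
\item For $k\geq 5$, Claim \ref{clm:siti}(ii) gives $s_1>(1+2\alpha+\frac{\ep}{2})n\geq(1+2\alpha)n+\frac{12}{\ep}$, using $n\geq\frac{24}{\ep^2}$. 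Hence $\alpha<\frac{1}{2}-\frac{1}{k+2}\leq\frac{1}{2}-\frac{4k+2}{k(k+1)^2}$.
\end{itemize}
So \eqref{eq:al} holds in every case, and Corollary \ref{cor:main} with $V_0=S_1$ and $U_0=\emptyset$ gives the fan directly. This is the paper's entire proof, and no double counting between $S_1$ and $S_2$ is needed.

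Since you want to treat both $i$ uniformly, one caution. The paper writes the argument for $s_1$ ``without loss of generality'', but its $k\geq 5$ step uses Claim \ref{clm:siti}(ii), which is stated only for $s_1$. For $i=2$ that step needs its own justification.
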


\begin{proofclaim}
Without loss of generality suppose $s_1\geq (\frac{3}{2}+\frac{\alpha}{3}+\alpha^2)n+\frac{12}{\ep}$.  Let $k$ be the largest integer such that $k(2n+\frac{12}{\ep}-s_1)<s_1$.   By the definition of $k$, we have $$\frac{k}{k+1}(2n+\frac{12}{\ep})< s_1\leq \frac{k+1}{k+2}(2n+\frac{12}{\ep}),$$  
and since $s_1\geq \frac{3n}{2}+\frac{12}{\ep}$, we have $k\geq 3$.  

When $k=3$, note that $(\frac{3}{2}+\frac{\alpha}{3}+\alpha^2)n+\frac{6}{\ep}<s_1<\frac{8n}{5}+\frac{12}{\ep}$ implies that $\alpha<.2<\frac{1}{2}-\frac{4\cdot 3+2}{3(3+1)^2}$ and thus we can apply Corollary \ref{cor:main} (with $U_0=\emptyset$).  
When $k=4$, note that $(\frac{3}{2}+\frac{\alpha}{3}+\alpha^2)n+\frac{6}{\ep}<s_1<\frac{5n}{3}+\frac{12}{\ep}$ implies that $\alpha<.27<\frac{1}{2}-\frac{4\cdot 4+2}{4(4+1)^2}$ and thus we can apply Corollary \ref{cor:main} (with $U_0=\emptyset$). 

When $k\geq 5$, first note that since $s_1\geq (\frac{3}{2}+\frac{\alpha}{3}+\alpha^2)n\geq n+\frac{6}{\ep}$, we have $s_1\geq (1+2\alpha+\frac{\ep}{2})n$ by Claim \ref{clm:siti}.  Now $(1+2\alpha)n+\frac{12}{\ep}<s_1<\frac{2(k+1)}{k+2}n+\frac{12}{\ep}$ implies $\alpha<\frac{1}{2}-\frac{1}{k+2}<\frac{1}{2}-\frac{4k+2}{k(k+1)^2}$ and thus we can apply Corollary \ref{cor:main} (with $U_0=\emptyset$).
\end{proofclaim}

Having established these general claims, we now split into cases depending on the sizes of $s_1$ and $s_2$.

\noindent
\textbf{Case 1.1} ($(1+2\alpha+\frac{\ep}{2})n < s_1 < (\frac{3}{2}+\frac{\alpha}{3}+\alpha^2)n+\frac{12}{\ep}$ and $n+\frac{6}{\ep} < s_2 < (\frac{3}{2}+\frac{\alpha}{3}+\alpha^2)n+\frac{12}{\ep}$)

Let $f(s_1, s_2) = s_1s_2 - s_1(s_1-(1+2\alpha)n) - s_2(s_2-(1-2\alpha)n)$. We first  show that $f(s_1, s_2) > 0$ on the domain implicitly defined in this case. 

We have
\begin{align*}
f(s_1, s_2) &= s_1s_2 - s_1(s_1-(1+2\alpha)n) - s_2(s_2-(1-2\alpha)n)\\
&=(2n - s_1)\left(s_1 - (1+2\alpha)n\right) + (2n - s_2)\left(s_2 - (1-2\alpha)n\right) + (2n - s_1)(2n - s_2)>0
\end{align*}
where the last inequality holds by the bounds on $s_1$ and $s_2$ in this case.  

On the other hand, by Claim \ref{clm:deg}(iii) and (iv), we have 
$$s_1s_2 = \sum_{u\in S_1}d_2(u, S_2) + \sum_{v\in S_2}d_1(v, S_1) < s_1(s_1-(1+2\alpha)n) + s_2(s_2-(1-2\alpha)n)$$ 
which contradicts $f(s_1, s_2) > 0$.

\noindent
\textbf{Case 1.2} ($(1+2\alpha+\frac{\ep}{2})n < s_1 < (\frac{3}{2}+\frac{\alpha}{3}+\alpha^2)n+\frac{12}{\ep}$ and $(\frac{1}{2}-\alpha+\frac{\ep}{2})n+a_2 \leq s_2 \leq n+\frac{6}{\ep}$)

First note that we can relax the bounds to $(1+2\alpha)n \leq s_1 \leq (\frac{3}{2}+\frac{\alpha}{3}+\alpha^2+\ep)n$ and $(\frac{1}{2}-\alpha)n \leq s_2 \leq (1+\ep)n$.  

Let $f(s_1, s_2) = s_1s_2 - s_1(s_1-(1+2\alpha+\ep)n) - s_2(s_2-(\frac{1}{2}-3\alpha+\frac{\ep}{2})n)$.  We first show that $f(s_1, s_2) > 0$ on the domain $[(1+2\alpha)n, (\frac{3}{2}+\frac{\alpha}{3}+\alpha^2+\ep)n]\times [(\frac{1}{2}-\alpha)n, (1+\ep)n]$. 

Note that
$$
\begin{vmatrix}
\frac{\partial^2 f}{\partial s_1^2} & \frac{\partial^2 f}{\partial s_2\partial s_1} \\
\frac{\partial^2 f}{\partial s_1\partial s_2} & \frac{\partial^2 f}{\partial s_2^2}
\end{vmatrix} 
=
\begin{vmatrix}
-2 & 1 \\
1 & -2
\end{vmatrix}
=3>0,
$$
and $\frac{\partial^2 f}{\partial s_1^2} = -2 < 0$, thus $f$ is strictly concave.  So the minimum of $f$ on this closed rectangular domain must occur at one of the four corners. Evaluating $f$ at these four corners, we have:
\begin{align*}
f((1+2\alpha)n, (\frac{1}{2}-\alpha)n) &= ( \frac{1}{2} - \alpha + (\frac{5}{4} + \frac{3}{2}\alpha)\ep )n^2 > 0 \\
f((1+2\alpha)n, (1+\ep)n) &= ( \frac{1}{2} - \alpha + (1 + \alpha - \frac{1}{2}\ep)\ep )n^2 > 0 \\
f((\frac{3}{2}+\frac{\alpha}{3}+\alpha^2+\ep)n, (1+\ep)n) &= ( \frac{1}{4} - \frac{1}{3}\alpha - (\alpha - \frac{2}{3})^2\alpha^2 + (1 - \alpha + \frac{1}{2}\ep)\ep )n^2 > 0 \\
f((\frac{3}{2}+\frac{\alpha}{3}+\alpha^2+\ep)n, (\frac{1}{2}-\alpha)n) &= ( (\frac{3}{4} - (\alpha - \frac{1}{6})^2)\alpha^2 + (\frac{1}{4} + \frac{1}{6}\alpha - \alpha^2)\ep )n^2 > 0
\end{align*}
Where all the inequalities hold since $0\leq \alpha < \frac{1}{2}-\ep$.  Thus $f(s_1, s_2) > 0$ on the entire domain. 

However, by Claim \ref{clm:deg}(ii) and (iii), we have 
$$s_1s_2 = \sum_{u\in S_1}d_2(u, S_2) + \sum_{v\in S_2}d_1(v, S_1) < s_1(s_1-(1+2\alpha+\ep)n) + s_2(s_2-(\frac{1}{2}-3\alpha+\frac{\ep}{2})n)$$ 
which contradicts $f(s_1, s_2) > 0$.

\noindent
\textbf{Case 2.1} ($(\frac{1}{2}+\alpha+\frac{\ep}{2})n+a_1 < s_1 \leq n+\frac{6}{\ep}$ and $n+\frac{6}{\ep} < s_2 < (\frac{3}{2}+\frac{\alpha}{3}+\alpha^2)n+\frac{12}{\ep}$)

Let $f(s_1, s_2) = s_1s_2 - s_1(s_1-(\frac{1}{2}+\alpha)n) - s_2((2+\ep)n-(s_2+t_2))$.  We first show that $f(s_1, s_2) > 0$. 

First note that by the case we have $(\frac{1}{2}+\alpha)n\leq s_1\leq n+\frac{6}{\ep}< s_2$ and $(1.5-\alpha+\ep)n-t_2\stackrel{\eqref{eq:s+t}}{\leq} s_2\leq (2+\ep)n-t_2$.  Using $s_2 + t_2 \ge (1.5-\alpha+\ep)n$, we have
\begin{align*}
f(s_1, s_2) &= s_1s_2 - s_1(s_1-(\frac{1}{2}+\alpha)n) - s_2((2+\ep)n-(s_2+t_2))\\
&= s_1s_2 - s_1^2 + s_1(\frac{1}{2}+\alpha)n + s_2(s_2+t_2) - s_2(2+\ep)n\\
&\geq s_1s_2 - s_1^2 + s_1(\frac{1}{2}+\alpha)n + s_2(1.5-\alpha+\ep)n - s_2(2+\ep)n\\
&= s_1(s_2 - s_1) + s_1(\frac{1}{2}+\alpha)n - s_2(\frac{1}{2}+\alpha)n\\
&= (s_2 - s_1)(s_1 - (\frac{1}{2}+\alpha)n)>0.
\end{align*}
Where the last inequality holds by the bounds on $s_1$ and $s_2$ in this case (in particular $s_2 > s_1$). 

On the other hand, by Claim \ref{clm:deg}(i) and (iv), we have 
$$s_1s_2 = \sum_{u\in S_1}d_2(u, S_2) + \sum_{v\in S_2}d_1(v, S_1) < s_1(s_1-(\frac{1}{2}+\alpha)n) + s_2((2+\ep)n-(s_2+t_2)),$$ 
which contradicts $f(s_1, s_2) > 0$.

\noindent
\tbf{Case 2.2} ($\frac{1}{2}+\alpha+\frac{\ep}{2})n+a_1< s_1\leq n+\frac{6}{\ep}$ and $(\frac{1}{2}-\alpha+\frac{\ep}{2})n+a_2\leq s_2\leq  n+\frac{6}{\ep}$)

First note that in this case, because of the bounds on $a_1$ and $a_2$, \eqref{eq:s+t} becomes
\begin{equation}\label{eq:s+t'}
s_\chi+t_\chi=N_\chi-a_\chi>\begin{cases}
(2+2\alpha+\ep)n & \text{ if } \chi=1\\
(2-2\alpha+\ep)n & \text{ if } \chi=2
\end{cases}.
\end{equation}

We begin with some expository remarks to explain why this case is different from the others.  In this case, we have that both $s_1$ and $s_2$ are too small to apply Claim \ref{clm:deg}(iii) or Claim \ref{clm:deg}(iv).  However, we have that because $s_1$ and $s_2$ are small, $t_1$ and $t_2$ are large.  In particular, by \eqref{eq:s+t'}, $s_1+t_1$ is large enough that if there were a sufficiently large red matching inside $T_1$, we would have the desired red copy of $F_n$ with center in $S_1$.  If there is not a sufficiently large red matching inside $T_1$, then $T_1$ contains a large blue clique.  Importantly, every edge from this large blue clique in $T_1$ sends red edges to all of $S_1$.  We will be able to make use of Corollary \ref{cor:main} in this case unless the blue clique in $T_1$ is too small.  When that happens we must make some ad-hoc arguments to finish the proof.  

Let $M$ be a maximum red matching inside $T_1$ and let $m=|M|$.  First note that since $s_1+t_1-\frac{6}{\ep}=(2.5+\alpha+\ep)n-a_1-\frac{6}{\ep}> (2+2\alpha)n$, we have
\begin{equation}\label{eq:t1-2m}
t_1-2m> s_1-\frac{6}{\ep}
\end{equation}
as otherwise if $t_1-2m\leq s_1-\frac{6}{\ep}$ we have a red fan (which covers all of $T_1$) on $$s_1-\frac{6}{\ep}+2m+t_1-2m=s_1+t_1-\frac{6}{\ep}=(2.5+\alpha+\ep)n-a_1-\frac{6}{\ep}>(2+2\alpha)n$$ vertices and we are done.  Next note that 
\begin{equation}\label{eq:s+m}
m< n-s_1+\frac{6}{\ep} 
\end{equation}
as otherwise if $m\geq n-s_1+\frac{6}{\ep}$, then by \eqref{eq:t1-2m}, we have $t_1-2m>s_1-\frac{6}{\ep}$ and thus for all $u\in S_1$ there is a red fan with $u$ as the center using $M$ and a red matching of size at least $s_1-\frac{6}{\ep}$ from $S_1$ to $T_1'$ which has more than $2m+2s_1-\frac{12}{\ep}\geq 2(n-s_1+\frac{6}{\ep})+2s_1-\frac{12}{\ep}=2n$ vertices and we are done.  Finally, we have
\begin{equation}\label{eq:t2m}
t_1-2m\stackrel{\eqref{eq:2st}}{>}(3+2\alpha+\frac{3\ep}{2})n-2(s_1+m)\stackrel{\eqref{eq:s+m}}{>}(1+2\alpha+\ep)n.
\end{equation}

Let $T_1'=T_1\setminus V(M)$.  Note that by the maximality of $M$, $T_1'$ induces a blue clique of order $t_1-2m$ and every edge between $T_1'$ and $S_1$ is red.  Before dealing with the inequalities bounding the number of edges between $S_1$ and $S_2$, we first apply Corollary \ref{cor:main} with $V_0=T_1'$ and $U_0=S_1$ to rule out a number of cases.  

Let $k$ be the largest integer such that $k(2n-(t_1-2m))<t_1-2m$ and note that the definition of $k$ implies 
\begin{equation}\label{eq:t2m2}
\frac{2k}{k+1}n< t_1-2m\leq \frac{2(k+1)}{k+2}n.
\end{equation}
Combining \eqref{eq:t2m} with \eqref{eq:t2m2} gives 
\begin{equation}\label{eq:1/2-}
\alpha<\frac{1}{2}-\frac{1}{k+2}-\frac{\ep}{2}.
\end{equation}

If $k\geq 2$, then since 
$$|U_0|=|S_1|\geq (\frac{1}{2}+\alpha+\frac{\ep}{2})n\stackrel{\eqref{eq:1/2-}}{>} (\frac{4k+2}{(k+1)^2}-k(\frac{1}{2}-\alpha))n$$ we can apply Corollary \ref{cor:main} with $V_0=T_1'$ and $S_1=U_0$ to get the monochromatic $F_n$ with center in $V_0$.

So we are done unless $k=1$.  In this case, by \eqref{eq:1/2-}, we have $$0\leq \alpha<\frac{1}{6}-\frac{\ep}{2}$$ and also $$(3+2\alpha+\frac{3\ep}{2})n-2s_1-2m\stackrel{\eqref{eq:t2m}}{<}t_1-2m\stackrel{\eqref{eq:t2m2}}{\leq }\frac{4}{3}n$$ which implies
\begin{equation}\label{eq:s1m}
(\frac{5}{6}+\alpha+\ep)n< s_1+m \stackrel{\eqref{eq:s+m}}{<} n+\frac{6}{\ep}.
\end{equation}

The following claim adds to Claim \ref{clm:deg} using the specific circumstances of this remaining case.  

\begin{claim}\label{clm:t2}~
\begin{enumerate}
\item If $t_2\geq n$, then every vertex in $S_2$ has blue degree at most $n-s_2+\frac{6}{\ep}$ to $S_1$. 
\item If $t_2\leq n$, then every vertex in $S_2$ has blue degree at most $2n-(s_2+t_2)+\frac{6}{\ep}\stackrel{\eqref{eq:s+t'}}{<}2\alpha n$ to $S_1$.
\item Every vertex in $S_1$ has red degree at most $n-s_1-m+\frac{6}{\ep}$ to $S_2$.
\end{enumerate}
\end{claim}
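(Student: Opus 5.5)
Each of the three parts will be proved with the same template as Claim \ref{clm:deg}(iii),(iv). We fix a vertex of $S_2$ (resp.\ $S_1$) with too large a degree into the other $S$-set. If the corresponding bipartite graph has a large matching, Lemma \ref{lem:main} produces a fan centred at that vertex, using the complete multipartite structure of $S_\chi\cup T_\chi$ (or of $S_1\cup T_1'\cup M$). Otherwise K\"onig's theorem (Theorem \ref{thm:konig}) gives a small vertex cover, and outside the cover the edges are in the opposite colour; from this we get either a vertex of degree exceeding $(2.5+\alpha+\ep)n$, contradicting \eqref{eq:maxdeg}, or a fan in the other colour.

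For (i) and (ii), fix $v\in S_2$ and apply Lemma \ref{lem:main} in the blue graph with $X=S_2$, $Y=T_2$ and $Z\supseteq S_1$. Every part of $S_2$ has size at most $\frac{6}{\ep}$, so we take $\lambda=\frac{6}{\ep}$, and $|X|+|Y|=s_2+t_2>n$ holds by \eqref{eq:s+t'}.
\begin{itemize}
\item If $t_2\ge n$, Lemma \ref{lem:main}(iii) gives a blue $F_n$ as soon as $v$ has a blue matching from $N_B(v)\cap S_1$ into $T_2$ (or inside $N_B(v)\cap S_1$) covering at least $2(n-s_2+\frac{6}{\ep})$ vertices. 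If no such matching exists, the cover argument bounds the part of $N_B(v)\cap S_1$ that can be matched, and the rest of $N_B(v)\cap S_1$ is red to almost all of $T_2$.
\item If $t_2\le n$, Lemma \ref{lem:main}(ii) applies in the same way with threshold $2(q+\lambda)$, where $q=2n-(s_2+t_2)$.
\end{itemize}
To turn the ``no large matching'' case into the stated degree bound, we argue as in Claim \ref{clm:deg}(iv). A vertex $u\in N_B(v)\cap S_1$ outside the cover is red to $S_1\cup T_1$ except for its own part, and also to most of $T_2$. Because $t_2$ is large in this case (by \eqref{eq:s+t'} and $s_2\le n+\frac6\ep$), the red degree of $u$ then exceeds $N_1=(2.5+\alpha+\ep)n$. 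We expect this to need $d_B(v,S_1)$ only slightly above the claimed threshold, since a single uncovered vertex suffices.

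For (iii), fix $u\in S_1$ with red degree greater than $n-s_1-m+\frac6\ep$ to $S_2$. Inside $N_R(u)$ we already have a red matching consisting of:
\begin{itemize}
\item the matching $M$ in $T_1$, with $m$ edges;
\item a red matching between $S_1\setminus(\text{part of }u)$ and $T_1'$ covering about $2(s_1-\frac6\ep)$ vertices, which exists since $|T_1'|=t_1-2m>s_1$ by \eqref{eq:t1-2m} and all $T_1'$–$S_1$ edges are red.
\end{itemize}
These cover about $2(m+s_1)-\frac{12}{\ep}$ vertices. To reach $2n$ we need roughly $n-s_1-m+\frac6\ep$ further red edges from $N_R(u)\cap S_2$ into the unused part of $T_1'$. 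That leftover has size $t_1-2m-s_1>(1+2\alpha)n-s_1$ by \eqref{eq:t2m}, which is still of order $n$.

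If no such matching exists, K\"onig's theorem gives a cover of size at most $n-s_1-m+\frac6\ep<(\frac16-\alpha)n$ by \eqref{eq:s1m}. Then some vertex of $S_2$ is blue to almost all of the unused part of $T_1'$. Adding its blue degree inside $S_2\cup T_2$, which is at least $s_2+t_2-\frac6\ep$, we expect the total to exceed $(2.5+\alpha+\ep)n$, contradicting \eqref{eq:maxdeg}.

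The main obstacle is this last numerical verification in (iii). We must check that $s_2+t_2$ plus the blue degree into $T_1'\setminus(\text{used})$, minus the cover, beats the maximum degree, using \eqref{eq:s+t'}, \eqref{eq:t2m}, \eqref{eq:s1m} and $\alpha<\frac16$. If the margin fails, the fallback is to make the unused part of $T_1'$ the whole of $T_1'$ by rerouting the $S_1$–$T_1'$ matching flexibly. This may require a case split according to whether the cover lies mostly in $S_2$ or in $T_1'$, as in Claim \ref{clm:deg}(iv).
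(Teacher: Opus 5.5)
Your parts (i) and (ii) follow the paper's proof. You apply Lemma~\ref{lem:main}(iii), resp.\ (ii), with $X=S_2$, $Y=T_2$. Otherwise you take a K\"onig cover, which must be of the \emph{bipartite} graph $G_B[N_B(v)\cap S_1,T_2]$: drop the edges inside $N_B(v)\cap S_1$ before invoking K\"onig. An uncovered $u\in N_B(v)\cap S_1$ then has red degree at least $s_1+t_1+s_2+t_2-n-\frac{12}{\ep}>3n$ in (i). In (ii) it is at least $s_1+t_1+s_2+2t_2-2n-\frac{12}{\ep}$, which exceeds $(2.5+\alpha+\ep)n$ only because $\alpha<\frac16-\frac{\ep}{2}$, so the reduction to $k=1$ is genuinely used there and you should say so.

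In (iii), the construction you set up first fails, as you suspected. Suppose the $S_1$--$T_1'$ matching is fixed first on some $W\subseteq T_1'$ with $|W|\approx s_1$, and you look for red edges from $N_R(u)\cap S_2$ only into $T_1'\setminus W$. Then an uncovered $w\in N_R(u)\cap S_2$ is only guaranteed blue degree
\[
\big(s_2+t_2-\tfrac{6}{\ep}\big)+(t_1-2m-s_1)-\big(n-s_1-m+\tfrac{6}{\ep}\big)=s_2+t_2+t_1-m-n-\tfrac{12}{\ep}.
\]
From \eqref{eq:2st}, \eqref{eq:s+m} and \eqref{eq:s+t'} this is bounded below only by about $3n-s_1$. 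That is roughly $2n$ when $s_1$ is close to $n$, which this case allows, so there is no contradiction with \eqref{eq:maxdeg}. You lose exactly the $|W|\approx s_1$ vertices.

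So the fallback is not optional: it is the proof, and it is what the paper does. Let $X_i$ be $u$'s part. First find a red matching $M'$ of size $n-s_1-m+|X_i|$ from $N_R(u)\cap S_2$ into all of $T_1'$. By \eqref{eq:t2m},
\[
|T_1'\setminus V(M')|\ge t_1-m-n+s_1-\tfrac{6}{\ep}\ge s_1,
\]
so $S_1\setminus X_i$ can afterwards be matched into $T_1'\setminus V(M')$. Together with $M$ this gives $n$ red edges in $N_R(u)$. Formally this is Lemma~\ref{lem:main}(iii) with $X=S_1$, $Y=T_1'$ (one red-independent part, of size $>n$) and $Z\supseteq V(M)\cup S_2$, so that $M$ counts as edges inside $N_R(u)\cap Z$.

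If no such $M'$ exists, take a K\"onig cover $Q$ of $G_R[N_R(u)\cap S_2,T_1']$ with $|Q|\le n-s_1-m+\frac{6}{\ep}$. It leaves some $w\in N_R(u)\cap S_2\setminus Q$ that is blue to $T_1'\setminus Q$, so by \eqref{eq:2st} and \eqref{eq:s+t'}
\[
d_B(w)\ge s_2+t_2-\tfrac{6}{\ep}+t_1-2m-\big(n-s_1-m+\tfrac{6}{\ep}\big)\stackrel{\eqref{eq:s+m}}{>}2s_1+t_1+s_2+t_2-2n-\tfrac{18}{\ep}>\big(3+\tfrac{5\ep}{2}\big)n-\tfrac{18}{\ep}>3n.
\]
No case split on where $Q$ lies is needed. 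One uncovered vertex of $N_R(u)\cap S_2$ suffices, and $|T_1'\setminus Q|\ge t_1-2m-|Q|$ regardless of how $Q$ splits. The bound \eqref{eq:s1m} on $|Q|$ plays no role.
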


\begin{proofclaim}
(i) Suppose $t_2\geq n$ and there exists $v\in S_2$ with blue degree greater than $n-s_2+\frac{6}{\ep}$ to $S_1$.  If there exists a blue matching of size greater than $n-s_2+\frac{6}{\ep}$ in $G_B':=G_B[N_B(v)\cap S_1, T_2]$, then by Lemma \ref{lem:main}(iii) we have a blue $F_n$. Otherwise there is a vertex cover of $G_B'$ of size at most $n-s_2+\frac{6}{\ep}$ and thus there is a vertex in $S_1$ with red degree at least $$s_1+t_1-\frac{6}{\ep}+t_2-(n-s_2+\frac{6}{\ep})\stackrel{\eqref{eq:s+t'}}{>} (3+2\ep)n-\frac{12}{\ep}>3n,$$ a contradiction.

(ii) Suppose $t_2< n$ and there exists a $v\in S_2$ with blue degree greater than $2n-(s_2+t_2)+\frac{6}{\ep}$ to $S_1$.  If there exists a blue matching of size greater than $2n-(s_2+t_2)+\frac{6}{\ep}$ in $G_B':=G_B[N_B(v)\cap S_1, T_2]$, then by Lemma \ref{lem:main}(ii) we have a blue $F_n$.  Otherwise there is a vertex cover of $G_B'$ of size at most $2n-(s_2+t_2)+\frac{6}{\ep}$ and thus there is a vertex in $S_1$ with red degree greater than 
$$s_1+t_1-\frac{6}{\ep}+t_2-(2n-(s_2+t_2)+\frac{6}{\ep})\stackrel{\eqref{eq:s+t'}}{>} (2+2\ep)n-\frac{12}{\ep}+t_2>(2.5+\alpha+\ep)n,$$
a contradiction (where we used the fact that by the case we have $t_2=N_2-(s_2+a_2)> (1-2\alpha+\frac{\ep}{2})n>(\frac{1}{2}+\alpha+\ep)n$ which holds since $\alpha< \frac{1}{6}-\frac{\ep}{2}$ by \eqref{eq:s1m}).

(iii) First note that by \eqref{eq:t2m}, we have $|T_1'|=t_1-2m>(1+2\alpha+\ep)n> n.$  Now suppose there exists $v\in S_1$ with red degree greater than $n-s_1-m+\frac{6}{\ep}$ to $S_2$.  If there exists a red matching of size greater than $n-s_1-m+\frac{6}{\ep}$ in $G_R':=G_R[N_R(v)\cap S_2, T_1']$, then by Lemma \ref{lem:main}(iii) we have a red $F_n$.  Otherwise there is a vertex cover of $G_R'$ of size at most $n-s_1-m+\frac{6}{\ep}$ and thus there exists a vertex in $S_2$ with blue degree at least 
\begin{align*}
s_2+t_2-\frac{6}{\ep}+t_1-2m-(n-s_1-m+\frac{6}{\ep})&=s_1+s_2+t_1+t_2-n-m-\frac{12}{\ep}\\
&\stackrel{\eqref{eq:s+m}}{>} 2s_1+s_2+t_1+t_2-2n-\frac{18}{\ep}\\
&\stackrel{\eqref{eq:2st},\eqref{eq:s+t'}}{>}(3+2\alpha)n+(2-2\alpha)n-2n=3n
\end{align*}
a contradiction.
\end{proofclaim}

\noindent
\textbf{Subcase 2.2(a)} ($(1-2\alpha)n+\frac{6}{\ep} < s_2 \leq n+\frac{6}{\ep}$)

By the case we have $(\frac{1}{2}+\alpha)n \leq s_1 \leq (1+\ep)n$ and $(1-2\alpha)n < s_2 \leq (1+\ep)n$, with the additional constraint $\alpha < \frac{1}{6}-\ep$. 

Let $f(s_1, s_2) = s_1s_2 - s_1(s_1-(\frac{1}{2}+\alpha)n) - s_2(2\alpha n)$. We first show that $f(s_1, s_2) > 0$ on the domain $\left[(\frac{1}{2}+\alpha)n, (1+\ep)n\right]\times \left[(1-2\alpha)n, (1+\ep)n\right]$.

Note that 
\begin{align*}
f(s_1, s_2) &= s_2(s_1 - 2\alpha n) - s_1(s_1-(\frac{1}{2}+\alpha)n)\\
&> (1-2\alpha)n(s_1 - 2\alpha n) - s_1(s_1-(\frac{1}{2}+\alpha)n) =: g(s_1)
\end{align*}
where the strict inequality holds since $s_1 - 2\alpha n \geq (\frac{1}{2}-\alpha)n > 0$ and $s_2 > (1-2\alpha)n$.  Since $g(s_1)$ is a concave quadratic in $s_1$, its minimum on the interval $\left[(\frac{1}{2}+\alpha)n, (1+\ep)n\right]$ must occur at one of the endpoints.  Evaluating $g$ at the endpoints gives
\begin{align*}
g((\frac{1}{2}+\alpha)n) &= (1-2\alpha)n(\frac{1}{2}-\alpha)n = 2(\frac{1}{2}-\alpha)^2 n^2 > 0 \\[2ex]
g((1+\ep)n) &= (1-2\alpha)n(1+\ep-2\alpha)n - (1+\ep)n(\frac{1}{2}-\alpha+\ep)n \\
&= ( 3(\frac{1}{6} - \alpha - \ep) + 4\alpha^2 + \ep(\frac{5}{2} - \alpha - \ep) )n^2 > 0
\end{align*}
and thus $f(s_1, s_2) > g(s_1) \geq 0$ on the entire domain.

Note that $2\alpha n\stackrel{\eqref{eq:s+t'}}{>}2n-(s_2+t_2)+\frac{6}{\ep}$ and since $s_2>(1-2\alpha)n+\frac{6}{\ep}$, we have $2\alpha n>n-s_2+\frac{6}{\ep}$. So by Claim \ref{clm:t2}(i) or (ii) (depending on the size of $t_2$), we have 
$$s_1s_2 = \sum_{u\in S_1}d_2(u, S_2)+\sum_{v\in S_2}d_1(v, S_1) < s_1(s_1-(\frac{1}{2}+\alpha)n)+s_2\cdot 2\alpha n$$
which contradicts $f(s_1, s_2) > 0$.

\noindent
\textbf{Subcase 2.2(b)} ($(\frac{1}{2}-\alpha+\frac{\ep}{2})n < s_2 \leq (1-2\alpha)n+\frac{6}{\ep}$)

First note that by the case we have $(\frac{5}{6}+\alpha)n-m \leq s_1 \leq (1+\frac{\ep}{2})n-m$ and $(\frac{1}{2}-\alpha)n \leq s_2 \leq (1-2\alpha+\ep)n$.  Also note that since $s_1>(\frac{1}{2}+\alpha)n$, we have $m\leq (1+\frac{\ep}{2})n-s_1<(\frac{1}{2}-\alpha)n$.

Define $f(s_1,s_2)=s_1s_2 - s_1(n-s_1-m)-s_2(n-s_2)$ and $g(s_1, s_2)=s_1s_2 - s_1(s_1-(\frac{1}{2}+\alpha)n)-s_2(s_2-(\frac{1}{2}-3\alpha)n)$.  We first show that for all $(s_1, s_2)$ in the domain implicitly defined by this case, we have $f(s_1,s_2)+g(s_1,s_2)>0$ which implies that either $f(s_1, s_2)>0$ or $g(s_1, s_2)>0$.

Indeed, we have 
\begin{align*}
f(s_1, s_2)+g(s_1, s_2)&=2s_1s_2 - s_1((\frac{1}{2}-\alpha)n-m)  - s_2(\frac{1}{2}+3\alpha)n\\
&=s_1(s_2 - (\frac{1}{2}-\alpha)n+m) + s_2(s_1-(\frac{1}{2}+3\alpha)n),
\end{align*}
so if $s_1\geq (\frac{1}{2}+3\alpha)n$, then we are done.  Otherwise we have $\max\{(\frac{1}{2}+\alpha)n, (\frac{5}{6}+\alpha)n-m\}\leq s_1<(\frac{1}{2}+3\alpha)n$ which in particular implies that $m>(\frac{1}{3}-2\alpha)n$.  Thus, continuing from above and using the lower bounds on $m$, $s_1$, and $s_2$, we have
\begin{align*}
f(s_1, s_2)+g(s_1, s_2)&=s_1(s_2 - (\frac{1}{2}-\alpha)n+m) + s_2(s_1-(\frac{1}{2}+3\alpha)n)\\
&>(\frac{1}{2}+\alpha)n(s_2-(\frac{1}{6}-\alpha)n)-s_2\cdot 2\alpha n\\
&=s_2(\frac{1}{2}-\alpha)n-(\frac{1}{2}+\alpha)n(\frac{1}{6}-\alpha)n\\
&>(\frac{1}{2}-\alpha)n(\frac{1}{2}-\alpha)n-(\frac{1}{2}+\alpha)n(\frac{1}{6}-\alpha)n\\
&=(2(\frac{1}{6}-\alpha)^2+\frac{1}{9})n^2>0
\end{align*}
where the inequality holds because

Note that by the upper bound on $s_2$ we have $t_2 \stackrel{\eqref{eq:2st}}{>} (3-2\alpha+\frac{3}{2}\ep)-2s_2 > (1+2\alpha+\ep)n$ and thus by Claim \ref{clm:t2}(i) every vertex in $S_2$ has blue degree at most $n-s_2+\frac{12}{\ep}$ to $S_1$.
So by Claim \ref{clm:deg}(i) and (ii) and Claim \ref{clm:t2}(i) and (iii), we have 
$$s_1s_2 = \sum_{u\in S_1}d_2(u, S_2)+\sum_{v\in S_2}d_1(v, S_1) < s_1(s_1-(\frac{1}{2}+\alpha)n)+s_2(s_2-(\frac{1}{2}-3\alpha)n)$$ and
$$s_1s_2 = \sum_{u\in S_1}d_2(u, S_2)+\sum_{v\in S_2}d_1(v, S_1) < s_1(n-s_1-m)+s_2(n-s_2)$$ 
which contradicts the fact that $f(s_1,s_2)>0$ or $g(s_1, s_2)>0$.
\end{proof}

\section{Conclusion and open problems}\label{sec:conclusion}

Of course the main open problem is to further improve the bounds on $R(F_n)$.  Improving the lower bound on $R(F_n)$ by a significant amount would require proving that $R(F_n)> R(K_{1,2n}, F_n)$; i.e.~there is a lower bound on $R(F_n)$ which makes use of the fact that there is no $F_n$ in either color.  On the other hand, perhaps $R(F_n)$ is (asymptotically) equal to $R(K_{1,2n}, F_n)=(3+\sqrt{3})n+\Theta(1)$?

There is at least one piece of evidence to suggest that $R(K_{1,2n}, F_n)=R(F_n)$ is possible.  When $m=\Omega(n^2)$, Zhang, Broersma, and Chen proved that $R(K_{1,2m}, F_n)=R(F_m, F_n)=4m+1$.  Indeed, in \cite{ZBC1}, they proved that for $m\geq \max\{n^2-\frac{n}{2}, \frac{11n}{2}-4\}$, $R(F_m, F_n)=4m+1$, and in \cite{ZBC3} they proved that for $m\geq \max\{n(n-1), 6n-7\}$, $R(K_{1,2m}, F_n)=4m+1$.  

It would be interesting to see if $R(K_{1,2m}, F_n)= R(F_m, F_n)$ for other values of $m=o(n^2)$.  In general, for the off diagonal case, $R(F_m, F_n)$, Lin and Li proved the following upper bound.

\begin{theorem}[Lin, Li]\label{thm:LL}
For $m\geq n\geq 2$, $R(F_m, F_n)\leq 4m+2n$.
\end{theorem}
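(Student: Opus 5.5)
The plan is to reduce the statement to two ``matching versus fan'' Ramsey bounds, exactly in the spirit of how Corollary \ref{cor:LL} comes from $R(nK_2,F_n)=3n$. Let $N=4m+2n$ and fix a 2-coloring of $K_N$. For any vertex $v$ we have $d_R(v)+d_B(v)=4m+2n-1$, so either $d_R(v)\geq 2m+n$ or $d_B(v)\geq 2m+n$.

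If $d_R(v)\geq 2m+n$, it suffices to know that every 2-coloring of $K_{2m+n}$ contains a red $mK_2$ or a blue $F_n$. A red matching of size $m$ in $N_R(v)$ together with $v$ is a red $F_m$. So the first ingredient is
\[
R(mK_2,F_n)\leq 2m+n \qquad (m\geq n).
\]
Symmetrically, if $d_B(v)\geq 2m+n$, it suffices to have
\[
R(nK_2,F_m)\leq 2m+n \qquad (m\geq n).
\]
A blue matching of size $n$ in $N_B(v)$ together with $v$ gives a blue $F_n$. Hence the theorem follows once both bounds are proved, and for $m=n$ both specialize to the Lin--Li bound $3n$.

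To prove each bound I would use the Edmonds--Gallai structure, as in Lemma \ref{lem:structure}. Take a 2-colored $K_M$ with $M=2m+n$ and suppose the first color has no matching of size $k$, where $k=m$ or $k=n$. Theorem \ref{thm:GE} gives a set $A$ with $|A|\leq k-1$ and with $p\geq |A|+M-2k+2$ odd components $D_1,\dots,D_p$ plus the even part $C$. All edges between distinct components, and from components to $C$, are in the second color. So the second color contains a complete multipartite graph $H$ on $X=C\cup D_1\cup\dots\cup D_p$, where $|X|=M-|A|\geq M-k+1$ and, by \eqref{eq:excess}, all but at most $2(k-1-|A|)$ vertices of $X$ lie in singleton components.

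Pick a center $u$ in a singleton component. Its second-color neighbourhood contains $X\setminus\{u\}$, which spans a complete multipartite graph with many parts, mostly singletons. I would then apply Lemma \ref{lem:partite_matching}(i) or (ii) to find a matching covering $2\lfloor (|X|-1)/2\rfloor$ or $2(|X|-1-|V_t|)$ vertices, where $V_t$ is the largest part. This needs to reach $2n$ in the first bound and $2m$ in the second.

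The main obstacle is that first step of the second bound, $R(nK_2,F_m)\leq 2m+n$. There $k=n$, so $|X|\geq 2m+1$. But the largest component could be big, up to about $2(n-1-|A|)+1$ vertices, which would spoil the matching count from Lemma \ref{lem:partite_matching}(ii). I would handle this by splitting on $|A|$. If $|A|$ is small, then $|X|$ is large: $|X|-1-|V_t|\geq 2m+n-|A|-1-(2n-2|A|-1)=2m-n+|A|$. Combining this with the mostly-singleton structure should still give $m$ edges once $n\leq m$ is used. If $|A|$ is large, the large component is forced to be small, and the first bound is easy. Finally, vertices of $A$ may also supply extra second-color edges. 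If the direct count fails by a few vertices, I would additionally route some matching edges from $N(u)\cap A$ into $X$, in the manner of the fan extension Lemma \ref{lem:main}. I expect this to be the delicate part of the argument.
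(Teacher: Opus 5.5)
The paper only quotes this result from Lin and Li, so there is no in-paper proof to compare against. Your reduction $R(F_m,F_n)\le R(mK_2,F_n)+R(nK_2,F_m)$ is correct; it is the off-diagonal form of the $R(F_n)\le 2R(nK_2,F_n)$ argument mentioned in the introduction. The Edmonds--Gallai approach does prove both auxiliary bounds, but you have misplaced the difficulty. Your own inequality for the second bound, $|X|-1-|V_t|\ge 2m-n+|A|$, is already $\ge m$ because $m\ge n$. Singletons do exist there: at most $n-1-|A|$ odd components are non-singletons, so at least $2m-2n+3+2|A|\ge 3$ are singletons. Together with $|X|-1\ge 2m$, Lemma \ref{lem:partite_matching}(i)/(ii) finishes at once. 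No split on $|A|$ and no routing through $A$ is needed.

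The step that actually fails is in the first bound ($k=m$): ``pick a center $u$ in a singleton component.'' Your claim that all but $2(k-1-|A|)$ vertices of $X$ lie in singletons is false. An odd component of order $s\ge 3$ contributes only $s-1\ge\frac23 s$ to the excess, so non-singleton components can hold up to $3(k-1-|A|)$ vertices. For $m\ge n+3$ there may be no singleton at all. For example, take $n=2$, $m=5$, $M=12$, and let the first color be four disjoint triangles. Its maximum matching has size $4<m$, and the Edmonds--Gallai partition is $A=\emptyset$ with four components of order $3$.

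The repair is as follows. Put $u$ in a smallest part $P$ and let $V_t$ be the largest remaining part.
\begin{itemize}
\item From $(|P|-1)+(|V_t|-1)\le 2(k-1-|A|)$ you get $|X|-|P|-|V_t|\ge M-2k+|A|$. This equals $n+|A|$ for $k=m$ and $2m-n+|A|\ge m$ for $k=n$.
\item If $|P|=1$, then $|X|-|P|\ge M-k$. If $|P|\ge 2$, every part has order at least $2$, so $|X|-|P|\ge 2(p-1)\ge 2(M-2k+1+|A|)$. In both cases this is at least $2n$ (resp.\ $2m$).
\item $X\setminus P$ has at least $p-1\ge n+1\ge 3$ parts.
\end{itemize}
So Lemma \ref{lem:partite_matching}(i)/(ii) gives the required matching inside the second-color neighbourhood of $u$, which contains $X\setminus P$. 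With this repair and these inequalities written out, your plan is a complete proof.
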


Note that in the case when $n\leq m<n^2-\frac{n}{2}$, our lower bound from Theorem \ref{thm:fanstar} applies and we have
$$3m+\sqrt{m^2+2n^2}-4\leq R(K_{1,2m}, F_n)\leq R(F_m, F_n)\leq 4m+2n.$$

Our proof technique for Theorem \ref{thm:main} can almost certainly be generalized to improve Theorem \ref{thm:LL}, but it is unclear what type of bound would result.  We do not pursue this here, as introducing another parameter would certainly increase the number of cases (of which there are already plenty) and thus obscure the main ideas of the paper. 

\bigskip

\noindent
\tbf{Acknowledgements and AI tool disclosure:} Thank you to J\'anos Bar\'at and Ryan Martin for the discussions which led to Problem \ref{prob:turan}.  Thank you to Deepak Bal for a helpful conversation which inspired us to improve the presentation of a number of inequalities in the paper.

This paper was entirely human-written.  Before submitting the paper, we used Gemini to essentially act as a referee.  This process highlighted a number of typos and omitted details which we were then able to address before submission.  The humans who wrote this paper take full responsibility for any remaining errors.

\bibliographystyle{abbrv}
\bibliography{references}

\newpage

\section{Appendix: A lower bound on the Ramsey number of $K_{1,2n}$ vs.~$F_n$}

\begin{example}[Specific example]
For all integers $n\geq 2$,
$$
R(K_{1,2n}, F_n)\geq 2\floor{\sqrt{3}n}+2\floor{\frac{3-\sqrt{3}}{2}n}-4>(3+\sqrt{3})n-8.
$$
\end{example}

\begin{proof}
Let $n$ be a positive integer.  Take disjoint sets $X_1, X_2, Y_1, Y_2$ such that $|X_1|=|X_2|=a:=\floor{\sqrt{3}n}-1$ and $|Y_1|=|Y_2|=b=\floor{\frac{3-\sqrt{3}}{2}n}-1$ and set $N=2a+2b$.  

Now add all red edges between $X_1$ and $X_2$, between $X_1$ and $Y_2$, and between $X_2$ and $Y_1$.   For all $i\in [2]$, between $X_i$ and $Y_i$ we put red edges so that the induced bipartite graph between $X_i$ and $Y_i$ has the property that every vertex in $X_i$ has red degree between $n-4-b$ and $n-1-b$ to $Y_i$, and every vertex in $Y_i$ has red degree between $n-4$ and $n-1$ to $X_i$. 

\begin{figure}[ht]
    \centering
    \includegraphics[scale=1]{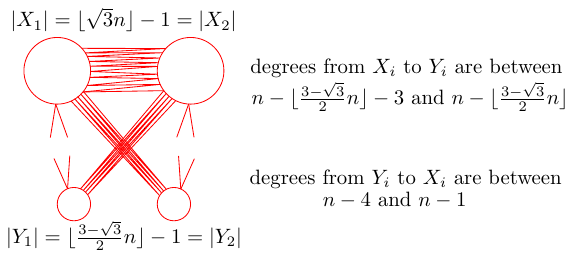}
    \caption{A graph on $N$ vertices with minimum degree at least $N-2n$ having no copy of $F_n$.} 
    \label{fig:example_special}
\end{figure}

Since $\sqrt{3}$ is irrational, we have $a+2b<\sqrt{3}n-1+(3-\sqrt{3})n-2=3n-3$ and since $a+2b$ is an integer we have $a+2b\leq 3n-4$.  This implies that the minimum degree of the red graph is at least $$a+n-4\geq 2a+2b-2n=N-2n$$ and thus there is no blue copy of $K_{1,2n}$.  Also note that the red graph is tripartite with parts $X_1, X_2, Y_1\cup Y_2$, and every vertex has degree at most $n-1$ to one of the other parts, so there is no red copy of $F_n$.  

All that remains is to show that such a bipartite graph exists between $X_i$ and $Y_i$.  By Lemma \ref{lem:GR} (with $c=n-1-b$, $d=n-1$, and $\sigma=3$), this amounts to checking that $$-3 b\leq a(n-1-b)-b(n-1)\leq 3 a.$$
Indeed, we have 
\begin{align*}
3a-(a(n-1-b)-b(n-1))&= b(n-1)-a(n-4-b)\\
&\geq (\frac{3-\sqrt{3}}{2}n-2)(n-1)-(\sqrt{3}n-1)(n-4-(\frac{3-\sqrt{3}}{2}n-2))\\
&=(3\sqrt{3}-4)n>0.
\end{align*}
Also we have 
\begin{align*}
3 b-(b(n-1)-a(n-1-b))&= a(n-1-b)-b(n-4)\\
&\geq (\sqrt{3}n-2)(n-1-(\frac{3-\sqrt{3}}{2}n-1))-(\frac{3-\sqrt{3}}{2}n-1)(n-4)\\
&=(8-3\sqrt{3})n-4 > 0 
\end{align*}
where the last inequality holds since $n\geq 2$.  
\end{proof}

\end{document}